\documentclass[reqno]{amsart}
\usepackage[utf8]{inputenc}
\usepackage{amsmath,calligra,mathrsfs}
\usepackage[margin=2.5cm]{geometry}
\usepackage[english]{babel}
\usepackage{csquotes}
\usepackage[utf8]{inputenc} 
\usepackage[T1]{fontenc} 
\usepackage{amssymb,amsthm,mathtools,dsfont}
\usepackage{hyperref}
\usepackage{xcolor}
\hypersetup{
    colorlinks = true,
    linkcolor={red!80},
    citecolor={green!60!black},
    urlcolor={blue!80!black},
    pdfauthor=author
}
\usepackage[final]{microtype}
\usepackage{comment,mdframed,multicol}
\usepackage{array}
\usepackage{stmaryrd}
\usepackage{caption}
\usepackage{subcaption}
\usepackage[pdftex]{graphicx}
\usepackage{tipa}
\usepackage{adjustbox}
\usepackage{mathdots}
\usepackage{leftindex}
\usepackage{braket}
\usepackage[shortlabels]{enumitem}
\numberwithin{equation}{section}
\everymath{\displaystyle}

\setlength{\parindent}{0.5cm}
\setlength{\parskip}{0pt}

\theoremstyle{definition}
\newtheorem{defn}{Definition}[section]

\theoremstyle{plain}
\newtheorem{thm}[defn]{Theorem}
\newtheorem{lem}[defn]{Lemma}
\newtheorem{cor}[defn]{Corollary}
\newtheorem{prop}[defn]{Proposition}
\newtheorem{defprop}[defn]{Definition/Proposition}
\newtheorem{conj}[defn]{Conjecture}
\newtheorem*{conj*}{Conjecture}
\newtheorem{thmx}{Theorem}

\theoremstyle{remark}
\newtheorem{rmk}[defn]{Remark}

\newcommand{\Z}{\mathbb Z}
\newcommand{\Q}{\mathbb Q}
\newcommand{\R}{\mathbb R}
\newcommand{\C}{\mathbb C}
\newcommand{\K}{\mathbb K}
\newcommand{\sym}{\mathfrak S}
\newcommand{\G}{\mathds G}
\newcommand{\Ox}{\mathcal O}
\renewcommand{\P}{\mathbb P}
\def\GL{\mathop{\rm GL}\nolimits}
\def\PGL{\mathop{\rm PGL}\nolimits}
\def\Hilb{\mathop{\rm Hilb}\nolimits}


\def\cC{{\mathcal C}}
\def\cD{{\mathcal D}}

\def\cF{{\mathcal F}}

\def\cP{{\mathcal P}}

\def\cT{{\mathcal T}}

\def\Eff{\mathop{\rm Eff}\nolimits}
\def\Amp{\mathop{\rm Amp}\nolimits}
\def\Big{\mathop{\rm Big}\nolimits}
\def\Nef{\mathop{\rm Nef}\nolimits}
\def\Mov{\mathop{\rm Mov}\nolimits}
\def\Bir{\mathop{\rm Bir}\nolimits}
\def\bBir{\mathop{\rm \textbf{Bir}}\nolimits}
\def\Pic{\mathop{\rm Pic}\nolimits}
\def\Pos{\mathop{\rm Pos}\nolimits}
\def\BAmp{\mathop{\rm BAmp}\nolimits}
\def\bPic{\mathop{\rm \textbf{Pic}}\nolimits}
\def\Gal{\mathop{\rm Gal}\nolimits}
\def\Brauer{\mathop{\rm Br}\nolimits}

\def\clMov{\mathop{\overline{\rm Mov}}\nolimits}
\def\clPos{\mathop{\overline{\rm Pos}}\nolimits}

\def\Aut{\mathop{\rm Aut}\nolimits}
\def\Spec{\mathop{\rm Spec}\nolimits}
\def\reg{\mathop{\rm reg}\nolimits}
\def\etale{\mathop{\rm \acute{e}t}\nolimits}
\def\tf{\mathop{\rm tf}\nolimits}

\def\monlthdg{\mathop{\rm Mon}^{\mathop{\rm 2,lt}}_{\mathop{\rm Hdg}}\nolimits}
\def\monltbir{\mathop{\rm Mon}^{\mathop{\rm 2,lt}}_{\mathop{\rm Bir}}\nolimits}
\def\psaut{\mathop{\rm PsAut}\nolimits}
\def\bsloc{\mathop{\rm Bs}\nolimits}
\def\Supp{\mathop{\rm Supp}\nolimits}

\newcommand{\id}{\text{id}}

\newcommand{\coker}{\text{coker} ~}

\newcommand{\codim}{\operatorname{codim}}

\newcommand{\abs}[1]{\left\lvert#1\right\rvert}
\newcommand{\enstq}[2]{\left\{#1~\middle|~#2\right\}}

\usepackage[all]{xy} 
\usepackage{tikz-cd}

\usepackage[
maxbibnames=99,
maxalphanames=4,
backend=biber,
sorting=nyt,
giveninits=true,
useprefix=true
]{biblatex}

\newcommand{\Addresses}{{
  \bigskip
  \footnotesize

  ~~\textsc{Institut Élie Cartan de Lorraine, Université de Lorraine et CNRS, F-54000 Nancy, France.}\par\nopagebreak
  ~~~~\textit{E-mail address}: \texttt{aurelien.faucher@univ-lorraine.fr}

}}

\addbibresource{biblio.bib}

\title[The cone conjecture for primitive symplectic varieties over a field of characteristic 0]{The cone conjecture for primitive symplectic varieties over a field of characteristic zero and an application}
\author{Aurélien Faucher}
\date{\today}

\begin{document}

\begin{abstract}
We prove the Kawamata-Morrison cone conjecture for $\Q$-factorial terminal projective primitive symplectic varieties with second Betti number greater than five defined over a field of characteristic $0$. As an application, we prove that the relative movable and the relative nef cone conjectures hold for fibrations whose very general fibre is a projective primitive symplectic varieties under certain assumptions.
\end{abstract}

\maketitle

\section{Introduction}
For a smooth projective variety $X$ over a field $F$, it is known thanks to the famous Cone Theorem that the $K_X$-negative part of the cone of curves of $X$ has a rather simple description: it is locally rational polyhedral, and when the characteristic of $F$ is equal to $0$, the extremal $K_X$-negative rays are associated with contraction morphisms \cite[Theorem 3.7]{Kollár_Mori_1998}. Nevertheless, the form of the $K_X$-positive part is generally not as predictable. For example, an abelian surface with Picard number at least $3$ has a round nef cone, or a K$3$ surface may well possess an infinite number of $(-2)$-curves (see for example \cite{totaro2012surface}). All hope is not lost regarding the study of this last part: the Kawamata-Morrison cone conjecture establishes the existence of a rational polyhedral fundamental domain under the action of certain groups of transformations. The statement of this conjecture \cite{morrison96beyond} has been greatly generalised over time, see for example \cite[Conjecture 1.12]{kawamata97cone} and \cite[Conjecture 1.1]{totaro10conepairs}. In this paper, we will state a version without boundaries. To do this, we introduce a general definition (the various notations used will be clearly defined in Section \ref{sect_relative_prelim}).

\begin{defn}\label{def_ktrivfibrespace}
A $K$-trivial fiber space is a normal $\Q$-factorial klt pair $(X,\Delta)$ endowed with a fibration $f : X \to S$ (i.e. a proper surjective morphism with connected fibres) over a quasi-projective variety $S$ such that $K_X + \Delta \equiv_f 0$. 
\end{defn}

\begin{conj}\label{conj_relKM}
Let $f : X \to S$ be a $K$-trivial fiber space.

\begin{enumerate}
\item There exists a rational polyhedral cone $\Pi$ which is a fundamental domain under the action of the group of relative pseudoautomorphisms $\psaut(X/S)$ on $\clMov^e(X/S) := \clMov(X/S) \cap \Eff(X/S)$ in the following sense:
\begin{itemize}
    \item $\clMov^e(X/S) = \cup_{g \in \psaut(X/S)} g^*\Pi$,
    \item $\forall g \in \psaut(X/S): \Pi^\circ \cap (g^*\Pi)^\circ = \emptyset$, except for $g^* = 1$ seen as an element of $\GL(N^1(X/S))$.
\end{itemize}
\item There exists a rational polyhedral cone $\Pi'$ which is a fundamental domain under the action of the group of relative automorphisms $\Aut(X/S)$ on $\Nef^e(X/S) := \Nef(X/S) \cap \Eff(X/S)$ in the above sense.
\end{enumerate}
\end{conj}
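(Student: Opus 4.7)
My plan is to reduce the relative statement to the absolute one (the case $S = \Spec F$) applied to the very general fibre $X_\eta$ of $f$, in the spirit of Kawamata's original strategy and its execution for K3 fibrations by Lazi\'c--Oguiso--Peternell. The absolute case I would treat in the category relevant to the paper, namely $\Q$-factorial terminal projective primitive symplectic varieties with $b_2 > 5$, using the Hodge-theoretic approach of Markman and Amerik--Verbitsky.

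\textbf{Absolute case.} For such a variety $Y$, the key inputs are: a Beauville--Bogomolov--Fujiki quadratic form on $H^2(Y,\Z)$ of signature $(3, b_2 - 3)$; a weak Hodge-theoretic Torelli theorem for the appropriate locally trivial deformation class; and a description of the walls of $\clMov(Y)$ and $\Nef(Y)$ inside the positive cone $\Pos(Y)$ in terms of classes of negative BBF-square (MBM or wall divisors). These identify, up to finite index, the image of $\psaut(Y)$ in $\GL(N^1(Y))$ with an arithmetic subgroup of the orthogonal group of the BBF form restricted to $N^1(Y)$, preserving $\Pos(Y)$ together with the chamber decomposition cut out by the wall divisors. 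An application of Looijenga's theorem on rational polyhedral fundamental domains for arithmetic groups acting on cones of hyperbolic signature then yields the desired $\Pi \subset \clMov^e(Y)$; part (2) follows analogously using the face-defining classes of $\Nef(Y)$.

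\textbf{From absolute to relative.} Restriction to $X_\eta$ produces a linear map $\rho : N^1(X/S) \to N^1(X_\eta)$ and compatible homomorphisms $\psaut(X/S) \to \psaut(X_\eta)$, $\Aut(X/S) \to \Aut(X_\eta)$. Using $K_X + \Delta \equiv_f 0$ and standard specialisation arguments, one checks that $\rho$ identifies $\clMov^e(X/S)$ with the intersection of $\clMov^e(X_\eta)$ and the monodromy-invariant image of $\rho$ (and similarly for $\Nef^e$). Intersecting the absolute fundamental domain with this rational subspace and pulling back yields the relative fundamental domain; rationality and polyhedrality survive the intersection since the subspace is defined over $\Q$.

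\textbf{Main obstacle.} The hardest step, in my view, is extending Markman's monodromy and wall-divisor theory to singular $\Q$-factorial terminal primitive symplectic varieties: one needs a Torelli-type theorem in this singular setting (as developed by Bakker--Lehn and by Menet), a parallel-transport picture controlling the birational self-maps, and a classification of the classes defining the walls of $\clMov(Y)$. The $b_2 > 5$ hypothesis is essential to ensure that the assumptions of the generic Torelli theorem and the polyhedrality statement in Looijenga's theorem are met, while the characteristic-$0$ hypothesis enters through the transcendental Hodge-theoretic arguments. The relative-to-absolute comparison presents a secondary, mostly formal difficulty: one must verify that the finite-index phenomena in the image of $\psaut(X/S)$ inside $\psaut(X_\eta)$, together with the possible non-surjectivity of $\rho$, do not spoil the polyhedrality or the fundamental-domain property.
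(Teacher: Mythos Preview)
The statement you are addressing is a \emph{conjecture}, not a theorem: the paper does not prove Conjecture~\ref{conj_relKM} in the stated generality (arbitrary $K$-trivial fiber spaces), nor does it claim to. What the paper actually establishes are two special cases under substantial extra hypotheses: Theorem~\ref{thm_conjcar0} (the absolute case for $\Q$-factorial terminal primitive symplectic varieties with $b_2 \geq 5$ over a characteristic-$0$ field, and for the cones $\clMov^+$, $\Nef^+$ rather than $\clMov^e$, $\Nef^e$) and Theorem~\ref{thm_relconjpsv} (the relative case for fibrations whose very general fibre is such a variety, assuming moreover that good minimal models exist for effective klt pairs on the very general fibre). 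Your proposal silently replaces the conjecture by these special cases; as a proof of the conjecture itself it is therefore a non-starter.

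Taken instead as a sketch of the paper's strategy for Theorems~\ref{thm_conjcar0} and~\ref{thm_relconjpsv}, your absolute outline is broadly right in spirit, but your relative reduction differs from the paper in a way that matters. You propose restricting to a very general closed fibre and then intersecting the absolute fundamental domain with a rational subspace; the paper instead restricts to the \emph{generic} fibre $X_\eta$ over the function field $\C(S)$ (Lemma~\ref{lem_genfibrepsv}) and applies the absolute cone conjecture over that non-algebraically-closed field. This is the whole reason Theorem~\ref{thm_conjcar0} is proved over an arbitrary field of characteristic $0$, and why a large portion of the paper is devoted to Galois-descent arguments for the reflection group $R_X$ and the Coxeter-theoretic analysis of Proposition~\ref{prop_tak413_sing}. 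The passage from $X_\eta$ to $X/S$ is then handled not by a naive subspace-intersection but by invoking the machinery of \cite{li2023relative} (the weak cone conjecture for $\clMov^+(X_\eta)$ implies that for $\clMov^+(X/S)$) together with \cite[Lemma 2.18]{horing2024relative}. Finally, the hypothesis on good minimal models, which you do not mention, is essential: it is what gives $\clMov^+ = \clMov^e$ via Proposition~\ref{prop_nefmovrel} and Lemma~\ref{lem_nefsemiample}, without which the fundamental domain you produce lives on the wrong cone.
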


\begin{defn}
	With the notations introduced above, item $(1)$ and item $(2)$ will be, respectively, referred to as \emph{the relative movable cone conjecture} and \emph{the relative nef cone conjecture} for $X$ over $S$. When $S$ is the spectrum of a field, thoses items will simply be referred to as the \emph{absolute movable cone conjecture} and the \emph{the absolute nef cone conjecture}.
\end{defn}

Beyond its intrinsic interest, the Kawamata–Morrison cone conjecture has important implications for the birational geometry of varieties with trivial canonical divisor, see \cite[Theorem 1.5]{gachet2024effective}. In the relative setting, we can mention the work of Y. Kawamata in dimension three \cite{kawamata97cone}, as well as the recent works \cite{li2025relative}, \cite{li2023relative} and \cite{moraga2024geometric}, which establish weak versions of the relative cone conjecture for fibrations in surface.

The absolute cone conjecture has been verified in a number of cases, but remains open in dimension at least three. For instance, the case of smooth surfaces is covered by the work of H. Sterk \cite{sterk1985finiteness} (inspired by ideas of E. Looijenga), Y. Namikawa \cite{Namikawa1985} and Y. Kawamata \cite{kawamata97cone}. The conjecture was proved for klt surface pairs by B. Totaro in \cite{totaro10conepairs}. A.Prendergast-Smith established the conjecture for abelian varieties over an algebraically closed field in \cite{Prendergast-Smith2012}. Thanks to the work of numerous geometers, this conjecture has been proven for irreducible holomorphic symplectic manifolds $X$ when the effective nef cone $\Nef^e(X)$ is replaced by the convex hull of $\Nef(X) \cap N^1(X)_\Q$ which is usually denoted by $\Nef^+(X)$, see for example \cite{amerik2017morrison}, \cite{amerik2020collections}, \cite{markman2011survey} and \cite{markman2015proof}. In general, Boucksom-Zariski divisorial decomposition allows us to show that $\Nef^e(X) \subset \Nef^+(X)$, but the reverse inclusion is more difficult to prove when we do not know whether integral nef isotropic classes (with respect to the Beauville-Bogomolov-Fujiki form) are effective. However, given that the SYZ conjecture for irreducible holomorphic symplectic manifolds \cite[Conjecture 4.1]{sawon2003abelian} has been proven for the four known families (\cite[Theorem 1.5]{bayer2014mmp}, \cite[Theorem 1.3]{markman2014lagrangian}, \cite[Proposition 3.38]{yoshioka16bridgeland}), \cite[Corollary 1.1]{matsushita17isotropic} \cite[Theorem 7.2, Corollary 7.3]{mongardi2021monodromy} and \cite[Theorem 2.2]{mongardi2022birational}) we have in these cases the equality $\Nef^e(X) = \Nef^+(X)$ and consequently, the Kawamata-Morrison conjectures are proven for these families. The relative movable cone conjecture has been proven in \cite{horing2024relative} for a fibration whose very general fibre is a projective irreducible holomorphic symplectic manifold, assuming that the total space is klt and assuming that every nef divisor on the very general fibre is semiample. 

\vspace{0.1cm}

The absolute cone conjecture has also been established for complex projective primitive symplectic varieties (see Definition \ref{def_PSV}), which are the singular analogues of complex projective irreducible holomorphic symplectic manifolds, under suitable assumptions in \cite[Theorem~1.2]{lehn2024morrison}. These varieties are equally interesting given that they form one of the three classes along with complex tori and singular Calabi-Yau varieties that can be used to express, up to a quasi-étale covering, any varieties with log-terminal singularities and trivial first Chern class, see for example \cite{druelguenancia2018decomposition}, \cite{druel2018decomposition}, \cite{greb2019klt}, \cite{horing2019algebraic} and \cite{bakker2022algebraic}. Such varieties may appear when considering certain moduli spaces; for example, if $S$ is a K3 surface or if $A$ is an abelian surface, for a fixed Mukai vector $v$ on $S$ or $A$, the spaces $M_v(S,H)$ and $K_v(A,H)$ are almost always primitive symplectic varieties with terminal singularities (see for instance \cite[Theorem 1.10]{perego2023irreducible}). We can also mention Nikulin orbifolds: starting from an irreducible holomorphic symplectic variety $X$ of type $K3^{[2]}$ with a symplectic involution $\iota \in \Aut(X)$, the quotient $\hat{X} = X / \iota$ is singular along 28 isolated points and a K3 surface denoted $\Sigma$. The blow-up of $\hat{X}$ along $\Sigma$ is then a primitive symplectic variety called a Nikulin orbifold, see \cite[Proposition 3.8]{menet2020global}. The first result of this paper is the proof of the absolute nef and movable cone conjectures for projective primitive symplectic varieties over a field of characteristic $0$.

\begin{thmx}\label{thm_conjcar0}
	Let $X$ be a projective primitive symplectic variety over a field $F$ of characteristic $0$ such that $b_2(X) \geq 5$ and $X_{\overline{F}}$ has $\Q$-factorial terminal singularities. 
	\begin{enumerate}
	    \item The absolute movable cone conjecture as stated in \ref{conj_relKM}, item $(1)$, holds for $X$ and the $\Bir(X)$-action on $\clMov^+(X)$. More precisely, there exists a rational polyhedral cone $\Pi$ which is a fundamental domain under the action of $\Bir(X)$ on $\clMov^+(X)$.
	    \item The absolute nef cone conjecture as stated in \ref{conj_relKM}, item $(2)$, holds for $X$ and the $\Aut(X)$-action on $\Nef^+(X)$. More precisely, there exists a rational polyhedral cone $\Pi'$ which is a fundamental domain under the action of $\Aut(X)$ on $\Nef^+(X)$.
	\end{enumerate}
\end{thmx}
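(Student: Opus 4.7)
The strategy is to reduce the statement to the complex case, where \cite[Theorem~1.2]{lehn2024morrison} applies, via two successive base-change arguments: a Lefschetz-principle step transferring the result from $\C$ to an arbitrary algebraically closed field $\overline{F}$ of characteristic zero, followed by a Galois descent from $\overline{F}$ down to $F$.

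For the Lefschetz step, the variety $X_{\overline{F}}$, together with any finite collection of witnesses one wishes to track (birational self-maps, automorphisms, rational extremal rays of the cones under study), is defined over a finitely generated subextension $k/\Q$ of $\overline{F}$, whose algebraic closure embeds into $\C$. One then verifies that under base change between algebraically closed fields of characteristic zero, all the relevant structures are preserved: $\Q$-factoriality and terminal singularities, the second Betti number (via $\ell$-adic comparison), the Néron--Severi lattice, the Beauville--Bogomolov--Fujiki form on $H^2$, the cones $\clMov^+$ and $\Nef^+$, and most delicately the groups $\Aut(X)$ and $\Bir(X)$. The latter follows from the representability of the automorphism functor by a group scheme locally of finite type combined with the rigidity of pseudoautomorphisms of primitive symplectic varieties, which forces $\Aut$ and $\Bir$ to be discrete and base-change invariant in our setting. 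Modulo these compatibilities, the conjecture over $\C$ yields directly the corresponding rational polyhedral fundamental domains for $X_{\overline{F}}$.

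For the Galois descent, the group $\Gamma := \Gal(\overline{F}/F)$ acts on $N^1(X_{\overline{F}})_\R$ preserving the lattice and the Beauville--Bogomolov--Fujiki form, and hence factors through a finite quotient by a Minkowski-type argument. The natural restriction maps identify $N^1(X)_\R$ with $\bigl(N^1(X_{\overline{F}})_\R\bigr)^\Gamma$, $\Bir(X)$ with $\Bir(X_{\overline{F}})^\Gamma$, $\Aut(X)$ with $\Aut(X_{\overline{F}})^\Gamma$, and the cones $\clMov^+(X)$ and $\Nef^+(X)$ with the $\Gamma$-fixed parts of $\clMov^+(X_{\overline{F}})$ and $\Nef^+(X_{\overline{F}})$, up to Brauer-type defects that are harmless after tensoring with $\R$. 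Starting from a rational polyhedral fundamental domain $\Pi$ for $\Bir(X_{\overline{F}})$ acting on $\clMov^+(X_{\overline{F}})$, one produces a $\Gamma$-stable refinement by combining the finiteness of the $\Gamma$-action on chambers with a Looijenga-type argument for cone actions admitting finitely many orbits of rational polyhedral chambers, then intersects with the $\Gamma$-fixed subspace to obtain a rational polyhedral fundamental domain for $\Bir(X)$ on $\clMov^+(X)$. The analogous argument for $\Aut(X)$ acting on $\Nef^+(X)$ proceeds identically.

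The principal obstacle is the Galois-descent step. A naive intersection of $\Pi$ with the $\Gamma$-fixed subspace fails because $\Pi$ need not be $\Gamma$-stable, and any averaging procedure destroys the fundamental-domain property. The crux is therefore to show that the rational polyhedral decomposition of $\clMov^+(X_{\overline{F}})$ induced by the $\Bir(X_{\overline{F}})$-translates of $\Pi$ can be refined $\Gamma$-equivariantly while remaining rational polyhedral and having the correct orbit structure under $\Bir(X_{\overline{F}})^\Gamma$. The Lefschetz step should be essentially formal once the base-change invariance of $\Bir$ and $\Aut$ has been verified, and the hypothesis $b_2 \geq 5$ is preserved since $b_2$ is base-change invariant in characteristic zero.
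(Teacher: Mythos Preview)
Your high-level strategy (Lefschetz principle for the algebraically closed case, then Galois descent) matches the paper's, and the Lefschetz step is indeed essentially formal once the base-change compatibilities for $\Bir$, $\Aut$, the Picard lattice, and the cones are verified. The gap is in the Galois descent, and you are right to flag it as the principal obstacle; however, your proposed resolution via a ``$\Gamma$-equivariant rational polyhedral refinement followed by intersection with the fixed subspace'' is not a proof. You do not explain why, after intersecting with $N^1(X)_\R$, the resulting pieces are swept out by the \emph{$\Gamma$-fixed} birational automorphisms rather than by the full group $\Bir(X_{\overline F})$. A $\Gamma$-stable chamber of the decomposition over $\overline F$ need not meet the fixed subspace at all, and when it does, the element of $\Bir(X_{\overline F})$ carrying one chamber to another need not be $\Gamma$-invariant. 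The ``Looijenga-type argument'' you invoke does not address this: Looijenga's framework produces fundamental domains for a given group acting on a given cone, but here you are trying to pass from a fundamental domain for one group on one cone to a fundamental domain for a subgroup on a sub-cone, and that passage is exactly the content you need to supply.

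The paper does not descend a fundamental domain. Instead it descends the \emph{arithmeticity} of the relevant group action and then constructs the fundamental domain directly over $F$ via a Dirichlet-domain argument \`a la Sterk. Concretely, the paper introduces the Galois-fixed part $R_X := W_{\overline X}^{G_F}$ of the Weyl group generated by reflections in prime exceptional divisors, shows by an explicit Coxeter-theoretic analysis (classifying Galois orbits into types $A_1^r$ and $A_2^r$) that $R_X$ preserves $\Pic(X)_{\tf}$ and that $\clMov(X)\cap\Pos(X)$ is a fundamental domain for $R_X$ on $\Pos(X)$, and then proves that the image of $\Bir(X)\ltimes R_X$ in $O(\Pic(X)_{\tf})$ has finite index (this last step uses the Bright--Logan--van Luijk machinery to pass from the finite-index statement over $\overline F$ to one over $F$). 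Arithmeticity then gives a rational polyhedral fundamental domain $\Pi$ for the image acting on $\clPos^+(X)$, and the Coxeter description of $R_X$ is used again to check $\Pi\subset\clMov^+(X)$ and to peel off the $R_X$-factor, leaving $\Pi$ as a fundamental domain for $\Bir(X)$ alone. Part~(2) is \emph{not} identical to part~(1): the paper subdivides $\Pi$ by the finitely many walls from a set $\Sigma$ of wall-divisor classes (whose BBF-squares are shown to be bounded below, again via Lefschetz) and matches the subcones to nef cones of birational models, before a second Dirichlet-domain construction yields the $\Aut(X)$-fundamental domain. Your sketch elides all of this structure.
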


During the preparation of this work, Theorem \ref{thm_conjcar0} was stated in \cite[Theorem 3.5]{fu2025finiteness}, where a proof is briefly sketched. The approach developed here was obtained independently and follows similar lines. We hope that the detailed arguments provided in this paper, as well as the application presented below, will be a useful addition to the literature.

After the completion of this work, C. Gachet informed us that Theorem \ref{thm_conjcar0} also follows from her results \cite[Theorem 1.6 and Lemma 2.24]{gachet2025well}, which apply to varieties with well-clipped movable cones, a notion that she introduced in \cite[Definition 3.2]{gachet2025well}. In \cite[Example 3.10.(6)]{gachet2025well}, she observes that primitive symplectic varieties with terminal $\Q$-factorial singularities satisfy this property. We are grateful to her for pointing this out. While Theorem \ref{thm_conjcar0} can thus be deduced from her general framework, the present paper develops a more specialised study of primitive symplectic varieties over arbitrary fields of characteristic zero. In particular, we establish base change and descent properties, construct an integral quadratic form on the Picard group analogous to the Beauville–Bogomolov–Fujiki form over the complex numbers, and give an explicit description of the wall-and-chamber decomposition of the movable cone via reflections.

Theorem \ref{thm_conjcar0} is a generalization to the singular case of \cite[Theorem 1.0.5]{takamatsu2025finiteness} and its proof follows the same approach that goes back to \cite{sterk1985finiteness}, \cite{markman2011survey} and \cite{bright2020finiteness}. This theorem can be used to prove the next result that is a natural generalization of \cite[Theorem 1.1]{horing2024relative} to the setting of singular fibres.

\begin{thmx}\label{thm_relconjpsv}
Let $f : X \to S$ be a projective fibration between quasi-projective $\Q$-factorial normal varieties such that $X$ is klt. Suppose that
\begin{enumerate}
\item the very general fibre $X_s$ of $f$ is a projective primitive symplectic variety with $b_2(X_s) \geq 5$ and $\Q$-factorial terminal singularities,
\item the good minimal models exist for effective klt pairs on the very general fibre of $f$.
\end{enumerate}
Then the relative movable cone conjecture holds for $f : X \to S$ and, up to isomorphism over $S$, there are finitely many small $\Q$-factorial modifications $X'$ of $X$ over $S$, and the relative nef cone conjecture holds for each of them.
\end{thmx}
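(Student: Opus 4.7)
The overall plan is to reduce the relative cone conjectures for $f\colon X\to S$ to the absolute ones on the very general fibre $F$ of $f$, where Theorem \ref{thm_conjcar0} already applies. The key tool is the restriction map
\[
\rho\colon N^1(X/S)_\R \longrightarrow N^1(F)_\R,
\]
which is injective because $F$ is very general. I would first check that $\rho$ sends $\clMov(X/S)$ into $\clMov^+(F)$, $\Nef(X/S)$ into $\Nef^+(F)$ and $\Eff(X/S)$ into $\Eff(F)$, and that restriction to $F$ induces group homomorphisms $\psaut(X/S)\to\Bir(F)$ and $\Aut(X/S)\to\Aut(F)$ which are equivariant with respect to $\rho$. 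Both steps are of the same nature as those in \cite{horing2024relative}; the singular hypotheses enter only through the choice of cone on $F$ (the ``$+$'' version, which by Theorem \ref{thm_conjcar0} is the right one for primitive symplectic varieties).

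The main content lies in the reverse direction: one has to lift the chamber decomposition of $\clMov^+(F)$ into $\Bir(F)$-translates of nef cones of SQMs of $F$ to a chamber decomposition of the relative movable cone $\clMov^e(X/S)$. Given a movable $\Q$-Cartier divisor $D$ on $X$ whose restriction $D_F$ lies in the interior of an MMP chamber of $F$, I would run the $D$-MMP on $X$ relatively over $S$. Assumption (2) guarantees that on the very general fibre, this MMP terminates with a good minimal model; combined with the klt hypothesis on the total space $X$ and a spreading-out argument, it follows that the relative MMP also terminates, producing a small $\Q$-factorial modification $X\dashrightarrow X'$ over $S$ whose restriction to $F$ realizes the chosen birational model.

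With this lifting available, one picks a rational polyhedral fundamental domain $\Pi_F\subset\clMov^+(F)$ for $\Bir(F)$ provided by Theorem \ref{thm_conjcar0}, and produces a fundamental domain for $\psaut(X/S)\curvearrowright\clMov^e(X/S)$ by a Looijenga-type argument: intersect $\Pi_F$ with the subspace $\rho(N^1(X/S)_\R)$, restrict to the image of $\Eff(X/S)$, and pull back along $\rho$. Rationality and polyhedrality are preserved by these operations. Finiteness of SQMs of $X/S$ up to isomorphism over $S$ then follows because each SQM corresponds to a chamber in the (finite) nef subdivision of the fundamental domain; the uniqueness of the lift over $S$ uses that two small $\Q$-factorial modifications of $X/S$ which agree at the generic point of $S$ are isomorphic over $S$. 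For each such SQM $X'_i$ one concludes the relative nef cone conjecture by applying the nef part of Theorem \ref{thm_conjcar0} to the very general fibre of $X'_i\to S$ and pulling back along $\rho$ exactly as above.

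The main obstacle is the lifting step in the second paragraph: one has to guarantee that an MMP started on $X$ relative to $S$ with a class lying over an interior point of an MMP chamber of $F$ really does terminate, and that distinct MMP chambers on $F$ are realised by non-isomorphic modifications of $X/S$. This is precisely where assumption (2) and the klt hypothesis are essential — the former supplies semiampleness on the very general fibre, enabling relative termination by specialisation, and the latter ensures that the total space admits the required relative MMP. All the remaining ingredients are formal manipulations of cones and groups given Theorem \ref{thm_conjcar0}.
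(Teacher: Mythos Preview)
Your approach differs from the paper's in one crucial respect: you work on the very general fibre $F$ over $\C$, whereas the paper works on the \emph{generic} fibre $X_\eta$ over the function field $\C(S)$. This is not a cosmetic distinction. The restriction map $\rho\colon N^1(X/S)_\R \to N^1(F)_\R$ is injective but typically not surjective; after identifying $F \cong X_{\overline{\eta}}$ via a field isomorphism $\C \cong \overline{\C(S)}$, its image is the Galois-invariant sublattice $N^1(X_\eta)_\R \subset N^1(X_{\overline{\eta}})_\R$. Likewise, the image of $\psaut(X/S)$ in $\Bir(F)$ is the Galois-invariant subgroup $\Bir(X_\eta)$, not all of $\Bir(F)$.

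The gap is in your ``Looijenga-type argument''. Intersecting a fundamental domain $\Pi_F$ for the $\Bir(F)$-action with the subspace $\rho(N^1(X/S)_\R)$ does \emph{not}, in general, produce a fundamental domain for the subgroup $\psaut(X/S)$ acting on that subspace: the $\Bir(F)$-translates of $\Pi_F$ needed to cover $\rho(\clMov^e(X/S))$ need not come from $\psaut(X/S)$, and $\Pi_F$ may fail to meet the subspace at all or may meet it in several $\psaut(X/S)$-inequivalent pieces. What is actually required here is precisely the Galois-descent machinery that constitutes the proof of Theorem~\ref{thm_conjcar0} (the Coxeter-system analysis of Propositions~\ref{prop_tak413_sing} and~\ref{prop_tak404_sing}, and the finite-index statement of Proposition~\ref{prop_imagerho}). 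The paper's route is to apply Theorem~\ref{thm_conjcar0} directly to $X_\eta$ over $\C(S)$ --- this is the whole reason Theorem~\ref{thm_conjcar0} is proved over an arbitrary characteristic-zero field rather than just $\C$ --- obtaining a fundamental domain on the correct lattice for the correct group from the start; Lemma~\ref{lem_genfibrepsv} verifies the hypotheses, Proposition~\ref{prop_nefmovrel} matches the ``$+$'' and ``$e$'' cones, and the passage from $X_\eta$ to $X/S$ is then handled by \cite{li2023relative} and \cite{horing2024relative}. Your MMP-lifting sketch is in the spirit of \cite[Proposition~4.3]{horing2024relative}, but it does not by itself resolve the fundamental-domain issue above.
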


The proof of this theorem follows the same strategy as in \cite{horing2024relative}. The issue of the existence of both minimal models and good minimal models for primitive symplectic varieties is an active subject of research. The recent work by C. Onorati and \'A.D. R\'\i os Ortiz \cite[Theorem A]{onorati2025syz} provides a proof of the semiampleness of nef divisors on primitive symplectic varieties which are locally trivial deformations of $M_v(S,H)$, with $v = m \cdot w$ a non-primitive Mukai, under natural numerical hypothesis on $w$. Similarly, thanks to L. Buelli \cite[Theorem 5.2.1]{buelli2025locally}, the semiampleness of nef divisors is now known on primitive symplectic varieties which are locally trivial deformations of $K_v(A,H)$ under similar numerical assumptions. In particular, for fibrations whose very general fibre is a locally trivial deformation of these moduli spaces, our Theorem \ref{thm_relconjpsv} applies as soon as termination is known. By \cite[Theorem 1.3]{lehn2024footnotes}, any effective klt pair $(X,\Delta)$ with $X$ a projective primitive symplectic variety with terminal hyperquotient singularities and $\Delta$ an effective divisor has a minimal model. The termination is therefore known for Nikulin orbifolds \cite[Corollary 1.4]{lehn2024footnotes}, and G. Nanni proved in \cite{Nanni2025} the semiampleness of nef divisors on these primitive symplectic varieties, which provides an example to which our theorem \ref{thm_relconjpsv} applies.

\subsection*{Organization}
In Section \ref{section_psvdef}, we give the main definitions concerning primitive symplectic varieties over a field of characteristic 0. We prove some properties of base change with respect to field extensions for these varieties. In Section \ref{section_picardscheme}, we recall the definition and basic properties of the Picard scheme that we will need to apply the Lefschetz principle. This will enable us to equip the torsion-free part of the Picard group of a primitive symplectic variety defined over a field of characteristic zero with a lattice structure. In Section \ref{sect_absolute_prel}, we recall the definitions of the various cones with which we will be working and prove some preliminary results for the absolute conjectures. In Section \ref{section_movcone} and Section \ref{section_nefcone}, we prove Theorem \ref{thm_conjcar0} item (1) and Theorem \ref{thm_conjcar0} item (2) respectively. In Section \ref{sect_relative_prelim}, we recall some definitions from birational geometry and the Minimal Model Program that will be useful to us. Finally, in Section \ref{sect_relconeconj}, we prove Theorem \ref{thm_relconjpsv}.

\subsection*{Acknowledgements}
This paper is part of my doctoral work. I would like to thank my PhD advisor, Gianluca Pacienza, for his guidance and many helpful suggestions. I also thank Matei Toma and Benoît Cadorel for their help and for the many discussions that were of great benefit to me. Finally, I am grateful to Professors Fabio Bernasconi, Martin Bright, Christian Lehn, Adam Logan and Teppei Takamatsu for their kindness and for always taking the time to answer my questions by email.

\subsection*{Conventions}
All fields are assumed to be of characteristic zero, unless otherwise specified. For a field $F$, we denote by $\overline{F}$ an algebraic closure. A \emph{variety} is a geometrically integral, separated scheme of finite type over a field $F$. For any non-negative integer $i$ and for any prime number $\ell$, we denote by $b_i(X) := \dim_{\Q_\ell} H^i_{\etale}(X_{\overline{F}},\Q_\ell)$ the $i$-th Betti number of $X$ (this number is independant of $\ell$). A non-degenerate algebraic differential $2$-form on a variety is called \emph{symplectic} if it is closed with respect to the algebraic de Rham complex. A resolution of singularities of a variety $X$ is a proper birational morphism $Y \to X$, where $Y$ a non-singular variety. If $X$ and $S$ are varieties over a field $F$, a morphism $f : X \to S$ is called a \emph{fibration} if it is proper, surjective, and has connected fibres. When $F=\C$, by a very general fiber of $f$ we mean a fiber over a very general closed point.

\section{Definitions and first properties}
Throughout this section, we fix a field $F$ of characteristic $0$. If $F \subset L$ is a field extension and $X$ is a scheme over $F$, we will denote by $X_L$ the $L$-scheme $X \otimes_{F} L := X \times_{\Spec F} \Spec L$.

\subsection{Primitive symplectic varieties, base change and descent}\label{section_psvdef}
In this first subsection, we give the definition of a primitive symplectic variety. We will look at the first properties and establish some technical lemmas showing that this type of variety is stable by base change induced by field extensions (see Lemma \ref{lem_extofPSV} below for the exact statement), but also that it is possible to define them on a finitely generated subfield (cf. Corollary \ref{cor_descentofPSVtoQ}).

\begin{defn}\label{def_PSV}
    A \emph{projective primitive symplectic variety} over $F$ is a normal projective variety $X$ over $F$ of dimension $2n = \dim(X)$ such that
    \begin{itemize}
        \item there exists a non-zero closed $2$-form $\sigma \in \Gamma(X^{\reg},\Omega^2_{X/F})$ which is non-degenerate at any points of $X_{\reg}$, and a resolution of singularities $\pi : Y \to X$ such that $\pi^*\sigma$ extends to a global $2$-form on $Y$,
        \item $H^1(X,\Ox_{X}) = (0)$ and $H^0(X,\Omega^{[2]}_{X/F}) = F \cdot \sigma$, with $\Omega^{[2]}_{X/F} = (\Omega^2_{X/F})^{\vee\vee} \simeq \Omega^2_{X^{\reg}/F}$.
    \end{itemize}
\end{defn}
\begin{rmk}
    If there exists a resolution of singularities satisfying the pullback extension property of the symplectic $2$-form as above, then this remains true for any resolution. This is because two resolutions can always be dominated by a third.
\end{rmk}

We will need to manipulate various base changes induced by field extensions in order to bring us back to the well-known case of projective complex primitive symplectic varieties. A natural question then arises: is the base change of a projective primitive symplectic variety still a projective primitive symplectic variety? We can answer this question in the affirmative, as shown in the statement below.

\begin{lem}\label{lem_extofPSV}
    Let $F \subset L$ be a field extension and let $X$ be a projective primitive symplectic variety over $F$ of dimension $2n$. Then the base change $X_L$ is a projective primitive symplectic variety over $L$ of dimension $2n$.
\end{lem}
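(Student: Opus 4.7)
The plan is to verify each bullet in Definition \ref{def_PSV} for $X_L$ by transporting the data of $X$ through the flat base change $F \to L$ and exploiting the characteristic zero hypothesis at every step where separability is needed. First I would check that $X_L$ is a normal projective variety of dimension $2m$ over $L$: projectivity and dimension are immediate from the definition of base change; geometric integrality of $X$ over $F$ implies that $X_L$ is geometrically integral over $L$; and normality of $X_L$ follows because $F$ is perfect (being of characteristic zero), so $X$ is automatically geometrically normal, hence normality is preserved after any field extension.

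Next I would identify the regular loci. Since regular is equivalent to smooth in characteristic $0$, and smoothness is stable under base change, one has $(X_L)^{\reg} = (X^{\reg})_L$. Pulling back $\sigma \in \Gamma(X^{\reg}, \Omega^2_{X/F})$ along $X_L^{\reg} \to X^{\reg}$, and using the canonical isomorphism $\Omega^2_{X/F} \otimes_F L \simeq \Omega^2_{X_L/L}$ on smooth loci, yields a section $\sigma_L \in \Gamma(X_L^{\reg}, \Omega^2_{X_L/L})$ which is closed and non-degenerate since these properties can be checked on geometric fibres. For the resolution step, I would fix $\pi : Y \to X$ and base change to $\pi_L : Y_L \to X_L$: $Y_L$ is smooth over $L$ (base change of a smooth morphism), $\pi_L$ is proper and birational (properness is stable, and birationality is preserved because the open locus where $\pi$ is an isomorphism base changes to an open locus where $\pi_L$ is an isomorphism), and the global extension of $\pi^*\sigma$ on $Y$ base changes to a global section on $Y_L$ extending $\pi_L^*\sigma_L$.

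It remains to compute cohomology. Flat base change along $\Spec L \to \Spec F$ for the projective morphism $X \to \Spec F$ gives $H^1(X_L, \Ox_{X_L}) \simeq H^1(X, \Ox_X) \otimes_F L = 0$. For the holomorphic $2$-forms, the main point is to identify $\Omega^{[2]}_{X_L/L}$ with the base change $(\Omega^{[2]}_{X/F})_L$. Denoting by $j : X^{\reg} \hookrightarrow X$ and $j_L : X_L^{\reg} \hookrightarrow X_L$ the open inclusions, normality of both $X$ and $X_L$ and the codimension $\geq 2$ complement give
\begin{equation*}
\Omega^{[2]}_{X/F} \simeq j_*\Omega^2_{X^{\reg}/F}, \qquad \Omega^{[2]}_{X_L/L} \simeq j_{L*}\Omega^2_{X_L^{\reg}/L}.
\end{equation*}
Flat base change through the open immersion $j$, combined with the identification of regular loci and of sheaves of differentials, then yields the desired isomorphism $\Omega^{[2]}_{X_L/L} \simeq (\Omega^{[2]}_{X/F})_L$. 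A second application of flat base change for coherent cohomology on the projective scheme $X$ gives $H^0(X_L, \Omega^{[2]}_{X_L/L}) \simeq F \sigma \otimes_F L = L \cdot \sigma_L$, which completes the verification.

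The step I expect to require the most care is the identification $\Omega^{[2]}_{X_L/L} \simeq (\Omega^{[2]}_{X/F})_L$, since reflexive hulls do not commute with arbitrary base change in general; all other ingredients reduce to standard flat base change statements together with the fact that in characteristic zero every field extension is separable, so that normality, geometric integrality, smoothness, and the regular locus all behave well under $F \to L$.
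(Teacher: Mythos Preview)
Your proposal is correct and follows essentially the same approach as the paper: flat base change for the cohomology, identification of regular loci via the characteristic-zero hypothesis, base change of the resolution, and transport of the symplectic form. The only cosmetic difference is that the paper computes $\Gamma(X_L^{\reg},\Omega^2_{X_L/L}) = L\cdot\sigma_L$ directly on the regular locus via flat base change for the de Rham complex, whereas you first identify $\Omega^{[2]}_{X_L/L}\simeq(\Omega^{[2]}_{X/F})_L$ through $j_*$ and then apply flat base change on the projective $X$; both routes amount to the same computation, and your caution about reflexive hulls is well placed but, as you note, resolved here by the $j_*$-description and flat base change along the open immersion.
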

\begin{proof}
    First, let us note that $X_L$ is a projective variety over $L$ such that $\dim(X_L) = \dim(X)$ by base change. Furthermore, $X$ is automatically geometrically normal since $F$ is perfect \cite[\href{https://stacks.math.columbia.edu/tag/0C3M}{Tag 0C3M}]{stacks-project}, therefore $X_L$ is normal. By using the flat base change theorem \cite[\href{https://stacks.math.columbia.edu/tag/02KH}{Tag 02KH}]{stacks-project}, we obtain 
    \[H^1\left(X_L,\Ox_{X_L}\right) \simeq H^1\left(X,\Ox_X\right) \otimes_{F} L = (0).\] 
    
    Next, observe that the regular locus of $X$ commutes with the base change. Indeed, let $p : X_L \to X$ be the projection. By construction of the fibre product, we have $p^{-1}(X^{\reg}) \simeq (X^{\reg})_L$. We claim that the open subschemes $(X^{\reg})_L$ and $(X_L)^{\reg}$ of $X_L$ are equal. We simply need to show that they have the same underlying topological space. The set-theoretically inclusion $(X^{\reg})_L \subset (X_L)^{\reg}$ is provided, for example, by \cite[Proposition 5.1.(4)]{gortzwedhorn2}, and the reverse inclusion $(X_L)^{\reg} \subset (X^{\reg})_L$ is always true since $p$ is faithfully flat, see the discussion after \cite[Remark 18.50]{gortzwedhorn2}. From now on, we will denote the regular locus of $X_L$ by $X_L^{\reg}$.
    
    Finally, we must prove the existence of a symplectic 2-form on $X_L$ which satisfies the assumptions of Definition \ref{def_PSV}. Since the projection $p$ induces an isomorphism of complexes $p^*\Omega^\bullet_{X/F} \simeq \Omega^\bullet_{X_L/L}$ \cite[\href{https://stacks.math.columbia.edu/tag/07HX}{Tag 07HX}]{stacks-project}, we deduce that
    \[\Gamma(X_L^{\reg},\Omega^2_{X_L/L}) = L \cdot \sigma_L\]
    with $\sigma_L := p^*\sigma$ a symplectic 2-form. Let $\pi : Y \to X$ be a resolution of singularities. We claim that the base change $\pi_L : Y_L \to X_L$ is a resolution of singularities such that $\pi_L^* \sigma_L$ extends to a global $2$-form on $Y_L$. Indeed, $\pi$ is an isomorphism between dense open subsets $U \subset Y$ and $V \subset X$, so the same will apply to $\pi_L$ with $U_L \subset Y_L$ and $V_L \subset X_L$, which shows that $\pi_L$ is a birational morphism. Since smoothness and properness are stable under base change, we see that $\pi_L$ is a resolution of singularities. Denote by $q : Y_L \to Y$ the projection. Now observe that if $\omega \in H^0(Y,\Omega^2_{Y/F})$ is an extension of $\pi^*\sigma$, then $q^*\omega \in H^0(Y_L, \Omega^2_{Y_L/L})$ is an extension of $\pi_L^*\sigma_L$. This shows that $X_L$ is a primitive symplectic variety on $L$, thus completing the proof.
\end{proof}

\begin{lem}\label{lem_descentofpsvfg}
    Let $F'$ be a subfield of $F$ and let $X'$ be a variety over $F'$. If the base change $X := X'_{F}$ is a projective primitive symplectic variety over $F$, then $X'$ is a projective primitive symplectic variety over $F'$.
\end{lem}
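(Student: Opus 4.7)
The strategy is to descend each condition of Definition \ref{def_PSV} from $X = X'_F$ to $X'$ along the faithfully flat field extension $F' \subset F$.

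For the structural properties, $\dim X' = \dim X = 2m$ since $X'$ is a variety over $F'$ and dimension is preserved by base change; normality and projectivity of $X'$ follow from the corresponding properties of $X$ by faithfully flat descent. Flat base change applied to the structure sheaf gives
\[
H^1(X', \Ox_{X'}) \otimes_{F'} F \simeq H^1(X, \Ox_X) = 0,
\]
so faithful flatness of $F$ over $F'$ yields $H^1(X', \Ox_{X'}) = 0$.

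To produce the symplectic form, I would argue as in Lemma \ref{lem_extofPSV}: the regular locus commutes with base change and the projection $p : X \to X'$ induces an isomorphism of complexes $p^*\Omega^\bullet_{X'/F'} \simeq \Omega^\bullet_{X/F}$; together with flat base change this implies
\[
H^0((X')^{\reg}, \Omega^{[2]}_{X'/F'}) \otimes_{F'} F \simeq H^0(X^{\reg}, \Omega^{[2]}_{X/F}) = F \cdot \sigma,
\]
so the left-hand side is one-dimensional over $F'$. Any generator $\sigma'$ then pulls back to a nonzero scalar multiple of $\sigma$; closedness and pointwise non-degeneracy of $\sigma'$ (the latter meaning that the induced map $T_{(X')^{\reg}/F'} \to \Omega^1_{(X')^{\reg}/F'}$ is an isomorphism) both descend from the corresponding properties of $\sigma$ by faithful flatness.

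For the resolution condition, I would pick any resolution $\pi' : Y' \to X'$ (which exists since $\mathrm{char}\,F' = 0$); its base change $\pi'_F : Y'_F \to X$ is a resolution of $X$ by the argument of Lemma \ref{lem_extofPSV}, so the remark following Definition \ref{def_PSV} forces $(\pi'_F)^*\sigma'_F$ to extend to a global section of $\Omega^2_{Y'_F/F}$. To descend extendability, let $j : U' \hookrightarrow Y'$ be the inclusion of the complement of the exceptional locus and consider the quotient sheaf $Q' := j_*\Omega^2_{U'/F'} / \Omega^2_{Y'/F'}$ on $Y'$: the form $\pi'^*\sigma'$ extends to $Y'$ exactly when its image in $\Gamma(Y', Q')$ vanishes, and since $Y'$ is proper over $F'$ and $F' \subset F$ is faithfully flat, this vanishing can be tested after the base change to $Y'_F$, where it already holds. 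This final descent step is the main obstacle I anticipate: one needs to verify that the formation of the obstruction sheaf $Q'$ is compatible with the base change, which is a routine but nontrivial fpqc descent argument.
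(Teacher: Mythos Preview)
Your proposal is correct and follows essentially the same strategy as the paper's proof. For the extension step, the paper sidesteps your anticipated obstacle with the sheaf $Q'$ by working directly with the cokernel $C$ of the restriction map $\Gamma(Y',\Omega^2_{Y'/F'}) \to \Gamma(\pi'^{-1}(U'),\Omega^2_{Y'/F'})$ on global sections: flat base change identifies $C \otimes_{F'} F$ with the corresponding cokernel over $F$ (right exactness of $-\otimes_{F'} F$ together with flat base change for the two spaces of sections), so the vanishing of the class of $\pi'^*\sigma'$ in $C$ follows immediately from the known extendability over $F$, and no sheaf-level compatibility needs to be checked.
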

\begin{proof}
    First of all, note that $X'$ is normal since $X$ is geometrically normal. Moreover, thanks to \cite[Corollaire 6.6.5]{grothendieck1961elements} (although this result is stated for finite field extensions, it remains true without this finiteness assumption), we know that $X'$ is projective. Now fix a resolution of singularities $\pi' : Y' \to X'$ with $Y'$ a smooth variety over $F'$. Let $Y := Y'_{F}$, and denote by $p_X : X \to X'$ and $p_Y : Y \to Y'$ the projections. By base change, we obtain a resolution of singularities $\pi : Y \to X$ such that $\pi = \pi'_F$. The flat base change theorem gives $h^1(X', \Ox_{X'}) = 0$ and $h^0(X', \Omega^{[2]}_{X'/F'}) = 1$. As in the previous proof, the base change induces an isomorphism of algebraic de Rham complexes, so we deduce the existence of a unique (up to multiplication by a nonzero scalar) symplectic 2-form $\sigma'$ on $X'^{\reg}$ such that $\sigma := p_X^* \sigma'$ is a symplectic 2-form on $X^{\reg}$. The only obstruction to the fact that $X'$ is a primitive symplectic primitive variety is that, a priori, $\pi'^*\sigma'$ does not extend to a global $2$-form over $Y'$. Set $U' := X'^{\reg}$ and $U := U'_{F} = X^{\reg}$. Consider the following commutative diagram.
    \begin{center}
        \begin{tikzcd}
{\Gamma(Y,\Omega^2_{Y/F})} \arrow[rr, "\operatorname{res}"]                            &  & {\Gamma(\pi^{-1}(U),\Omega^2_{Y/F})}                             &  & {\Gamma(U,\Omega^2_{X/F})} \arrow[ll, "\pi^*"']                             \\
                                                                                       &  &                                                                         &  &                                                                                    \\
{\Gamma(Y',\Omega^2_{Y'/F'})} \arrow[rr, "\operatorname{res}"] \arrow[uu, "p_Y ^*"] &  & {\Gamma(\pi'^{-1}(U'),\Omega^2_{Y'/F'})} \arrow[uu, "p_Y^*"] &  & {\Gamma(U',\Omega^2_{X'/F'})} \arrow[uu, "p_X^*"] \arrow[ll, "\pi'^*"']
\end{tikzcd}
    \end{center}
    By assumption, there exists $\omega \in \Gamma(Y,\Omega^2_{Y/F})$ such that $\omega \vert_{\pi^{-1}(U)} = \pi^*\sigma$. Let $C$ be the cokernel of the restriction map $\Gamma(Y',\Omega^2_{Y'/F'}) \to \Gamma(\pi'^{-1}(U'),\Omega^2_{Y'/F'})$ and consider $[\pi'^*\sigma'] \in C$. By the flat base change theorem, the top line of the diagram corresponds to the bottom line tensored by $F$, and under this identification, we have that $\pi^*\sigma = \pi'^*\sigma' \otimes 1$ and that $C \otimes_{F'} F$ is the cokernel of the restriction map $\Gamma(Y,\Omega^2_{Y/F}) \to \Gamma(\pi^{-1}(U),\Omega^2_{Y/F})$. We deduce that $[\pi'^*\sigma'] \otimes 1 = [\pi^*\sigma] = 0 \in C \otimes_{F'} F$, and therefore there exists $\omega' \in \Gamma(Y', \Omega^2_{Y'/F'})$ such that $\omega' \vert_{\pi'^{-1}(U')} = \pi'^*\sigma'$ (note that, by injectivity of the restriction $\Gamma(Y,\Omega^2_{Y/F}) \to \Gamma(\pi^{-1}(U),\Omega^2_{Y/F})$, we have $\omega = p_Y^*\omega'$). This shows that $X'$ is a primitive symplectic variety over $F'$, thus completing the proof.
\end{proof}
Although it is well known that a variety defined over a field is actually defined over a finitely generated subfield, we recall this result in the next proposition.

\begin{prop}[Proposition 5.6 - \cite{borovoi2020equivariant}]\label{prop_definedfgsubf}
    Let $k$ be a field. For any $k$-variety $X$ and any subfield $k_0 \subset k$, the following holds.
    \begin{enumerate}
        \item There exists a finitely generated sub-extension $k_0 \subset k_1 \subset k$ and a $k_1$-variety $X_1$ such that $X = (X_1)_k$. 
        \item If $k$ is a subfield of $k''$, $f : X'' \to Y''$ is a morphism of $k''$-varieties and $X$, $Y$ are two $k$-varieties such that $X'' = X_{k''}$ and $Y'' = Y_{k''}$, then there exists a finitely generated sub-extension $k \subset k' \subset k''$ and a $k'$-morphism $f' : X' \to Y'$ where $X' = X_{k'}$ and $Y' = Y'_{k'}$ such that $(X'',Y'',f'') \simeq (X', Y', f') \otimes_{k'} k''$.
    \end{enumerate}
\end{prop}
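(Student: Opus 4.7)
The plan is to exploit that every variety of finite type over a field is cut out by finitely many polynomial equations and described by finitely many gluing data, all of which live in a finitely generated subfield. This is the concrete form of the principle that the category of schemes of finite presentation behaves well under filtered colimits of the base (EGA~IV, \S8).

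For item (1), I would first choose a finite affine cover $X = \bigcup_{i=1}^n \Spec A_i$ with $A_i = k[t_{i,1},\dots,t_{i,n_i}]/(f_{i,1},\dots,f_{i,m_i})$, together with gluing isomorphisms $\phi_{ij}$ between suitable principal open subsets. These data involve only finitely many elements of $k$. Letting $k_1$ be the subfield of $k$ obtained by adjoining to $k_0$ all the coefficients of the $f_{i,j}$ together with the elements of the $A_i$ (and of the relevant localizations) appearing in the formulas for the $\phi_{ij}$, one obtains a finitely generated sub-extension $k_0 \subset k_1 \subset k$. Over $k_1$ one writes down affine $k_1$-schemes with the identical equations and identical gluing data, and glues them to a $k_1$-scheme $X_1$ of finite type satisfying $X \cong X_1 \otimes_{k_1} k$. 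The cocycle condition for the gluings is a finite system of polynomial identities which, holding over $k$, is already valid over $k_1$.

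The real content of item (1) is then to ensure that $X_1$ is a \emph{variety}, i.e.\ geometrically integral and separated over $k_1$. Separatedness is encoded in the diagonal being a closed immersion, which on the affine cover reduces to finitely many ideal-generation conditions; after enlarging $k_1$ by adjoining finitely many additional elements (remaining finitely generated over $k_0$), we may assume this holds. Geometric integrality of $X_1$ means $X_1 \otimes_{k_1} \overline{k_1}$ is irreducible and reduced; this descends from the corresponding property over $\overline{k}$ via the standard limit result that geometric integrality and separatedness propagate through filtered colimits of base rings, applied to $k$ as the filtered colimit of its finitely generated sub-extensions of $k_1$. For item (2), I would apply item (1) to $X$ and $Y$ to obtain a finitely generated sub-extension $k \subset k'_0 \subset k''$ and $k'_0$-varieties $X'_0$, $Y'_0$ descending $X$ and $Y$. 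On a finite affine cover, the morphism $f = f''$ is given by finitely many ring homomorphisms, each specified by the images of finitely many generators; these images are polynomial expressions involving only finitely many elements of $k''$. Adjoining them to $k'_0$ gives a finitely generated sub-extension $k \subset k' \subset k''$ over which the ring maps are defined; they glue by the same finite compatibility argument as in (1), yielding the desired $k'$-morphism $f'$ with $f' \otimes_{k'} k'' = f''$.

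The main obstacle to a fully rigorous write-up is not the construction of the equations and morphisms — this is bookkeeping — but the descent of the property ``geometrically integral,'' which is the subtlest limit-type property involved. Once one accepts (or cites) the corresponding EGA IV limit theorem, everything else is a matter of enlarging the chosen finitely generated extension by the finitely many elements needed to witness each property, all while staying within the class of finitely generated extensions.
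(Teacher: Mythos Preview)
The paper does not give its own proof of this proposition; it is simply quoted from \cite{borovoi2020equivariant} (Proposition~5.6) and used as a black box. So there is no argument in the paper to compare against.

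Your sketch is the standard spreading-out argument and is essentially correct for item~(1). One small confusion in item~(2): you write that you would ``apply item~(1) to $X$ and $Y$ to obtain a finitely generated sub-extension $k \subset k'_0 \subset k''$ and $k'_0$-varieties $X'_0$, $Y'_0$ descending $X$ and $Y$.'' But in item~(2) the varieties $X$ and $Y$ are already given over $k$ by hypothesis; there is nothing to descend for them. One simply sets $X' := X_{k'}$ and $Y' := Y_{k'}$ for whatever $k'$ one ends up with, and the only object that needs to be descended is the morphism $f''$. Your subsequent sentence (``on a finite affine cover, the morphism $f''$ is given by finitely many ring homomorphisms, each specified by the images of finitely many generators\dots'') is exactly the right argument for that, so the slip is purely expository. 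With that correction, your proposal is a valid outline of the usual proof.
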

Therefore, combining Lemma \ref{lem_descentofpsvfg} and Proposition \ref{prop_definedfgsubf}, we obtain the following immediate corollary.

\begin{cor}\label{cor_descentofPSVtoQ}
    Let $X$ be a projective primitive symplectic variety over $F$. Then there exists a finitely generated subfield (over $\Q$) $F' \subset F$ and a projective primitive symplectic variety $X'$ over $F'$ such that $X \simeq X'_F$. We will say that $X$ is defined over $F'$.
\end{cor}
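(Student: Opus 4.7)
The plan is a two-step combination of the two ingredients cited just before the statement. First I would invoke Proposition \ref{prop_definedfgsubf}(1) with $k_0 = \Q$ and $k = F$: this directly yields a finitely generated sub-extension $\Q \subset F' \subset F$ and an $F'$-variety $X'$ such that $X \simeq X'_{F}$. At this stage nothing is known about the geometry of $X'$ beyond being an $F'$-variety.

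The second step is to upgrade $X'$ to a primitive symplectic variety. Since $X = X'_{F}$ is a projective primitive symplectic variety by hypothesis, Lemma \ref{lem_descentofpsvfg} applies verbatim to the inclusion $F' \subset F$ and gives that $X'$ itself is a projective primitive symplectic variety over $F'$. The isomorphism $X \simeq X'_F$ from the first step then produces exactly the data required by the statement.

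I expect no serious obstacle here: the two preceding results are tailored precisely for this conclusion, and the argument is just a concatenation. The only point that deserves a small verification is that the field $F'$ produced by Proposition \ref{prop_definedfgsubf} can be taken to be finitely generated over $\Q$ rather than just over some intermediate field, but this is immediate from taking $k_0 = \Q$ in the cited statement. Thus the proof will consist essentially of two sentences: \textquotedblleft Apply Proposition \ref{prop_definedfgsubf}(1) with $k_0 = \Q$ and $k = F$ to obtain $F'$ and $X'$; then apply Lemma \ref{lem_descentofpsvfg} to conclude that $X'$ is a projective primitive symplectic variety over $F'$.\textquotedblright
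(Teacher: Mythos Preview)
Your proposal is correct and follows exactly the approach the paper indicates: the corollary is stated as an immediate consequence of combining Proposition \ref{prop_definedfgsubf}(1) (with $k_0 = \Q$) and Lemma \ref{lem_descentofpsvfg}, which is precisely what you do.
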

Another application of these base change results allows us to see that the singularities and global vector fields of a primitive symplectic variety over $F$ behave as in the case $F=\C$.

\begin{lem}\label{lem_tangent}
    The singularities of a projective primitive symplectic variety $X$ over $F$ are canonical. Furthermore $H^0(X,\cT_{X}) = 0$, where $\cT_{X} = \cT_{X/F}$ is the tangent sheaf of $X$ over $F$.
\end{lem}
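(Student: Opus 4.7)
The plan is to treat the two claims separately: the canonicity of singularities follows from a direct discrepancy computation that is valid over any characteristic zero field, while the vanishing of global vector fields reduces to the complex case via the base change machinery already established in this section.

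For canonical singularities, the symplectic form $\sigma$ has maximal rank on $X^{\reg}$, so $\sigma^{\wedge m}$ is a nowhere-vanishing section of $K_{X^{\reg}}$. By normality of $X$, this section extends to a nowhere-vanishing section of the reflexive dualizing sheaf $\omega_X$, so $K_X$ is Cartier and trivial. Now fix a resolution $\pi : Y \to X$ as in Definition \ref{def_PSV}, and let $\omega \in H^0(Y, \Omega^2_{Y/F})$ be the extension of $\pi^*\sigma$. Then $\omega^{\wedge m}$ is a global section of $K_Y$ whose restriction to $\pi^{-1}(X^{\reg})$ coincides with $\pi^*(\sigma^{\wedge m})$, a nowhere-vanishing section of $\pi^*K_X|_{\pi^{-1}(X^{\reg})}$. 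Consequently, the zero divisor $E$ of $\omega^{\wedge m}$ is effective and $\pi$-exceptional, and we obtain $K_Y = \pi^*K_X + E$ with $E \geq 0$. Every discrepancy of $X$ with respect to $\pi$ is therefore nonnegative, so $X$ has canonical singularities.

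For the vanishing $H^0(X, \cT_X) = 0$, I would first reduce to $F = \C$. Since $\cT_{X/F} = (\Omega^1_{X/F})^{\vee}$, its formation commutes with every field extension, and flat base change yields
$$H^0\bigl(X_L, \cT_{X_L/L}\bigr) \simeq H^0\bigl(X, \cT_{X/F}\bigr) \otimes_F L$$
for any $F \subset L$. By Corollary \ref{cor_descentofPSVtoQ}, $X$ descends to a projective primitive symplectic variety $X'$ defined over a finitely generated subfield $F' \subset F$; fixing an embedding $F' \hookrightarrow \C$ and invoking Lemma \ref{lem_extofPSV}, the base change $X'_\C$ is a complex projective primitive symplectic variety. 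It thus suffices to prove the vanishing when $F = \C$.

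In the complex case, contraction with the symplectic form defines an isomorphism $\cT_{X^{\reg}} \xrightarrow{\sim} \Omega^1_{X^{\reg}}$, which by reflexivity and normality extends to an isomorphism $\cT_X \simeq \Omega^{[1]}_X$. It is therefore enough to prove $H^0(X, \Omega^{[1]}_X) = 0$, which for a complex projective primitive symplectic variety follows from the Hodge-theoretic results now available in this singular setting (see for example \cite{bakker2022algebraic}), combined with the hypothesis $H^1(X, \Ox_X) = 0$. The main obstacle is precisely this last vanishing: once one admits Hodge symmetry between $H^0(X,\Omega^{[1]}_X)$ and $H^1(X,\Ox_X)$ for complex primitive symplectic varieties, the rest of the argument is essentially formal.
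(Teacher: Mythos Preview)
Your proof is correct and follows essentially the same approach as the paper. For the canonicity of singularities, your discrepancy computation via $\sigma^{\wedge m}$ and the extension $\omega^{\wedge m}$ is identical to the paper's argument. For the vanishing of global vector fields, both you and the paper reduce to $F=\C$ via Corollary~\ref{cor_descentofPSVtoQ}, Lemma~\ref{lem_extofPSV}, and flat base change; the only difference is that the paper then simply cites \cite[Lemma~4.6]{bakker2022global} for the complex case, whereas you sketch the underlying argument (contraction with $\sigma$ giving $\cT_X\simeq\Omega^{[1]}_X$, then Hodge symmetry with $H^1(X,\Ox_X)=0$), which is essentially how that cited lemma is proved.
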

\begin{proof}
    For the first claim, it seems well known that symplectic singularities are always canonical, but we will quickly review the proof. Let $\dim(X) = 2n$, $\sigma \in \Gamma(X^{\reg}, \Omega^2_{X/F})$ be a nonzero symplectic 2-form on $X$ and $\pi : Y \to X$ a resolution of singularities. Denote by $K_X$ and $K_Y$ the canonical classes on $X$ and $Y$ respectively. Since $\sigma$ is symplectic, $\sigma^n \in \Gamma(X^{\reg}, \Omega^{2n}_{X/F})$ defines a global section of $\Ox_X(K_X)$ that vanishes nowhere. Consequently, we have $K_X \sim_{\text{lin}} 0$. By definition of a projective primitive symplectic variety, the pullback $\pi^*\sigma$ extends to a regular global 2-form $\omega$ on $Y$ and $\operatorname{div}(\omega) = K_Y \geq 0$. Now, up to linear equivalence, we obtain
    \[K_Y = \sum_E a(E,X) E,\]
    where the sum runs through the exceptional divisors $E$. Since $K_Y$ is effective, we see that the discrepancies $a(E,X)$ are non-negative and therefore X has canonical singularities. 
    
    For the second claim, let $F' \subset F$ be a finitely generated subfield such that $X \simeq X'_F$ is defined over $F'$ as in Corollary \ref{cor_descentofPSVtoQ}. After fixing an embedding $F' \subset \C$, we know that $X'_\C$ is a projective primitive symplectic variety over $\C$ thanks to Lemma \ref{lem_extofPSV}. By \cite[Lemma 4.6]{bakker2022global}, we have $H^0(X'_\C, \cT_{X'_\C}) = 0$ and the conclusion follows from the flat base change theorem.
\end{proof}

\subsection{The Picard scheme and the Picard lattice of a primitive symplectic variety}\label{section_picardscheme}
In this subsection, we fix a field $F$ of characteristic $0$ and a projective primitive symplectic variety $X$ over $F$. We quickly recall the definition and some properties of the Picard scheme of $X$. This scheme will be useful for understanding the interaction between the Picard group of $X$ and the group of invariant elements under the action of the absolute Galois group $\Gal(\overline{F}/F)$ on $\Pic(X_{\overline{F}})$. We will use these results freely in the following without necessarily referring to them explicitly. For a more in-depth reading on the topic, see for example \cite[Part. 5]{fantechi2005fundamental}.

\begin{defn}
    For any $F$-schemes $T$, we set $X_T := X \times_{\Spec F} T$. The relative Picard functor Pic$_{X/F}$ is the contravariant functor defined by
    \[\Pic_{X/F} : (\text{Sch}/F ) \to (\text{Ab}), ~~\Pic_{X/F}(T) := \Pic(X_T)/p_T^*\Pic(T),\]
    where $p_T : X_T \to T$ is the projection. We denote its associated sheaf in the big étale topology by $\Pic_{(X/F)(\etale)}$.
\end{defn}

\begin{thm}[Theorem 9.4.8 - \cite{fantechi2005fundamental}]
    The sheaf $\Pic_{(X/F)(\etale)}$ is representable by a separated scheme $\bPic_{X/F}$ locally of finite type over $F$. Moreover, since $X/F$ is projective, this scheme is a disjoint union of open subschemes, each an increasing union of open quasi-projective $F$-schemes.
\end{thm}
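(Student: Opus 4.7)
The plan is to follow Grothendieck's classical construction, representing $\bPic_{X/F}$ as a quotient of a union of Hilbert schemes of divisors. Fix a very ample line bundle $\Ox_X(1)$. For a line bundle $\cL$ on $X_T$, the Hilbert polynomial of the fibres of $\cL(n)$ with respect to $p_T$ is locally constant on $T$, so the étale sheaf $\bPic_{X/F}$ decomposes as a disjoint union $\coprod_{P} \bPic^P_{X/F}$ indexed by Hilbert polynomials $P \in \Q[t]$, and it suffices to represent each $\bPic^P_{X/F}$ by a quasi-projective $F$-scheme.

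For fixed $P$, I would choose $n$ so large that every line bundle $L$ with the prescribed Hilbert polynomial on a geometric fibre satisfies $H^i(X,L(n)) = 0$ for $i \geq 1$ and $h^0(X,L(n)) = P(n)$. Cohomology and base change then supplies a locally free sheaf of relative sections of rank $P(n)$, and sending a non-zero section to its zero divisor produces a morphism from the associated projective bundle into a Hilbert scheme $\Hilb^{Q}_{X/F}$ for a suitably shifted polynomial $Q$; the latter is projective by Grothendieck's existence theorem. Forgetting the section then exhibits $\bPic^P_{X/F}$ as the étale quotient of an open subscheme of $\Hilb^{Q}_{X/F}$ by the $\G_m$-action that rescales sections, producing a quasi-projective representing scheme. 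Separatedness of $\bPic_{X/F}$ follows from the valuative criterion using $H^0(X,\Ox_X) = F$ (a consequence of geometric integrality), and local finite-typeness is automatic since the tangent space at any $F$-point is $H^1(X,\Ox_X)$, which is finite-dimensional.

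The main subtlety I anticipate lies in realising the quotient by the rescaling $\G_m$-action as an honest scheme rather than a mere étale sheaf: a global splitting of this torsor exists only when $X(F) \neq \emptyset$, in which case one rigidifies by trivialising a universal line bundle along an $F$-point; otherwise one must invoke effective descent for affine group schemes in the étale topology, which is exactly why the sheafification is performed étale-locally. Finally, the projectivity of $X/F$ delivers only an \emph{increasing} union of quasi-projective opens inside each component, the exhaustion being controlled by the bounded families of Hilbert polynomials that index the components.
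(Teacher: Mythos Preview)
The paper does not give a proof of this statement: it is quoted verbatim from \cite[Theorem~9.4.8]{fantechi2005fundamental} and used as a black box, so there is no argument in the paper to compare your proposal against.

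That said, your outline is the classical Grothendieck strategy and is essentially what is carried out in the cited reference, but two points are garbled. First, the passage from $\operatorname{Div}$ to $\bPic$ is not a ``$\G_m$-quotient rescaling sections'': the Abel map $\operatorname{Div}^Q_{X/F}\to\bPic^P_{X/F}$ already has complete linear systems $\P(H^0(X_{\bar t},L))$ as fibres, and the representability step consists in showing that this map is a projective bundle (a Brauer--Severi scheme when $X$ has no rational point) over the target \emph{sheaf}, from which one deduces that the base sheaf is a scheme. Your $\G_m$ has already been divided out once you pass to the projective bundle, so invoking it again at the level of $\Hilb$ is not correct. Second, the sentence ``local finite-typeness is automatic since the tangent space at any $F$-point is $H^1(X,\Ox_X)$'' is not a valid argument: finite-dimensional tangent spaces do not by themselves force a scheme to be locally of finite type. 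In the actual proof, locally of finite type is inherited directly from the construction, since each $\bPic^P_{X/F}$ is obtained from the quasi-projective scheme $\operatorname{Div}^Q_{X/F}$.
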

Recall that the Brauer-Grothendieck group of a scheme $Y$ is $\Brauer(Y) = H^2_{\etale}(Y,\G_{m,Y})$. In the special case $Y = \Spec F$, this group is the usual Brauer group of $F$ in terms of Galois cohomology:
\[\Brauer(\Spec F) = \Brauer(F) = H^2\left(\Gal(\overline{F}/F), \overline{F}^\times \right).\] 

\begin{prop}[Proposition 4.3.2 - \cite{colliot2021brauer}]\label{prop_pic_brauer}
    Denote by $G_F = \Gal(\overline{F}/F)$ the absolute Galois group of $F$. The following sequence of abelian groups is exact.
    \begin{center}
        \begin{tikzcd}
0 \arrow[r] & \Pic(X) \arrow[r] & \Pic(X_{\overline{F}})^{G_F} \arrow[r] & \Brauer(F).
\end{tikzcd}
    \end{center}
\end{prop}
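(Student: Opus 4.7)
The plan is to derive the sequence from the Leray (equivalently Hochschild--Serre) spectral sequence associated to the structure morphism $\pi : X \to \Spec F$ with coefficients in the étale sheaf $\G_m$. Under the identification of étale cohomology of $\Spec F$ with Galois cohomology of $G_F = \Gal(\overline{F}/F)$, this spectral sequence takes the form
\[
E_2^{p,q} = H^p\bigl(G_F, H^q_{\etale}(X_{\overline{F}}, \G_m)\bigr) \Longrightarrow H^{p+q}_{\etale}(X, \G_m),
\]
and the first few terms of the associated low-degree exact sequence will deliver exactly what is claimed.

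First I would perform the standard dictionary of identifications: $H^1_{\etale}(X, \G_m) = \Pic(X)$ and $H^1_{\etale}(X_{\overline{F}}, \G_m) = \Pic(X_{\overline{F}})$ by the classical interpretation of $H^1$ with coefficients in $\G_m$; $H^2_{\etale}(\Spec F, \G_m) = \Brauer(F)$ by definition; and $H^2_{\etale}(X, \G_m) = \Brauer(X)$. Next I would observe that $X_{\overline{F}}$ is a projective integral $\overline{F}$-variety (using that $X$ is a variety, hence geometrically integral by convention in the paper, so the base change remains integral), which forces $H^0(X_{\overline{F}}, \Ox_{X_{\overline{F}}}) = \overline{F}$ and hence $H^0(X_{\overline{F}}, \G_m) = \overline{F}^\times$. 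Finally, Hilbert 90 provides the vanishing $H^1(G_F, \overline{F}^\times) = 0$.

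Plugging these identifications into the canonical five-term exact sequence
\[
0 \to H^1(G_F, \overline{F}^\times) \to \Pic(X) \to \Pic(X_{\overline{F}})^{G_F} \to H^2(G_F, \overline{F}^\times) \to H^2_{\etale}(X, \G_m)
\]
and killing the leftmost term via Hilbert 90 produces
\[
0 \to \Pic(X) \to \Pic(X_{\overline{F}})^{G_F} \to \Brauer(F) \to \Brauer(X),
\]
whose first four terms are precisely the sequence of the statement.

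The only mildly delicate step is the identification $H^0(X_{\overline{F}}, \G_m) = \overline{F}^\times$, which is where the hypothesis that $X$ is a projective variety (in the paper's sense of geometrically integral separated scheme of finite type, together with properness) enters critically: without it, non-constant invertible global functions might survive on $X_{\overline{F}}$ and one would obtain only a four-term sequence relating $\Pic(X)$ to $\Pic(X_{\overline{F}})^{G_F}$ modulo an extra Galois-cohomological correction. Everything else is the formal machinery of the low-degree exact sequence attached to a first-quadrant spectral sequence.
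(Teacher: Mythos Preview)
Your argument is correct and is precisely the standard Hochschild--Serre/Leray spectral sequence derivation of this exact sequence. Note, however, that the paper does not supply its own proof of this statement: it is quoted directly as Proposition~4.3.2 of \cite{colliot2021brauer}, with no argument given in the text. The proof in that reference proceeds exactly along the lines you have written, so your approach matches the intended one.
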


\begin{prop}[Corollary 4.1.3 - \cite{colliot2021brauer}]
    The abelian group $\Pic(X_{\overline{F}})$ is finitely generated.
\end{prop}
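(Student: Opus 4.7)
The plan is to reduce the statement, via base change and the Picard scheme machinery, to the classical Néron--Severi theorem. The crucial input is the hypothesis $H^1(X,\cO_X)=0$ built into Definition \ref{def_PSV}, which will force $\bPic^0_{X/F}$ to be trivial.

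First, I would replace $F$ by $\overline{F}$: by Lemma \ref{lem_extofPSV}, $X_{\overline{F}}$ is a projective primitive symplectic variety over $\overline{F}$, and by flat base change one has $H^1(X_{\overline{F}},\cO_{X_{\overline{F}}}) = H^1(X,\cO_X) \otimes_F \overline{F} = 0$. Since $\overline{F}$ is algebraically closed, $\Brauer(\overline{F})=0$ and any $\overline{F}$-variety admits $\overline{F}$-points, so the natural map $\bPic_{X_{\overline{F}}/\overline{F}}(\overline{F}) \to \Pic(X_{\overline{F}})$ is an isomorphism (Proposition \ref{prop_pic_brauer} is not actually needed at this step thanks to the triviality of the Brauer group). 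Thus it suffices to prove that this group of $\overline{F}$-points is finitely generated.

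Next, I would invoke the standard structure of the Picard scheme of a projective geometrically integral variety: the connected component of the identity $\bPic^0_{X_{\overline{F}}/\overline{F}}$ is a group scheme of finite type over $\overline{F}$, and the quotient $\bPic_{X_{\overline{F}}/\overline{F}}/\bPic^0_{X_{\overline{F}}/\overline{F}}$ is the Néron--Severi group scheme, whose group of $\overline{F}$-points is finitely generated by the Néron--Severi theorem. The Zariski tangent space of $\bPic^0_{X_{\overline{F}}/\overline{F}}$ at the origin is canonically identified with $H^1(X_{\overline{F}},\cO_{X_{\overline{F}}})$, which we have just shown vanishes. Since we are in characteristic zero, Cartier's theorem guarantees that every group scheme locally of finite type over $\overline{F}$ is reduced; thus $\bPic^0_{X_{\overline{F}}/\overline{F}}$ is a reduced, finite-type group scheme with trivial tangent space at the identity, and must be the trivial group.

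Combining these observations, $\Pic(X_{\overline{F}})$ coincides with the Néron--Severi group of $X_{\overline{F}}$, and is therefore finitely generated. There is no real obstacle in this argument — every ingredient is essentially off the shelf — and the only step that warrants a bit of care is the $\bPic_{X_{\overline{F}}/\overline{F}}(\overline{F}) = \Pic(X_{\overline{F}})$ identification, which is where the passage to the algebraically closed base field is essential.
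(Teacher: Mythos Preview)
The paper does not supply its own proof of this proposition; it simply cites it as Corollary~4.1.3 of \cite{colliot2021brauer}. Your argument is correct and is the standard one: the vanishing $H^1(X_{\overline{F}},\cO_{X_{\overline{F}}})=0$ forces the identity component $\bPic^0_{X_{\overline{F}}/\overline{F}}$ to be trivial (smoothness by Cartier in characteristic zero, connected of dimension zero), so $\Pic(X_{\overline{F}})$ coincides with the N\'eron--Severi group, which is finitely generated by the theorem of the base. There is nothing to compare here beyond noting that your write-up supplies the details the paper leaves to the reference.
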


Therefore, both $\Pic(X)$ and $\Pic(X_{\overline{F}})^{G_F}$ are finitely generated, and since the Brauer group $\Brauer(F)$ is a torsion group (\cite[Corollary 1.3.6]{colliot2021brauer}), $\Pic(X)$ is of finite index in $\Pic(X_{\overline{F}})^{G_F}$. As we will see in Proposition \ref{prop_pairingetale}, we can define on $\Pic(X)_{\tf} := \Pic(X) / \operatorname{( torsion )}$ a structure of lattice. We briefly note that there is a natural inclusion $\Pic(X) \hookrightarrow \bPic_{X/F}(F)$, which is in general not an equality; the failure of equality is measured by the group $\Brauer(F)$ \cite[Corollary 2.5.9]{colliot2021brauer}.

\vspace{0.1cm}

Lemma \ref{lem_extofPSV} and Corollary \ref{cor_descentofPSVtoQ} are particularly useful for defining a symmetric bilinear pairing on a second étale cohomology group of a primitive symplectic variety. Indeed, when working with the field $F = \C$ and $Y$ a primitive symplectic variety over $\C$, we can look at the associated analytic primitive symplectic variety $Y^{\operatorname{an}}$. For such varieties, we know that there exists an integral nondegenerate quadratic form on the second cohomology group similar to the Beauville-Bogomolov-Fujiki form on irreducible holomorphic symplectic manifolds (also known as hyperkähler manifolds). The existence of such a form is the result of the work of several mathematicians, see for example \cite[$\S$5.1]{bakker2022global} for more information. Before giving the definition, we need a preliminary result.

\begin{prop}[Theorem 8 - \cite{schwald2020fujiki}]
    Let $Y$ be a complex projective klt variety. There are isomorphisms $H^{2,0}(Y) \simeq H^0(Y, \Omega^{[2]}_Y)$ and $H^{0,2}(Y) \simeq H^2(Y,\Ox_Y)$. For every resolution $\pi : Z \to Y$, the pullback morphism $\pi^* : H^{2,0}(Y) \to H^{2,0}(Z)$ is bijective.
\end{prop}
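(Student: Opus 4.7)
The plan is to reduce the three assertions to well-known properties of klt singularities combined with the extension theorem for reflexive forms on klt spaces due to Greb-Kebekus-Kovács-Peternell. First I would use that klt singularities are both rational (Kollár-Mori) and Du Bois (Kovács). For any resolution $\pi : Z \to Y$, the Du Bois property implies that the pullback $\pi^* : H^2(Y,\C) \to H^2(Z,\C)$ is an injective, strict morphism of mixed Hodge structures. Since $H^2(Z,\C)$ is pure of weight $2$ and any sub-MHS of a pure HS is pure, $H^2(Y,\C)$ inherits a pure Hodge structure of weight $2$, producing a genuine Hodge decomposition $H^2(Y,\C) = H^{2,0}(Y) \oplus H^{1,1}(Y) \oplus H^{0,2}(Y)$ which is strictly preserved by $\pi^*$.

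The identification $H^{0,2}(Y) \simeq H^2(Y, \Ox_Y)$ then follows from Deligne's realization of the Hodge filtration via the filtered Du Bois complex $\underline{\Omega}^\bullet_Y$: the Du Bois condition gives $\underline{\Omega}^0_Y \simeq \Ox_Y$ in the derived category, so $H^{0,2}(Y) = H^2(Y, \underline{\Omega}^0_Y) \simeq H^2(Y, \Ox_Y)$. This is consistent with the isomorphism $H^2(Y, \Ox_Y) \simeq H^2(Z, \Ox_Z) = H^{0,2}(Z)$ coming from rational singularities via the Leray spectral sequence, which is precisely the restriction of $\pi^*$ to the $(0,2)$-piece.

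For $H^{2,0}(Y) \simeq H^0(Y, \Omega^{[2]}_Y)$ and the bijectivity of $\pi^* : H^{2,0}(Y) \to H^{2,0}(Z)$, the essential input is the extension theorem for reflexive $2$-forms on klt spaces: pullback induces an isomorphism
\[
H^0(Y, \Omega^{[2]}_Y) \simeq H^0(Z, \Omega^2_Z) = H^{2,0}(Z).
\]
Combined with the injection $\pi^* : H^2(Y,\C) \hookrightarrow H^2(Z,\C)$ of pure Hodge structures from the first step, and the fact that $\pi^* H^{2,0}(Y)$ is the intersection of the image with $H^{2,0}(Z)$ inside $H^2(Z,\C)$, this simultaneously yields $H^{2,0}(Y) \simeq H^0(Y, \Omega^{[2]}_Y)$ and the desired bijection. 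The main obstacle is the extension theorem, a deep result in the birational geometry of klt spaces; all other ingredients are formal once one knows klt singularities are both rational and Du Bois.
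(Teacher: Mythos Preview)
The paper does not give its own proof of this proposition; it is simply quoted as Theorem~8 of Schwald's paper. There is therefore no in-paper argument to compare against, so let me assess your sketch on its own.

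Your ingredients are the right ones (rational and Du~Bois for klt, plus the GKKP extension theorem), and the overall architecture is close to how Schwald proceeds. Two points deserve tightening.

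First, you attribute the injectivity of $\pi^*:H^2(Y,\C)\to H^2(Z,\C)$ to the Du~Bois property. That is not accurate: Du~Bois only controls $\underline{\Omega}^0_Y\simeq\Ox_Y$, hence the $(0,q)$ pieces, and normal log canonical (hence Du~Bois) varieties need not have pure $H^2$. What you actually need is purity of $H^2(Y,\C)$, equivalently $W_1H^2(Y,\C)=0$; for klt $Y$ this is obtained via rational singularities together with the extension theorem (or, as in Schwald, through a Leray spectral sequence argument), not from Du~Bois alone.

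Second, in your last paragraph you conclude surjectivity of $\pi^*:H^{2,0}(Y)\to H^{2,0}(Z)$ from GKKP by implicitly using a map $H^0(Y,\Omega^{[2]}_Y)\to H^2(Y,\C)$. For singular $Y$ no such map is available a priori: a reflexive $2$-form gives a class in $H^2(Y^{\reg},\C)$, and the restriction $H^2(Y,\C)\to H^2(Y^{\reg},\C)$ goes the wrong way. The clean fix is a dimension count. Once you know $H^2(Y,\C)$ is pure, Hodge symmetry gives $\dim H^{2,0}(Y)=\dim H^{0,2}(Y)$; rational singularities give $\dim H^{0,2}(Y)=\dim H^2(Y,\Ox_Y)=\dim H^2(Z,\Ox_Z)=\dim H^{2,0}(Z)$; together with injectivity of $\pi^*$ this forces $\pi^*:H^{2,0}(Y)\to H^{2,0}(Z)$ to be an isomorphism, and composing with the GKKP isomorphism $H^0(Z,\Omega^2_Z)\simeq H^0(Y,\Omega^{[2]}_Y)$ then yields $H^{2,0}(Y)\simeq H^0(Y,\Omega^{[2]}_Y)$.
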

In particular, every reflexive two-form on $Y$ defines a unique class in $H^2(Y,\C)$. Since the singularities of a complex projective primitive symplectic variety $(Y,\sigma)$ are canonical by Lemma \ref{lem_tangent}, we will also denote by $\sigma$ the resulting cohomology class in $H^2(Y,\C)$ for simplicity. Below, we provide the definition of the Beauville-Bogomolov-Fujiki form as defined by T. Kirschner in \cite[Definition 3.2.7]{kirschner2015period}, but we note that there is also another definition given by Y. Namikawa in \cite[Theorem 8, (2)]{namikawa2001extension}. Fortunately, these two definitions are equivalent, see for example \cite[Corollary 24]{schwald2020fujiki} for a proof.

\begin{defprop}[Lemma 20, Definitions 19 and 21 - \cite{schwald2020fujiki}]\label{prop_bbf_analytic}
    Let $Y$ be a complex projective primitive symplectic variety of dimension $2m$ and let $\sigma \in H^{2,0}(Y)$ be the class of a holomorphic symplectic $2$-form on $Y^{\reg}$ satisfying $\int_Y (\sigma \cdot \overline{\sigma})^m = 1$. Then the quadratic form $q_Y : H^2(Y,\C) \to \C$ defined by
    \[q_Y(\alpha) := \dfrac{m}{2} \int_Y \alpha^2 \cdot (\sigma \cdot \overline{\sigma})^{m-1} + (1-m) \left( \int_Y \alpha \cdot \sigma \cdot (\sigma \cdot \overline{\sigma} )^{m-1} \right) \left( \int_Y \alpha \cdot \overline{\sigma} \cdot (\sigma \cdot \overline{\sigma})^{m-1} \right),\]
    does not depend on the choice of $\sigma$ and is called the Beauville-Bogomolov-Fujiki form on $Y$. It is nondegenerate and, up to scaling by a real number, is defined over $\Z$. Furthermore, the associated bilinear form has signature $(3, b_2(Y)-3)$, satisfies $q_Y \vert_{\left(H^{2,0}(Y) \oplus H^{0,2}(Y)\right)_\R} > 0$ and the orthogonal complement to $H^{2,0}(Y) \oplus H^{0,2}(Y)$ with respect to $q_Y$ is equal to $H^{1,1}(Y)$.
\end{defprop}
From now on, we will define an analogue of the Beauville-Bogomolov-Fujiki form on a projective primitive symplectic variety $X$ over a field $F$ of characteristic $0$. 

\begin{prop}
    Let $F$ be a field of characteristic $0$ and $X$ a primitive symplectic variety over $F$. Then for all but finitely many prime number $\ell \geq 2$, there exists a canonical injective morphism $c_1^{\ell} : \Pic(X)_{\tf} \hookrightarrow H^2_{\etale}(X_{\overline{F}},\Z_\ell)$.
\end{prop}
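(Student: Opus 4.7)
The plan is to construct $c_1^{\ell}$ via the $\ell$-adic Kummer exact sequence and to extract its kernel using the finite generation of $\Pic(X_{\overline{F}})$ already recorded in this section. Working on $X_{\overline{F}}$ and using the Kummer sequence $0 \to \mu_{\ell^n} \to \G_{m} \to \G_{m} \to 0$ in the étale topology together with $H^1_{\etale}(X_{\overline{F}}, \G_m) = \Pic(X_{\overline{F}})$, the associated long exact sequence yields for each $n \geq 1$ an injection
\[
\Pic(X_{\overline{F}})/\ell^n \;\hookrightarrow\; H^2_{\etale}(X_{\overline{F}}, \mu_{\ell^n}).
\]
Since $\Pic(X_{\overline{F}})$ is finitely generated, the system $(\Pic(X_{\overline{F}})/\ell^n)_n$ automatically satisfies the Mittag-Leffler condition, and passing to the inverse limit preserves injectivity. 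This yields
\[
\Pic(X_{\overline{F}}) \otimes_\Z \Z_\ell \;\hookrightarrow\; H^2_{\etale}(X_{\overline{F}}, \Z_\ell(1)).
\]
Because $\overline{F}$ is algebraically closed of characteristic $0$, a compatible system of $\ell^n$-th roots of unity in $\overline{F}$ provides an isomorphism $\Z_\ell(1) \simeq \Z_\ell$ of $\Z_\ell$-modules (the choice only affects the Tate twist, so the canonicity of $c_1^\ell$ as a map to $H^2_{\etale}(X_{\overline{F}},\Z_\ell(1))$ is genuine). Precomposing with the base change map $\Pic(X) \to \Pic(X_{\overline{F}})$ furnishes the desired morphism $c_1^\ell$.

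It then remains to analyse its kernel. By Proposition \ref{prop_pic_brauer}, the base change map $\Pic(X) \hookrightarrow \Pic(X_{\overline{F}})$ is itself injective, so $\ker(c_1^\ell)$ coincides with the preimage in $\Pic(X)$ of the kernel of $\Pic(X_{\overline{F}}) \to \Pic(X_{\overline{F}}) \otimes_\Z \Z_\ell$. Writing $\Pic(X_{\overline{F}}) \simeq \Z^r \oplus T$ with $T$ the finite torsion subgroup, the latter kernel is precisely the subgroup of $T$ consisting of elements of order coprime to $\ell$, and equals the whole of $T$ as soon as $\ell \nmid |T|$. Consequently, for every prime $\ell$ not dividing the finite integer $|T|$, the map $c_1^\ell$ vanishes exactly on the torsion subgroup of $\Pic(X)$ and therefore induces the claimed injective morphism $\Pic(X)_{\tf} \hookrightarrow H^2_{\etale}(X_{\overline{F}}, \Z_\ell)$. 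Since $T$ is finite, this restriction excludes only finitely many primes.

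There is no serious obstacle here; the one delicate point is ensuring that the injectivity established at each finite level $\ell^n$ survives the passage to the inverse limit, which is guaranteed by the Mittag-Leffler property on the source, itself a direct consequence of the finite generation of $\Pic(X_{\overline{F}})$.
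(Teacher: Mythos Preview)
Your proof is correct and follows essentially the same approach as the paper: Kummer sequence at each finite level, passage to the inverse limit using finite generation of $\Pic(X_{\overline{F}})$, and exclusion of the finitely many primes dividing the torsion. Your treatment is in fact slightly more careful in two respects: you keep track of the Tate twist $\Z_\ell(1)$ before identifying it with $\Z_\ell$, and your condition $\ell \nmid |T|$ is marginally sharper than the paper's $\ell > e$ with $e$ the exponent of the torsion; but these are cosmetic differences, not a different argument.
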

\begin{proof}
    Taking the cohomology of the Kummer sequence (\cite[p.77]{colliot2021brauer}), we obtain for any prime number $\ell$ and for every natural number $n$ an injective morphism $c_1^{\ell,n} : \Pic(X_{\overline{F}}) \otimes_\Z \Z/\ell^n \Z \hookrightarrow H^2_{\etale}(X_{\overline{F}}, \Z/\ell^n \Z)$. Since the abelian groups $\Z/\ell^n \Z$ are of finite length, they form with the natural projection maps a Mittag-Leffler system and by using the fact that $\Pic(X_{\overline{F}})$ is a finitely generated abelian group, we obtain a natural isomorphism $\varprojlim \Pic(X_{\overline{F}}) \otimes_\Z \Z/\ell^n \Z \simeq \Pic(X_{\overline{F}}) \otimes_\Z \Z_\ell$. Finally, we have an injective map
    \[c_1^\ell : \Pic(X_{\overline{F}}) \otimes_\Z \Z_\ell \hookrightarrow H^2_{\etale}(X_{\overline{F}},\Z_\ell)\]
    for any prime integer $\ell$, where $c_1^\ell := \varprojlim c_1^{\ell, n}$. If $e$ is the exponent of the (finite) torsion subgroup of $\Pic(X_{\overline{F}})$, then for $\ell > e$, we have
    \[\Pic(X_{\overline{F}})_{\tf} \otimes_\Z \Z_\ell \simeq \Pic(X_{\overline{F}}) \otimes_\Z \Z_\ell.\]
    For such $\ell$, it suffices to consider the restriction of $c_1^\ell$ to $\Pic(X)_{\tf} \hookrightarrow \Pic(X_{\overline{F}})_{\tf} \hookrightarrow \Pic(X_{\overline{F}})_{\tf} \otimes_\Z \Z_\ell$ to obtain the injective morphisms in the statement, which completes the proof.
\end{proof}
\begin{rmk}
    In fact, it is possible to define an injective morphism as in the statement for any prime number $\ell$, but it will no longer necessarily be canonical. To be more precise, since $\Pic(X_{\overline{F}})$ is finitely generated, the canonical exact sequence
    \[0 \to \operatorname{( torsion )} \to \Pic(X_{\overline{F}}) \to \Pic(X_{\overline{F}})_{\tf} \to 0\]
    always splits. After choosing a section $s$, simply precompose $c_1^\ell : \Pic(X_{\overline{F}}) \otimes_\Z \Z_\ell \hookrightarrow H^2_{\etale}(X_{\overline{F}}, \Z_\ell)$ with $s\otimes 1$ to obtain an injection. We preferred to base our proof without the choice of such a section, even if this meant that our statement did not cover all the prime integers.
\end{rmk}

\begin{prop}\label{prop_pairingetale}
    Let $F$ be a field of characteristic $0$, $G_F := \Gal(\overline{F}/F)$ the absolute Galois group of $F$ and $X$ a primitive symplectic variety over $F$ of dimension $2m$. Then, for all but finitely many prime number $\ell \geq 2$, there exists a unique $G_F$-invariant pairing $q_{X,\ell} : H^2_{\etale}(X_{\overline{F}},\Z_\ell)^{\otimes 2} \to \Z_\ell$ satisfying the following properties.
    \begin{enumerate}
        \item For any $\alpha_1, \dotsb, \alpha_{2m} \in H^2_{\etale}(X_{\overline{F}},\Z_\ell)$, we have
        \[(\alpha_1 \cdot \alpha_2 \cdot \dotsc \cdot \alpha_{2m}) = \dfrac{1}{m!2^m}\sum_{\sigma \in \sym_{2m}} q_{X,\ell}(\alpha_{\sigma(1)}, \alpha_{\sigma(2)})\dotsb q_{X,\ell}(\alpha_{\sigma(2m-1)}, \alpha_{\sigma(2m)}),\]
        where $\sym_{2m}$ is the symmetric group of order $2m!$, and the left term is the cup product of $\alpha_1, \dotsc, \alpha_{2m}$.
        
        \item For any $\alpha \in \Pic(X)_{\tf}$, $q_{X,\ell}\left(c_1^\ell(\alpha),c_1^\ell(\alpha)\right)$ lies in $\Z$ and is positive as soon as $\alpha$ is ample.
    \end{enumerate}
    Furthermore, the restrictions of $q_{X,\ell}$ to $\Pic(X)_{\tf}^{\otimes 2}$ via $c_1^\ell$ do not depend on $\ell$. The resulting morphism will be denoted $q_X$.
\end{prop}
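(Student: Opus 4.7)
The plan is to reduce to the classical complex case, transport the integral Beauville--Bogomolov--Fujiki form from singular to étale cohomology via Artin's comparison theorem, and derive uniqueness, Galois invariance, and $\ell$-independence on $\Pic(X)_{\tf}$ from a single normalization argument. By Corollary \ref{cor_descentofPSVtoQ} one may write $X \simeq X'_F$ for a primitive symplectic variety $X'$ over a finitely generated subfield $F' \subset F$; fixing an embedding $F' \hookrightarrow \C$, Lemma \ref{lem_extofPSV} ensures that $X'_\C$ is a complex projective primitive symplectic variety, whose analytification $(X'_\C)^{\operatorname{an}}$ carries the integral BBF form $q^{\operatorname{top}}$ of Defprop \ref{prop_bbf_analytic}. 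Smooth/proper base change in $\ell$-adic étale cohomology (for algebraically closed extensions of fields of characteristic $0$) together with Artin's comparison theorem then provide canonical isomorphisms
\[H^2_{\etale}(X_{\overline F}, \Z_\ell) \simeq H^2_{\etale}(X'_{\overline{F'}}, \Z_\ell) \simeq H^2_{\etale}(X'_\C, \Z_\ell) \simeq H^2((X'_\C)^{\operatorname{an}}, \Z) \otimes_{\Z} \Z_\ell,\]
compatible with cup products and with first Chern classes of line bundles. I would define $q_{X,\ell}$ as the $\Z_\ell$-linear extension of $q^{\operatorname{top}}$ transported through these isomorphisms.

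Property (1) then follows from the classical Fujiki relation for $q^{\operatorname{top}}$ combined with the compatibility of the comparison isomorphism with cup products. For (2), a fixed ample class $\alpha \in \Pic(X)$ (which exists by projectivity of $X$) pulls back to an ample class on $X'_\C$ and, via Kodaira's embedding theorem, can be represented by a Kähler form on $(X'_\C)^{\operatorname{an}}$; the classical strict positivity $q^{\operatorname{top}}(\alpha, \alpha) > 0$ then transports verbatim. The $\ell$-independence of the restriction to $\Pic(X)_{\tf}$ is immediate: by construction these values agree with those of the $\Z$-valued form $q^{\operatorname{top}}$ on $\Pic(X) \hookrightarrow \Pic(X'_\C) \hookrightarrow H^2((X'_\C)^{\operatorname{an}}, \Z)$, and no $\ell$ appears in this computation.

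For uniqueness, suppose $q_1, q_2$ both satisfy (1) and (2). Specializing (1) to $\alpha_1 = \cdots = \alpha_{2m} = \alpha$ yields $q_j(\alpha, \alpha)^m = \frac{m!\, 2^m}{(2m)!}(\alpha^{2m})$ for $j=1,2$, so the quadratic forms $\alpha \mapsto q_j(\alpha, \alpha)$, regarded as polynomials on $V := H^2_{\etale}(X_{\overline F}, \Q_\ell)$, satisfy $q_1^m = q_2^m$. Factoring $\prod_{\zeta^m = 1}(q_1 - \zeta q_2) = 0$ in the integral domain $\overline{\Q_\ell}[V]$ forces $q_1 = \zeta_0 q_2$ for some fixed $m$-th root of unity $\zeta_0$, and evaluating on $c_1^\ell(\alpha_0)$ for a fixed ample $\alpha_0 \in \Pic(X)$ (whose image is $G_F$-invariant by Proposition \ref{prop_pic_brauer}) together with condition (2) pins down $\zeta_0 = 1$, since a positive rational number that is also a root of unity must equal $1$. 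Polarization then recovers $q_1 = q_2$ as symmetric bilinear forms. Galois invariance follows from this uniqueness: for any $g \in G_F$ the pairing $(\alpha, \beta) \mapsto q_{X, \ell}(g\alpha, g\beta)$ again satisfies (1) and (2) and hence coincides with $q_{X, \ell}$. The main technical delicacy I anticipate is the careful handling of Tate twists in the identification $H^{4m}_{\etale}(X_{\overline F}, \Z_\ell) \simeq \Z_\ell$ implicit in the statement of (1), together with the precise compatibility of the various comparison and base change isomorphisms with cup products and first Chern classes; once this bookkeeping is set up, the rest of the argument is formal.
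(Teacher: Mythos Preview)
Your proposal is correct and follows essentially the same approach as the paper: descend to a finitely generated subfield, embed into $\C$, transport the analytic BBF form through the étale/singular comparison isomorphisms, and deduce (1) from the Fujiki relation. The paper's proof is terser---it defers the verification of uniqueness, Galois invariance, and independence of the embedding to \cite[Proposition 2.1.5]{YANG2023108930}---whereas you spell out the uniqueness argument explicitly via the factorization $q_1^m - q_2^m = \prod_{\zeta^m=1}(q_1 - \zeta q_2)$ and the positivity normalization; this is a nice addition. Two minor points of phrasing: the isomorphism $H^2_{\etale}(X_{\overline F},\Z_\ell) \simeq H^2_{\etale}(X'_{\overline{F'}},\Z_\ell)$ is invariance of étale cohomology under extension of algebraically closed base fields (as in \cite[VI, Corollary 4.3]{milne1980etale}), not ``smooth base change'' (your $X$ is singular); and for (2) you should appeal directly to the positivity of the analytic BBF form on ample classes (built into Definition/Proposition \ref{prop_bbf_analytic}) rather than invoke Kodaira embedding, which is awkward to state on a singular variety.
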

\begin{proof}
    By Corollary \ref{cor_descentofPSVtoQ}, choose a finitely generated subfield $F' \subset F$ over $\Q$ and $X'$ a primitive symplectic variety over $F'$ such that $X = X'_{F}$. By fixing an embedding $\overline{F'} \subset \C$, we know from Lemma \ref{lem_extofPSV} that $X'_\C$ is a primitive symplectic variety over $\C$. By \cite[VI - Corollary 4.3]{milne1980etale}, for any prime integer $\ell$, we have $H^2_{\etale}(X_{\overline{F}},\Z_\ell) \simeq H^2_{\etale}(X'_{\overline{F'}},\Z_\ell) \simeq H^2_{\etale}(X'_\C,\Z_\ell)$ and by the comparison theorem \cite[Theorem 2]{artin1966etale}, we obtain an isomorphism $H^2_{\etale}(X'_{\overline{F'}}, \Z_\ell) \simeq H^2(X'^{\operatorname{an}}_{\C}, \Z) \otimes_\Z \Z_\ell$. We define $q_{X,\ell}$ thanks to this isomorphism and Proposition \ref{prop_bbf_analytic}. The rest of the proof is then identical to that of \cite[Proposition 2.1.5]{YANG2023108930} where the key argument is that the Beauville-Bogomolov-Fujiki form on an irreducible holomorphic symplectic manifold satisfies Fujiki relation, a relation which we also have for complex primitive symplectic varieties, see \cite[Theorem 2]{schwald2020fujiki}. In particular, the definition of $q_{X,\ell}$ is independent of the choice of an embedding $\overline{F'} \subset \C$ and this form verifies the properties of the statement.
\end{proof}

\section{Absolute cone conjectures for terminal Q-factorial primitive symplectic varieties}
In this section, we prove the movable and nef cone conjectures for a projective primitive symplectic variety $X$ over a field $F$ of characteristic $0$ with $b_2(X) \geq 5$ and $\Q$-factorial terminal singularities. Since $X$ is integral by definition, we will slightly abuse notation by identifying the notion of classes of Cartier divisors (for the linear equivalence relation) with the notion of classes of line bundles (for the isomorphism relation).

\subsection{Preliminaries}\label{sect_absolute_prel}
First, we recall the definitions and properties of the various cones that will be discussed in the rest of this paper. 

\begin{defn}
    Let $X$ be a primitive symplectic variety over a field $F$ of characteristic $0$ and let $\Pic(X)_\R := \Pic(X)\otimes_\Z \R$ be the \emph{real Picard space} of $X$. Inside this vector space, we consider the following convex cones.
    \begin{itemize}
        \item The ample cone $\Amp(X)$ of $X$, generated by ample Cartier divisors on $X$.
        \vspace{0.1cm}
        
        \item The nef cone $\Nef(X)$ of $X$, which is the closure of the ample cone. We denote by $\Nef^+(X)$ the convex hull of $\Nef(X) \cap \Pic(X)_\Q$ in $\Pic(X)_\R$.
        \vspace{0.1cm}
        
        \item The positive cone $\Pos(X)$ of $X$, which is the connected component of the cone of positive vectors of $\Pic(X)_\R$ (with respect to $q_X$) containing $\Amp(X)$. We denote by $\clPos(X)^+$ the convex hull of $\clPos(X) \cap \Pic(X)_\Q$ in $\Pic(X)_\R$.
        \vspace{0.1cm}

        \item The movable cone $\Mov(X)$ of $X$, generated by the classes of all effective integral divisors having a base locus of codimension at least $2$. Its closure is the cone $\clMov(X)$, its interior is the cone $\Mov^\circ(X)$ and $\clMov^+(X)$ is the convex hull of $\clMov(X) \cap \Pic(X)_\Q$ in $\Pic(X)_\R$.
        \vspace{0.1cm}
        
        \item The big cone $\Big(X)$ of $X$, which is the cone spanned by all big divisors. Recall that a Cartier divisor $D$ on $X$ is big if $h^0(X,\Ox_X(kD)) > c \cdot k^{\dim(X)}$ for some $c > 0$ and $k \gg 1$.
    \end{itemize} 
\end{defn}
\begin{rmk}
	Alternatively, we may work with the Neron-Severi space instead of the Picard space. Indeed, if $X$ is a primitive symplectic variety over a field $F$ of characteristic $0$, then $\Pic(X)_\mathbb{K} \simeq N^1(X)_\mathbb{K}$ for $\mathbb{K} = \Q$ or $\R$ by \cite[Corollary 9.5.13]{fantechi2005fundamental}.
\end{rmk}
This lemma will be used implicitly in the following and is certainly known to experts, but we have chosen to include a proof.

\begin{lem}\label{lem_amplemovbasechange}
    Let $F \subset L$ be an extension field, and $X$ a proper scheme of finite type over $F$. Denote by $p : X_L \to X$ the projection map, and suppose that $X_L$ is irreducible (in particular, $X$ is itself irreducible). Consider the following list of properties of divisors:
    \begin{itemize}
        \item $(P_1):$ ample,
        \item $(P_2):$ movable,
        \item $(P_3):$ big.
    \end{itemize}
    If $D \in \Pic(X)$ and $i \in \{1,2,3\}$, then $D$ is $(P_i)$ on $X$ if and only if $D_L := p^*D$ is $(P_i)$ on $X_L$.
\end{lem}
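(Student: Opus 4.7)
The strategy is to reduce each property to a standard faithfully flat base change statement for proper schemes. For ampleness, the forward direction is immediate: a very ample embedding $X \hookrightarrow \P^N_F$ furnished by $|kD|$ base-changes to a very ample embedding $X_L \hookrightarrow \P^N_L$ furnished by $|kD_L|$. The reverse direction is faithfully flat descent of ampleness, available from the Stacks Project (Tag 0892) or EGA IV, 2.7.2. For bigness, both directions follow from the growth characterization $h^0(X,\Ox_X(kD)) \geq c \cdot k^{\dim X}$ combined with the flat base change isomorphism $H^0(X_L,\Ox_{X_L}(kD_L)) \simeq H^0(X,\Ox_X(kD)) \otimes_F L$ and the equality $\dim X = \dim X_L$.

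For movability, the key geometric input is that for every effective integral divisor $E$ on $X$, the scheme-theoretic base locus of $|E_L|$ equals $p^{-1}(\bsloc|E|)$ --- this comes from base-changing the evaluation map $H^0(X,\Ox_X(E)) \otimes \Ox_X \to \Ox_X(E)$ --- and codimension is preserved under $p$ since $p$ is faithfully flat with equidimensional fibers. The forward direction then follows immediately: a decomposition $D = \sum_i a_i [E_i]$ with $E_i$ effective and $\codim \bsloc|E_i| \geq 2$ pulls back to a similar decomposition of $D_L$.

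The reverse direction for movability is the main obstacle, because a decomposition of $D_L$ in $\Mov(X_L)$ need not a priori descend to $X$. I plan to handle it in two complementary cases. For a finite Galois extension $L/F$ with group $G$, I would symmetrize: given $D_L = \sum_i a_i [E_i']$, the divisors $\widetilde{E}_i := \sum_{g \in G} g^* E_i'$ are effective and $G$-invariant, so by Galois descent they descend to effective divisors $E_i$ on $X$; the inclusion $\bsloc|\widetilde{E}_i| \subset \bigcup_{g \in G} g^*\bsloc|E_i'|$ (the base locus of a sum lies in the union of base loci) forces $\codim \bsloc|E_i| \geq 2$, giving $D = \sum_i (a_i/|G|)[E_i] \in \Mov(X)$. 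A general algebraic extension reduces to this via a finite Galois sub-extension of the Galois closure over which all relevant divisors are defined. A purely transcendental extension $L = F(S)$ is handled by spreading out and specialization: each $E_i'$ spreads to an effective relative Cartier divisor $\mathcal{E}_i$ on $X_U \to U$ over a non-empty open $U \subset \Spec F[S]$, and by generic flatness and upper semicontinuity of fiber base-locus dimension, restricting to any $F$-rational point $u_0 \in U$ (which exists as $F$ is infinite in characteristic zero) produces effective movable divisors $\mathcal{E}_{i, u_0}$ on $X$ with $D = \sum_i a_i [\mathcal{E}_{i, u_0}]$. A general $L/F$ is then treated by factoring through $F \subset F(S) \subset L$.
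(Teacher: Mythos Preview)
Your treatment of $(P_1)$ and $(P_3)$ is correct and essentially the same as the paper's: flat base change for cohomology, plus either the descent of ampleness (the paper uses the cohomological criterion instead, but the content is the same) or the growth characterization of bigness.

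For $(P_2)$ there is a mismatch of interpretation rather than a mathematical error. You read ``$D$ is movable'' as membership in the movable \emph{cone} $\Mov(X)$, and accordingly you first establish the key fact---that $\bsloc|E_L|=p^{-1}(\bsloc|E|)$ with codimension preserved under $p$---and then launch into Galois averaging and spreading-out to descend a cone decomposition of $D_L$. The paper, by contrast, takes ``$D$ is movable'' to mean simply $\codim_X\bsloc|D|\ge 2$ for the single line bundle $D$; under that reading, the very fact you already isolated gives both directions at once, since it yields the equality $\codim_X\bsloc|D|=\codim_{X_L}\bsloc|D_L|$. So your reverse-direction argument is correct (modulo the expected care in the specialization step), but it proves a stronger statement than this lemma claims. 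In the paper, the passage from the single-divisor statement to the cone equality $\Mov(X)=\Mov(\overline{X})\cap\Pic(X)_\R$ is done separately in the next lemma, and only for the algebraic closure, using exactly the Galois-averaging trick you outline; your transcendental spreading-out step is not needed anywhere in the paper.
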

\begin{proof}
    First of all, the flat base change theorem immediately implies that $D$ is big if and only if $D_L$ is big. Next, for the part of the statement concerning ampleness, one direction is clear: if $D$ is ample, then $D_L$ is ample. For the converse implication, assume that $D_L$ is ample. We show that $D$ satisfies the cohomological ampleness criterion \cite[\href{https://stacks.math.columbia.edu/tag/0B5U}{Tag 0B5U}]{stacks-project}. Let $\cF$ be a coherent $\Ox_X$-module. For every integers $p,n\geq 0$, by the flat base change theorem, we have an isomorphism
    \begin{equation}\label{eq_cohomample}
        H^p\left(X_L,\cF_L \otimes D_L^{\otimes n}\right) \simeq H^p\left(X,\cF \otimes D^{\otimes n}\right) \otimes_F L.
    \end{equation}
    The pullback $\cF_L := p^*\cF$ being coherent, we know from the cohomological ampleness criterion that there exists $n_0 \geq 0$ such that for every $n \geq n_0$ and every $p \geq 1$, the groups in (\ref{eq_cohomample}) are trivial. It follows that $D$ is ample, since $L$ is faithfully flat over $F$.
    
    Finally, we show that $D$ is movable if and only if $D_L$ is movable. We denote by $\bsloc(D)$ and $\bsloc (D_L)$ the base loci of $\abs{D}$ and $\abs{D_L}$ respectively. Since we are only interested in the codimension, we equip these schemes with their reduced structure. We have the set-theoretic equality $p^{-1}\left( \bsloc (D) \right) = \bsloc (D_L)$ since $H^0(X_L, D_L) = H^0(X,D)\otimes_F L$. Consider the base change $q : \bsloc (D) \otimes_F L \to \bsloc (D)$. The map $Z \mapsto \overline{q(Z)}$ is a well-defined surjective map from the set of irreducible components of $\bsloc (D) \otimes_F L$ to the set of irreducible components of $\bsloc (D)$, and $\dim Z = \dim \overline{p(Z)}$, see for example \cite[Corollary 5.45, Exercise 5.12]{gortzwedhorn1}. Since $X$ and $X_L$ are irreducible, we have
    \[\dim Z + \codim_{X_L} Z = \dim X_L ~~~\text{and}~~~ \dim \overline{p(Z)} + \codim_{X} \overline{p(Z)} = \dim X\]
    by \cite[Proposition 5.30.(2)]{gortzwedhorn1}. Since $\dim X = \dim X_L$, it follows that $\codim_{X_L} Z = \codim_{X} \overline{p(Z)}$ for every irreducible components $Z$ of $\bsloc (D) \otimes_F L$. As the underlying topological space of $\bsloc (D) \otimes_F L$ is $p^{-1} \left(\bsloc (D)\right) = \bsloc (D_L)$, we deduce that
    \[\codim_X \bsloc (D) = \codim_{X_L} \bsloc (D_L).\]
    This equality shows that $D$ is movable if and only if $D_L$ is movable, and this completes the proof.
\end{proof}
The following lemma is an adaptation of \cite[Lemma 2.1.4]{takamatsu2025finiteness} to the singular case and allows us to compare the different cones of $X$ listed above with those of $X_{\overline{F}}$.

\begin{lem}\label{lem_extensioncones}
    Let $X$ be a projective primitive symplectic variety over a field $F$ of characteristic $0$. Denote by $\overline{X}$ the base change $X_{\overline{F}}$ of $X$ to $\overline{F}$, and by $G_F$ the absolute Galois group of $F$. We have the following equalities:
    \begin{enumerate}
        \item $\Amp(X) = \Amp(\overline{X}) \cap \Pic(X)_\R$.
        \item $\Mov(X) = \Mov(\overline{X}) \cap \Pic(X)_\R$.
        \item $\Big(X) = \Big(\overline{X}) \cap \Pic(X)_\R$
        \item $\Nef(X) = \Nef(\overline{X}) \cap \Pic(X)_\R$.
        \item $\clMov(X) = \clMov(\overline{X})\cap \Pic(X)_\R$.
    \end{enumerate}
\end{lem}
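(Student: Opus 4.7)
The plan is to establish (1)--(3) uniformly by combining Lemma \ref{lem_amplemovbasechange} with a Galois-averaging argument, and then to deduce (4) and (5) from (1) and (2) by perturbing with an ample class. In each of the five items, one inclusion is routine: if $\alpha = \sum_i c_i [D_i]$ with $c_i > 0$ and the $D_i$ ample (resp.\ movable, big) integral divisors on $X$, then by Lemma \ref{lem_amplemovbasechange} each $(D_i)_{\overline{F}}$ has the corresponding property on $\overline{X}$, so $\alpha$ lies in the corresponding cone on $\overline{X}$; the cases (4) and (5) then follow by taking closures in $\Pic(X)_\R \subset \Pic(\overline{X})_\R$.

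For the reverse inclusion in (1), (2), (3), I would start with $\alpha \in \Pic(X)_\R$ and an expression $\alpha = \sum_i c_i [E_i]$, $c_i > 0$, in which the $E_i$ are integral divisors on $\overline{X}$ carrying the relevant property $P \in \{\text{ample}, \text{movable}, \text{big}\}$. Since $\Pic(\overline{X})$ is finitely generated, the action of $G_F = \Gal(\overline{F}/F)$ on it factors through a finite quotient $G$; using $\tau\alpha = \alpha$ for $\tau \in G$ (which holds because $\alpha \in \Pic(X)_\R \subset \Pic(\overline{X})_\R^{G_F}$), I would average:
\[
\alpha \;=\; \frac{1}{|G|} \sum_{\tau \in G} \tau\alpha \;=\; \frac{1}{|G|} \sum_i c_i \,[\tilde E_i], \qquad \tilde E_i := \sum_{\tau \in G} \tau E_i.
\]
Each $\tilde E_i$ is a positive integer combination of integral divisors on $\overline{X}$ all sharing property $P$ (the Galois action is by $F$-automorphisms of $\overline{X}$ and so preserves $P$), and I claim each $\tilde E_i$ itself still has $P$: sums of ample divisors are ample, sums of a big and an effective divisor are big, and for movability the inclusion $\bsloc(|D_1 + D_2|) \subset \bsloc(|D_1|) \cup \bsloc(|D_2|)$, obtained by multiplying global sections, implies that sums of movable integral divisors are again movable. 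Each class $[\tilde E_i]$ lies in $\Pic(\overline{X})^{G_F}$, and Proposition \ref{prop_pic_brauer} combined with the torsion nature of $\Brauer(F)$ yields an integer $N \geq 1$ with $N[\tilde E_i] \in \Pic(X)$ for all $i$. A second application of Lemma \ref{lem_amplemovbasechange} gives that $N[\tilde E_i]$ has $P$ on $X$ itself, and consequently $\alpha = \sum_i (c_i/(N|G|)) \cdot N[\tilde E_i]$ lies in the cone generated on $X$ by property $P$.

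For items (4) and (5), I would fix some $a \in \Amp(X)$, which by item (1) is also ample on $\overline{X}$. Given $\alpha \in \Nef(\overline{X}) \cap \Pic(X)_\R$, the class $\alpha + ta \in \Pic(X)_\R$ is ample on $\overline{X}$ for every $t > 0$ (nef plus ample is ample), hence by item (1) is ample on $X$; letting $t \to 0^+$ gives $\alpha \in \Nef(X)$. For $\alpha \in \clMov(\overline{X}) \cap \Pic(X)_\R$, the openness of $\Amp(\overline{X}) \subset \Mov^\circ(\overline{X})$ combined with the convex-cone identity $\overline{\Mov(\overline{X})} + \Mov^\circ(\overline{X}) \subset \Mov^\circ(\overline{X})$ forces $\alpha + ta \in \Mov(\overline{X}) \cap \Pic(X)_\R$, so item (2) yields $\alpha + ta \in \Mov(X)$ and then $\alpha \in \clMov(X)$ as $t \to 0^+$.

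The main obstacle is item (2). Two independent checks are required: the base-locus inclusion ensuring that sums of integral movable divisors remain movable, and the finite-index statement --- based on Proposition \ref{prop_pic_brauer} and the torsion nature of $\Brauer(F)$ --- guaranteeing that a suitable multiple of the Galois-averaged class $[\tilde E_i]$ descends to $\Pic(X)$. The ample and big cases are easier because the corresponding stability under summation of integral divisors is more immediate.
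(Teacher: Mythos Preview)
Your argument is correct and, for items (1)--(3), follows the same Galois-averaging strategy as the paper: pass to a finite quotient of $G_F$, replace each generator $E_i$ by its orbit sum, use the finite-index statement coming from Proposition~\ref{prop_pic_brauer} and the torsion of $\Brauer(F)$ to descend a multiple to $\Pic(X)$, and apply Lemma~\ref{lem_amplemovbasechange}. You are in fact more explicit than the paper on two points it leaves implicit: that a sum of movable integral divisors is again movable (via the base-locus inclusion), and that the same averaging works verbatim for the big cone.

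For items (4) and (5) there is a small but genuine difference. The paper invokes an abstract lemma (Lemma~\ref{lem_coneboundary}) asserting that for a convex cone $C$ and a subspace $S$ meeting its interior one has $\partial_S(C\cap S)=\partial_V(C)\cap S$, and deduces that taking closures commutes with intersecting with $\Pic(X)_\R$. Your approach instead perturbs by a fixed ample class $a\in\Amp(X)$: since $\alpha+ta$ lies in the interior of the relevant cone on $\overline{X}$ for every $t>0$, items (1) and (2) apply directly, and letting $t\to 0^+$ gives the result. This is the same topological fact unpacked by hand; your version is self-contained and avoids the extra lemma, while the paper's version isolates a reusable statement (which it indeed reuses later, e.g.\ in the proof of Proposition~\ref{prop_tak404_sing}).
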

\begin{proof}
We only prove the case of ampleness, the other cases are treated in a similar manner. Let $x = \sum a_i x_i$ be an element of $\Amp(\overline{X}) \cap \Pic(X)_\R$, where the $a_i$ are real numbers and the $x_i$ are integral classes of ample line bundles on $\overline{X}$. Take a finite Galois extension $F \subset F'$ such that the $x_i$ are contained in $\Pic(X_{F'})_{\tf}$ and let $\Gamma := \Gal(F'/F)$. For each index $i$, we consider 
\[x_i' = \dfrac{1}{\abs{\Gamma}} \cdot \sum_{\sigma \in \Gamma} \sigma(x_i),\]
where $\abs{\Gamma}$ is the order of $\Gamma$. Note that the $x_i'$, seen as elements of $\Pic(\overline{X})_{\tf}$, are $G_F$-invariant and are still classes of ample line bundles on $\overline{X}$. Therefore, using the fact that $\Pic(X)$ is of finite index in $\Pic(\overline{X})^{G_F}$ and Lemma \ref{lem_amplemovbasechange}, it follows that the $x_i'$ are contained in $\Amp(X)$. Since $x$ is $G_F$-invariant, we have $x = \sum a_i x_i'$, and thus $x$ lies in $\Amp(X)$. Finally, the last two equalities can be deduced from the following lemma.
\end{proof}

\begin{lem}[Lemma 3.8 - \cite{bright2020finiteness}]\label{lem_coneboundary}
    Let $V$ be a real vector space, let $C \subset V$ be a convex cone, and let $S \subset V$ be a subspace having non-empty intersection with the interior of $C$. Then we have
    \[\partial_S(C \cap S) = \partial_V(C) \cap S.\]
\end{lem}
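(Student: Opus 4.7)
The plan is to derive the identity $\partial_S(C \cap S) = \partial_V(C) \cap S$ from the set-theoretic decomposition $\partial = \overline{(\cdot)} \setminus \operatorname{int}(\cdot)$, by establishing separately
\[\overline{C \cap S}^{\,S} = \overline{C}^{\,V} \cap S \qquad \text{and} \qquad \operatorname{int}_S(C \cap S) = \operatorname{int}_V(C) \cap S.\]
Taking the difference of these two identities and using that $S$ is closed in $V$ then yields the statement at once.

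The key tool is the classical \emph{line segment principle} for convex sets: if $C \subset V$ is convex, $y \in \operatorname{int}_V(C)$ and $z \in \overline{C}^{\,V}$, then $(1-t)y + tz \in \operatorname{int}_V(C)$ for every $t \in [0, 1)$. The hypothesis that $S \cap \operatorname{int}_V(C) \neq \emptyset$ is used precisely to fix, once and for all, an anchor point $y_0 \in S \cap \operatorname{int}_V(C)$ to which other points can be connected when applying this principle inside $S$.

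For the identity of closures, the inclusion $\subseteq$ is automatic since $S$ is closed in $V$. Conversely, given $x \in \overline{C}^{\,V} \cap S$, the line segment principle gives $(1-t)y_0 + tx \in \operatorname{int}_V(C) \cap S \subset C \cap S$ for every $t \in [0, 1)$, and these points converge to $x$ in $S$ as $t \to 1$. For the identity of interiors, the inclusion $\supseteq$ is clear since a $V$-open neighbourhood of a point of $S$ intersects $S$ in an $S$-open neighbourhood. Conversely, given $x \in \operatorname{int}_S(C \cap S)$, choose $\epsilon > 0$ small enough that $z := x + \epsilon(x - y_0)$ still lies in $C \cap S$; then $x$ lies strictly between $y_0 \in \operatorname{int}_V(C)$ and $z \in \overline{C}^{\,V}$, so the line segment principle forces $x \in \operatorname{int}_V(C)$, as desired.

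I anticipate no real obstacle: once the line segment principle is granted — a standard fact in finite-dimensional real convex analysis, which is the only relevant setting here since the lemma is applied to $V = \operatorname{Pic}(X_{\overline{F}})_\R$ — the whole argument reduces to routine manipulations, with the anchor point $y_0$ doing all of the geometric work. The only subtle point is to remember to use the line segment principle twice, once for closures and once for interiors, since neither identity follows formally from the other.
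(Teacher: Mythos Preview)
Your argument is correct. The line segment principle is exactly the right tool, and your two-step reduction to the closure and interior identities is clean and complete. One minor point worth making explicit: in the interior step you use that $x - y_0 \in S$ so that $z = x + \epsilon(x - y_0)$ is a direction accessible from within $S$; this is fine since $S$ is a linear subspace, but it is the one place where the argument would break if $S$ were merely an affine subspace not through the origin (which, in the applications, it never is).

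As for comparison: the paper does not actually prove this lemma. It is quoted verbatim as \cite[Lemma 3.8]{bright2020finiteness} and used as a black box (with the remark that the original reference states it for \emph{closed} convex cones, whereas the version needed here is for arbitrary convex cones under the hypothesis $S \cap \operatorname{int}_V(C) \neq \emptyset$). Your proof therefore supplies strictly more than the paper does, and in particular it justifies the extension noted in Remark~\ref{rmk_invariantcommute}: nowhere do you assume $C$ closed, only convex.
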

\begin{rmk}\label{rmk_invariantcommute}
    Lemma \ref{lem_coneboundary} is stated in \cite{bright2020finiteness} for \emph{closed} convex cones, but it remains true more generally for convex cones $C$ satisfying the assumption $S \cap \operatorname{int}(C) \neq \emptyset$. Also, note that the proof of Lemma \ref{lem_extensioncones} shows more generally that 
    \[\left(\Pic(\overline{X})_{\R}\right)^{G_F} = \left(\Pic(\overline{X})^{G_F}\right)_{\R} = \Pic(X)_\R.\]
\end{rmk}

\begin{defn}
    Let $X$ be a primitive symplectic variety over an algebraically closed field $L$ of characteristic $0$. A prime Weil divisor $E$ on $X$ is called a \emph{prime exceptional divisor} if it is $\Q$-Cartier and satisfies $q_X(E,E) < 0$. For any such prime exceptional divisor $E$, we define the associated reflection
    \[r_E : \Pic(X)_\Q \to \Pic(X)_\Q, ~~\alpha \mapsto \alpha - 2 \frac{q_X(E,\alpha)}{q_X(E,E)}E.\]
\end{defn}
We will show that these reflections are in fact integral when the singularities of $X$ are terminal, i.e. they are isometries of $\Pic(X)_{\tf}$. We will need two technical lemmas. 

\begin{lem}[Lemma 2.15 - \cite{fu2025finiteness}]\label{lem_qfactterm_basechange}
    Let $L \subset M$ be an extension of algebraically closed fields of characteristic $0$ and let $X$ be a normal projective variety over $L$. Then $X_M$ is a normal projective $\Q$-factorial klt variety if and only if $X$ is itself $\Q$-factorial and klt (the same statement remains true if we replace klt by terminal).
\end{lem}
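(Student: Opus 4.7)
The base change morphism $p : X_M \to X$ is faithfully flat, and the same arguments used in Lemma \ref{lem_extofPSV} show that normality is preserved in both directions and that $p^{-1}(X^{\reg}) = X_M^{\reg}$. In particular the map $p^*$ sends prime Weil divisors on $X$ to effective Weil divisors on $X_M$, and it sends Cartier divisors to Cartier divisors. The plan is to treat the two properties separately, reducing each one to well-known descent and base change results.

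For the $\Q$-factoriality part, the forward implication is essentially immediate: a Weil divisor $D_M$ on $X_M$ is defined over a finitely generated $L$-subalgebra $R \subset M$, so by a standard Noetherian approximation argument one spreads $D_M$ out over $\Spec R$ and specializes at an $L$-point, reducing to $\Q$-Cartier-ness on $X$. For the converse, let $D$ be a Weil divisor on $X$; then $D_M := p^{*}D$ is a Weil divisor on $X_M$, and by assumption $nD_M$ is Cartier for some $n \geq 1$. Since $p : X_M \to X$ is faithfully flat, Cartier-ness descends along $p$ (being Cartier is equivalent to a local invertibility property that is fppf-local on the base), so $nD$ is Cartier on $X$, proving that $X$ is $\Q$-factorial. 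Taking $D = K_X$ in particular shows that $K_X$ is $\Q$-Cartier whenever $K_{X_M}$ is.

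For the klt/terminal part, choose a log resolution $\pi : Y \to X$, which exists by Hironaka since $\operatorname{char}(L) = 0$. Base-changing to $M$, the morphism $\pi_M : Y_M \to X_M$ is again proper, birational (the isomorphism locus base-changes to the isomorphism locus, as in the proof of Lemma \ref{lem_extofPSV}), and $Y_M$ is smooth because smoothness is preserved by base change; hence $\pi_M$ is a log resolution of $X_M$. Writing $q : Y_M \to Y$ for the projection, the compatibility of the dualizing sheaf with flat base change yields $\omega_{Y_M} \simeq q^{*}\omega_Y$ and $\omega_{X_M} \simeq p^{*}\omega_X$, so on the level of $\Q$-Cartier divisors we have $K_{Y_M} = q^{*}K_Y$ and $K_{X_M} = p^{*}K_X$. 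If $E_1, \dotsc, E_r$ are the prime exceptional divisors of $\pi$ and we write $K_Y = \pi^{*}K_X + \sum a(E_i, X) E_i$, pulling back by $q$ gives $K_{Y_M} = \pi_M^{*}K_{X_M} + \sum a(E_i, X) q^{*}E_i$. Since $q$ is faithfully flat with smooth source and target, each $q^{*}E_i$ is a reduced effective divisor whose irreducible components are exactly the prime exceptional divisors of $\pi_M$ mapping into $E_i$, and every prime exceptional divisor of $\pi_M$ arises in this way because the exceptional locus of $\pi_M$ is $q^{-1}$ of the exceptional locus of $\pi$. Hence the multisets of discrepancies of $\pi_M$ and $\pi$ coincide, and since (terminal) klt singularities can be tested on a single log resolution, $X$ is klt (resp. terminal) if and only if $X_M$ is.

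The main obstacle is the careful bookkeeping for the discrepancies: one must ensure that no new prime exceptional divisors appear on $Y_M$ that are not dominated by some $E_i$, and that the multiplicities of $q^{*}E_i$ along each of its components are $1$, so that the coefficient $a(E_i, X)$ literally reappears as the discrepancy along each component. Both points follow from the smoothness of $q$ and the fact that $q$ is an isomorphism over the generic points of the $E_i$'s, but they are the only substantive step of the argument; everything else reduces to faithfully flat descent or standard base change for coherent sheaves.
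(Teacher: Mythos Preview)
The paper does not give its own proof of this lemma: it is quoted verbatim from \cite{fu2025finiteness} and used as a black box. So there is no in-paper argument to compare your proposal against.

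Your plan is essentially correct and follows the standard route. A couple of remarks. First, the ``main obstacle'' you flag at the end is in fact harmless here precisely because $L$ is algebraically closed: each prime exceptional divisor $E_i$ on $Y$ is geometrically integral, so $q^{*}E_i=(E_i)_M$ is again an integral (in particular reduced and irreducible) divisor, and since the exceptional locus of $\pi_M$ is $q^{-1}$ of that of $\pi$, no new exceptional components can appear. Thus the discrepancy comparison is literally $a(E_i,X)=a((E_i)_M,X_M)$ with no multiplicity bookkeeping needed. Second, your spreading-out sketch for the implication ``$X$ $\Q$-factorial $\Rightarrow X_M$ $\Q$-factorial'' is the more delicate half and, as written, hides real content: one has to argue that the set of $t\in\Spec R$ for which $n\mathcal D_t$ is Cartier is constructible, bound $n$ uniformly by Noetherianity, and then pass to the generic point. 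This can be made rigorous (and is the approach taken in the cited reference), but it is not quite ``essentially immediate''. The converse direction via faithfully flat descent and the klt/terminal comparison via base-changed log resolutions are fine as stated.
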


\begin{lem}\label{lem_piciso}
    Let $L \subset M$ be an extension of algebraically closed fields of characteristic $0$, $X$ a projective primitive symplectic variety over $L$, and $X_M$ its base change to $M$. The projection $p : X_M \to X$ induces an isomorphism
    \[p^* : \Pic(X_M) \xrightarrow{\simeq} \Pic(X), ~L \mapsto p^*L.\]
    The induced isomorphism $p^*_{\tf} : \Pic(X_M)_{\tf} \xrightarrow{\simeq} \Pic(X)_{\tf}$ is an isometry with respect to the Beauville-Bogomolov-Fujiki forms on $X$ and $X_M$.
\end{lem}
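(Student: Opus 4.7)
The plan is to split the statement into two independent claims: first, that $p^*$ induces a group isomorphism between Picard groups, and second, that the induced map on torsion-free quotients is an isometry for the BBF forms. Both parts reduce to base-change considerations, exploiting the hypothesis $H^1(X, \Ox_X) = 0$ built into the definition of a primitive symplectic variety, together with the fact that $L$ and $M$ are algebraically closed of characteristic zero.

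For the first claim I would rely on the Picard scheme recalled in Section~\ref{section_picardscheme}. Since $H^1(X, \Ox_X) = 0$, the tangent space at the identity of the connected component $\bPic_{X/L}^0$ vanishes; as group schemes locally of finite type in characteristic zero are smooth, this forces $\bPic_{X/L}^0 = \Spec L$, so $\bPic_{X/L}$ is étale over $L$. Because $L$ is algebraically closed, $\Brauer(L) = 0$, and Proposition~\ref{prop_pic_brauer} gives $\Pic(X) = \bPic_{X/L}(L)$; the same holds for $X_M$. Since the Picard scheme commutes with base change, $\bPic_{X_M/M} \simeq \bPic_{X/L} \times_L M$. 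Now an étale scheme locally of finite type over an algebraically closed field is a disjoint union of copies of $\Spec L$ indexed by its $L$-points; as $\Spec M$ is connected, every $M$-point factors through a unique such component, yielding a canonical bijection $\bPic_{X/L}(L) \simeq \bPic_{X/L}(M)$. This bijection is precisely $p^*$, which establishes the group isomorphism.

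For the isometry claim I would apply Corollary~\ref{cor_descentofPSVtoQ} to descend $X$ to a finitely generated subfield $L_0 \subset L$ over $\Q$, so that $X = X_0 \times_{L_0} L$ and, since $L_0 \subset L \subset M$, also $X_M = X_0 \times_{L_0} M$. Fixing an embedding $\overline{L_0} \hookrightarrow \C$, the recipe of Proposition~\ref{prop_pairingetale} computes both $q_{X,\ell}$ and $q_{X_M,\ell}$ from one and the same BBF form on the complex primitive symplectic variety $X_0 \times_{L_0} \C$, via the canonical comparison isomorphisms for $\ell$-adic étale cohomology of the geometric fibres. The induced map $p^* : H^2_{\etale}(X, \Z_\ell) \to H^2_{\etale}(X_M, \Z_\ell)$ fits into a commutative triangle with these two comparison isomorphisms, and is moreover compatible with cup products and with first Chern classes of line bundles. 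Combining these compatibilities gives $q_{X_M,\ell}(p^*\alpha, p^*\beta) = q_{X,\ell}(\alpha, \beta)$ for all classes $\alpha, \beta$, and restriction to the torsion-free Picard lattice yields the claimed isometry.

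The main obstacle, as I see it, is the bookkeeping needed to really identify the two BBF forms under $p^*$: the uniqueness clause in Proposition~\ref{prop_pairingetale} is formulated over a fixed base field, so either one verifies directly that the pulled-back form satisfies the characterising Fujiki-type identity and the positivity on ample classes on $X_M$, or one coordinates the descent-to-$L_0$ and embedding-into-$\C$ procedures on both varieties simultaneously. Once this compatibility is pinned down, everything else is essentially formal.
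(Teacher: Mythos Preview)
Your proposal is correct and follows essentially the same route as the paper. The only noteworthy difference is cosmetic: the paper argues that $\bPic_{X/L}$ is \'etale by invoking $h^0(X,\cT_X)=0$ (Lemma~\ref{lem_tangent}) to compute its dimension, whereas you use $H^1(X,\Ox_X)=0$ to kill the tangent space at the identity directly; both lead to the same constant group scheme picture, and the paper's treatment of the isometry is simply a one-line appeal to the functoriality you spell out in more detail.
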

\begin{proof}
    By \cite[Corollary 9.5.13]{fantechi2005fundamental} and Lemma \ref{lem_tangent}, the Picard scheme $\bPic_{X/L}$ is a smooth group scheme over $L$ of dimension $h^0(X,\cT_X) = 0$, and is therefore étale over $L$. Since $L$ is algebraically closed, we deduce that $\bPic_{X/L}$ is a disjoint union of copies of $\Spec L$, i.e. is a constant group scheme. In particular, the inclusion $\iota : L \hookrightarrow M$ induces an isomorphism
    \begin{equation}\label{eq_lempiciso}
        \bPic_{X/L}(\iota) : \bPic_{X/L}(L) \xrightarrow{\simeq} \bPic_{X/L}(M).
    \end{equation}
    Similarly, using the fact that $\bPic_{X_M/M} = \bPic_{X/L} \otimes_L M$, it is clear that $\bPic_{X_M/M}$ is also a constant group scheme such that $\bPic_{X_M/M}(M) = \bPic_{X/L}(M)$. By \cite[Corollary 2.5.8]{colliot2021brauer}, we have $\bPic_{X/L}(L) \simeq \Pic(X)$ and $\bPic_{X_M/M}(M) \simeq \Pic(X_M)$. We then observe that the isomorphism (\ref{eq_lempiciso}) corresponds to the pullback morphism $p^* : \Pic(X) \to \Pic(X_M)$. The fact that $p^*_{\tf}$ is an isometry is a consequence both of the description of this isomorphism as the pullback of line bundles, and of the construction of the Beauville-Bogomolov-Fujiki form through natural and functorial isomorphisms, and this concludes the proof.
\end{proof}

\begin{prop}\label{prop_isometry}
    Let $X$ be a primitive symplectic variety over an algebraically closed field $L$ of characteristic $0$ with terminal singularities. If $E$ is a prime exceptional divisor on $X$, then $r_E$ induces by restriction an isometry of $\Pic(X)_{\tf}$.
\end{prop}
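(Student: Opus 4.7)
The plan is to split the claim into two parts and then reduce the genuinely substantive part to the known complex analytic case. First, $r_E$ visibly preserves the bilinear form $q_X$ on $\Pic(X)_\Q$: by construction it is the reflection across the hyperplane $E^\perp \subset \Pic(X)_\Q$ with respect to $q_X$, which is well defined because $q_X(E,E) < 0 \neq 0$, and such a reflection is a $\Q$-linear involution preserving $q_X$. What remains is the integrality statement, namely that $r_E(\Pic(X)_{\tf}) \subset \Pic(X)_{\tf}$; equivalently, that the linear functional $\alpha \mapsto 2 q_X(E,\alpha)/q_X(E,E)$ takes integer values on $\Pic(X)_{\tf}$.

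To prove integrality I would reduce to the case $L = \C$. By Corollary \ref{cor_descentofPSVtoQ} combined with Proposition \ref{prop_definedfgsubf} applied to $X$ and to a $\Q$-Cartier integral multiple of $E$, there exists a finitely generated subfield $L_0 \subset L$ over $\Q$ such that $(X,E)$ is the base change of a pair $(X_0, E_0)$ defined over $L_0$. After replacing $L_0$ by its algebraic closure $\overline{L_0}$ (still of characteristic zero), Lemma \ref{lem_piciso} identifies $(\Pic(X_{\overline{L_0}})_{\tf}, q_{X_{\overline{L_0}}})$ isometrically with $(\Pic(X)_{\tf}, q_X)$ via the pullback, in a way compatible with the class of $E$; so I may assume $L = \overline{L_0}$. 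Fixing an embedding $\overline{L_0} \hookrightarrow \C$, Lemma \ref{lem_extofPSV} gives that $X_\C$ is a complex projective primitive symplectic variety, Lemma \ref{lem_qfactterm_basechange} shows that the terminal property persists, the base change of $E$ has a component $E_\C$ which is again a prime $\Q$-Cartier divisor with $q_{X_\C}(E_\C, E_\C) < 0$, and Lemma \ref{lem_piciso} again identifies Picard lattices isometrically and sends the class of $E$ to that of $E_\C$. Thus it suffices to prove integrality of $r_{E_\C}$ in the complex setting.

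In the complex setting the statement is a Markman-type theorem. For an irreducible holomorphic symplectic manifold it is the classical result of Markman (see for instance the survey \cite{markman2011survey}), and in the singular primitive symplectic setting (with $\Q$-factorial terminal singularities) it has been established along the same lines in the recent literature, e.g. in \cite{lehn2024morrison}. The underlying mechanism is that the uniruledness of the prime exceptional divisor $E_\C$ (Boucksom's divisorial Zariski decomposition, applied using $q_{X_\C}(E_\C,E_\C) < 0$) produces, via deformation theory of the pair $(X_\C, E_\C)$, a birational involution of a locally trivial deformation of $X_\C$ whose induced action on $H^2$ realizes the reflection $r_{E_\C}$; since birational self-maps act by integral isometries on the BBF lattice, integrality follows. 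Once integrality holds for $r_{E_\C}$, the base change identification transports it back to the original setting.

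I expect the main obstacle to be a careful bookkeeping in the reduction step rather than any new geometric input: I must ensure that after base change (both to a finitely generated subfield and then to $\C$) the class $E$ genuinely corresponds to an honest prime exceptional divisor on $X_\C$, that terminal singularities and $\Q$-Cartier-ness of $E$ survive, and that the various isomorphisms of Picard groups from Lemma \ref{lem_piciso} do commute with the BBF form. All of this should fall out of the material in Section \ref{section_psvdef}, but the bookkeeping needs to be done explicitly.
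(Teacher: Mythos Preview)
Your proposal is correct and follows essentially the same route as the paper: reduce to a finitely generated subfield via Corollary~\ref{cor_descentofPSVtoQ}/Proposition~\ref{prop_definedfgsubf}, pass to the algebraic closure and then to $\C$ using Lemmas~\ref{lem_extofPSV}, \ref{lem_qfactterm_basechange}, \ref{lem_piciso}, and invoke the complex case from \cite{lehn2024morrison}. One small remark: your hedge ``the base change of $E$ has a component $E_\C$'' is unnecessary, since base change along an extension of algebraically closed fields preserves irreducibility, so $E_\C$ is itself prime.
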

\begin{proof}
    First, the case $L = \C$ is dealt with in \cite[Theorem 3.10]{lehn2024morrison}. Next, assume that $L \subset \C$ and denote by $p : X_{\C} \to X$ the projection. Note that $X_\C$ has terminal singularities thanks to Lemma \ref{lem_qfactterm_basechange}. Furthemore, if $E$ is a prime exceptional divisor, then its base change $E_\C \subset X_\C$ is still a prime exceptional divisor. Let $p^* : \Pic(X)_{\Q} \xrightarrow{\simeq} \Pic(X_\C)$ be the induced isometry of Lemma \ref{lem_piciso}. Since $r_E$ is equal to $(p^*)^{-1} \circ r_{E_\C} \circ p^*$ and since $r_{E_\C}$ induces by restriction an isometry of $\Pic(X_\C)_{\tf}$, it follows that $r_E$ induces an isometry of $\Pic(X)_{\tf}$.
    
    Finally, for the general case, take a finitely generated subfield $L' \subset L$ over $\Q$ and two primitive symplectic varieties $X'$ and $E'$ over $\overline{L'} \subset L$ such that $X = X'_L$ and $E = E'_L$ thanks to Lemma \ref{lem_extofPSV} and Corollary \ref{cor_descentofPSVtoQ}. By fixing an embedding $\overline{L'} \subset \C$, it follows that $r_{E'} \in O(\Pic(X')_{\tf})$ by the previous case. Since $r_E$ is conjugated as above to $r_{E'}$ by the isometry $\pi^* : \Pic(X')_{\Q} \xrightarrow{\simeq} \Pic(X)_{\Q}$ where $\pi : X \to X'$ is the projection, we conclude that $r_E \in O(\Pic(X)_{\tf})$, and this completes the proof.
\end{proof}
The following result will be useful.

\begin{lem}\label{lem_coeffentiers}
    Let $L$ be an algebraically closed field of characteristic $0$ and $X$ a projective primitive symplectic variety over $L$ with terminal singularities. For any prime exceptional divisor $E$ on $X$ and for any $\alpha \in \Pic(X)$, we have
    \[2\dfrac{q_X(E,\alpha)}{q_X(E,E)} \in \Z.\]
\end{lem}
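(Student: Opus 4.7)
The plan is to reduce to the case $L = \C$ by descent, as in the proof of Proposition \ref{prop_isometry}, and then invoke the complex analogue of the statement as a known structural result. Observe first that Proposition \ref{prop_isometry} does not by itself yield the conclusion: writing $c := 2q_X(E,\alpha)/q_X(E,E) \in \Q$, the fact that $r_E$ preserves $\Pic(X)_{\tf}$ only provides $cE \in \Pic(X)_{\tf}$, which forces $c \in \Z$ only when $E$ is primitive in $\Pic(X)_{\tf}$. The content of the lemma is precisely that $c$ is integral regardless of the divisibility of $E$.

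For the descent step, I would apply Corollary \ref{cor_descentofPSVtoQ} to obtain a finitely generated subfield $L' \subset L$ over $\Q$ and a projective primitive symplectic variety $X'$ over $\overline{L'} \subset L$ with $X \simeq X'_L$; enlarging $L'$ if needed, we may further assume that $E$ and a Cartier divisor representing $\alpha$ descend to an effective prime divisor $E'$ and a class $\alpha'$ on $X'$. Fixing an embedding $\overline{L'} \hookrightarrow \C$ and forming $X'_\C$, Lemmas \ref{lem_extofPSV} and \ref{lem_qfactterm_basechange} ensure that $X'_\C$ is a projective primitive symplectic variety over $\C$ with $\Q$-factorial terminal singularities, and one checks directly that $E'_\C$ is a prime exceptional divisor. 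Applying Lemma \ref{lem_piciso} to the two projections $X \to X'$ and $X'_\C \to X'$ yields isometries
\[
\Pic(X)_{\tf} \;\xleftarrow{\;\simeq\;}\; \Pic(X')_{\tf} \;\xrightarrow{\;\simeq\;}\; \Pic(X'_\C)_{\tf}
\]
with respect to the BBF forms, sending the classes of $E$ and $\alpha$ to those of $E'_\C$ and $\alpha'_\C$ respectively. Consequently the scalar $2q(E,\alpha)/q(E,E)$ is preserved, and it suffices to prove the statement in the case $L = \C$. In that situation, the integrality of $c$ for a prime exceptional divisor on a projective primitive symplectic variety with $\Q$-factorial terminal singularities is precisely the content of \cite[Theorem 3.10]{lehn2024morrison}, which generalises Markman's classical integrality theorem from the smooth hyperkähler setting to the singular PSV setting.

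The main difficulty lies in this last step rather than in the descent: mere preservation of the integral lattice $\Pic(X)_{\tf}$ by $r_E$ is strictly weaker than the integrality of the coefficient $c$ as it appears in the definition of $r_E$, and closing this gap requires Markman-type control on the possible divisibilities of a prime exceptional class in the BBF lattice, which is a genuinely non-formal geometric input.
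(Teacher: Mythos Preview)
Your descent argument reducing to $L = \C$ matches the paper's approach exactly. The divergence is in how the case $L = \C$ is handled.

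You cite \cite[Theorem 3.10]{lehn2024morrison} for the integrality of $c = 2q_X(E,\alpha)/q_X(E,E)$, but this is the same reference the paper invokes in Proposition~\ref{prop_isometry} for the weaker statement that $r_E$ is an integral isometry --- a statement you yourself correctly flag as insufficient in your opening paragraph. You have not explained why the cited theorem gives the stronger integrality of the coefficient itself rather than merely the integrality of $r_E$; as you note, the latter only forces $cE$ to be integral, not $c$. Without verifying the precise content of that external result, your argument over $\C$ rests on an assertion that may well reduce to the very statement you already dismissed.

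The paper does not rely on \cite[Theorem 3.10]{lehn2024morrison} here. Instead, over $\C$ it gives a short direct computation using \cite[Theorem 1.2]{lehn2021deformations}: a prime exceptional divisor $E$ is uniruled, and the BBF-dual class $[E]^\vee \in H_2(X,\Q)$ is proportional to the class of a general ruling curve $R$, with $[E]\cdot[R] = -2$. Writing $[E]^\vee = c_0[R]$ one finds $c_0 = -q_X(E,E)/2$, and then for any $\alpha \in H^2(X,\Z)$,
\[
2\,\frac{q_X(E,\alpha)}{q_X(E,E)} = 2\,\frac{c_0\,[R]\cdot\alpha}{q_X(E,E)} = -[R]\cdot\alpha \in \Z,
\]
since $R$ defines an integral homology class. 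This is exactly the ``Markman-type control'' you allude to in your closing paragraph: the geometric input is the existence of the ruling curve $R$ with $E\cdot R = -2$, and the integrality drops out from pairing with an integral curve class, not from any abstract divisibility analysis of $E$ in the lattice.
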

\begin{proof}
    Take a finitely generated subfield $L' \subset L$ over $\Q$ such that both $X$ and $E$ are defined over $\overline{L'}$, i.e. $X = X'_L$ and $E = E'_L$ for $X'$ a projective primitive symplectic variety with terminal singularities (Lemma \ref{lem_qfactterm_basechange}) over $\overline{L'}$ and $E' \subset X'$ a prime exceptional divisor on $X'$. By fixing an embedding $\overline{L'} \subset \C$ and considering the projective primitive symplectic variety $X'_\C$ with the prime exceptional divisor $E'_\C$ on it, we see that it is sufficient to deal with the case $L = \C$ since $\Pic(X)_{\tf}$ and $\Pic(X'_\C)_{\tf}$ are isometric by Lemma \ref{lem_piciso}. We therefore assume that $L = \C$. We will prove something more general by replacing $\Pic(X)$ with $H^2(X,\Z)$ in the statement of this lemma. Following the notations of \cite[Remark 2.9]{lehn2024morrison}, for $\alpha \in H^2(X,\Q)$, we write $\alpha^\vee \in H_2(X,\Q)$ for the unique class satisfying $\alpha^\vee \cdot \beta = q_X(\alpha, \beta)$ for any $\beta \in H^2(X,\Q)$. In this way, we obtain an isomorphism $H^2(X,\Q) \to H_2(X,\Q), \alpha \mapsto \alpha^\vee$. Denote by $[E] \in H^2(X,\Q)$ the class of $E$. By \cite[Theorem 1.2]{lehn2021deformations}, $E$ is uniruled and the dual $[E]^\vee$ is proportional to the class of a general curve $R$ in a distinguished ruling. It is also noted in the proof of this theorem that $[E] \cdot [R] = -2$, so if $[E]^\vee = c [R]$ with $c \in \Q$, we deduce that
    \[c = \dfrac{-q_X(E,E)}{2}.\]
    It immediately follows that for any $\alpha \in H^2(X,\Z)$, we have
    \[2\dfrac{q_X(E,\alpha)}{q_X(E,E)} = 2c \dfrac{[R] \cdot \alpha}{q_X(E,E)} = -[R] \cdot \alpha \in \Z\]
    since $R$ defines an integral cohomology class, which concludes the proof.
\end{proof}

\begin{defn}\label{def_groupw}
    Let $L$ be an algebraically closed field of characteristic $0$ and $X$ a projective primitive symplectic variety over $L$. The subgroup of $O(\Pic(X)_{\tf})$ generated by the reflections with respect to prime exceptional divisors will be denoted by $W_X$. 
\end{defn}
\begin{rmk}
    For $L = \C$, the group $W_X$ is not defined in this way in \cite[Lemma 5.4]{lehn2024morrison}. Indeed, the authors define it as the subgroup of $\operatorname{Mon}^{2,\operatorname{lt}}_{\operatorname{Hdg}}(X) \subset O(H^2(X,\Z),q_X)$ generated by reflections by classes of stably exceptional line bundles, see \cite[Definition 2.16]{lehn2024morrison} for the notations. However, thanks to \cite[Theorem 5.12.(3), Theorem 5.12.(5), Lemma 6.3.(2)]{lehn2024morrison}, the image in $O(\Pic(X)_{\tf})$ of their group is isomorphic to our group, so their definition coincides in the Picard lattice of $X$ with the definition of $W_X$ given above.
\end{rmk}

\begin{prop}\label{prop_algcl_fundomain}
    Let $L$ be an algebraically closed field of characteristic $0$ and let $X$ be a projective primitive symplectic variety over $L$ with $b_2(X) \geq 5$ and $\Q$-factorial terminal singularities. Then $\clMov(X) \cap \Pos(X)$ is a fundamental domain under the action of $W_X$ on $\Pos(X)$, cut out by closed half-spaces associated to prime exceptional divisors.
\end{prop}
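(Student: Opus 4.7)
\medskip
\noindent\textbf{Proof strategy.} The plan is to reduce the statement to the known complex analytic case via descent and base change, and then transport the fundamental domain via the isometries of Picard lattices constructed in Section \ref{section_picardscheme}. In the complex setting, the analogous fundamental domain result for projective primitive symplectic varieties with $\Q$-factorial terminal singularities is available from \cite[Proposition~6.9 / Theorem~5.12]{lehn2024morrison} (roughly speaking, $W_{X_\C}$ is generated by reflections $r_E$ associated to prime exceptional divisors, and its Weyl chamber decomposition cuts $\Pos(X_\C)$ into chambers whose closure in $\Pos(X_\C) \cap \clMov(X_\C)$ is a fundamental domain). So the main task is to check that all of the data (the Picard lattice, the BBF form, the positive and movable cones, the prime exceptional divisors and their reflections) are compatible with base change of algebraically closed fields.

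\medskip
\noindent\textbf{Step 1: Reduction to $L=\C$.} By Corollary \ref{cor_descentofPSVtoQ}, choose a finitely generated subfield $L' \subset L$ over $\Q$ and a projective primitive symplectic variety $X'$ over $L'$ with $X \simeq X'_L$; up to enlarging $L'$ we may assume $\overline{L'} \subset L$, and by Lemma \ref{lem_qfactterm_basechange} the variety $X'_{\overline{L'}}$ is still $\Q$-factorial with terminal singularities. Fix an embedding $\overline{L'} \hookrightarrow \C$. Lemma \ref{lem_piciso} applied to the two extensions $\overline{L'} \hookrightarrow L$ and $\overline{L'} \hookrightarrow \C$ yields canonical isometries
\[
\Pic(X_\C)_{\tf} \xleftarrow{\;\simeq\;} \Pic(X'_{\overline{L'}})_{\tf} \xrightarrow{\;\simeq\;} \Pic(X)_{\tf},
\]
which by construction send line bundles to their pullbacks.

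\medskip
\noindent\textbf{Step 2: Compatibility of cones and reflections.} Under the above isometries, ampleness, bigness, and movability of integral classes are preserved: this follows from Lemma \ref{lem_amplemovbasechange} applied to each base change (and its argument extends immediately to the movable and big cones when the base and total spaces are irreducible normal varieties over algebraically closed fields). Hence $\Amp$, $\Mov$, $\Big$, and their closures correspond, and so do $\Pos$ since the BBF form is preserved by Lemma \ref{lem_piciso}. For prime exceptional divisors, the key point is that $E \subset X$ is prime exceptional if and only if its pullback $E_\C \subset X_\C$ is prime exceptional: indeed being a prime Weil divisor and being $\Q$-Cartier descend along faithfully flat base change between algebraically closed fields, and $q_X(E,E) < 0$ is invariant since the BBF pairing is preserved. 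Conversely, every prime exceptional divisor on $X_\C$ descends to one on $X$ up to the aforementioned isometry: it suffices to see that its class in $\Pic(X_\C)_{\tf}$ corresponds to a class in $\Pic(X)_{\tf}$ whose reflection coincides with the reflection associated to a genuine prime exceptional divisor on $X$, which in turn uses that the defining class lies in the image of $\Pic(X)_{\tf}$ and that $r_E$ is already integral by Lemma \ref{lem_coeffentiers} and Proposition \ref{prop_isometry}. It follows that $W_X$ corresponds to $W_{X_\C}$ under the Picard isometry.

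\medskip
\noindent\textbf{Step 3: Transporting the fundamental domain and main obstacle.} By \cite[Theorem~5.12 / Proposition 6.9]{lehn2024morrison}, $\clMov(X_\C) \cap \Pos(X_\C)$ is a fundamental domain for the $W_{X_\C}$-action on $\Pos(X_\C)$, cut out by the half-spaces $\{q_{X_\C}(E_\C,\cdot) \geq 0\}$ for prime exceptional divisors $E_\C$ on $X_\C$. Combining this with Step 2, both the cone $\clMov(X) \cap \Pos(X)$ and the wall structure transport to the $W_X$-action on $\Pos(X)$, which yields the proposition. The main obstacle I expect is the bookkeeping in Step 2, specifically showing that every prime exceptional divisor on $X_\C$ has a class lying in the image of $\Pic(X)_{\tf}$ so that the full reflection group $W_{X_\C}$ is matched by $W_X$ and not merely contained in it; this is handled by the fact that under our isometry $p^\ast_{\tf}$ the entire torsion-free Picard lattice is in bijection, and by Lemma \ref{lem_coeffentiers} each reflection $r_{E_\C}$ already preserves the lattice, so it corresponds to a well-defined integral reflection on $\Pic(X)_{\tf}$ which one identifies with $r_E$ for the descended prime exceptional divisor $E$ (existence of which follows from the descent lemmas of Section \ref{section_psvdef} applied to the support of $E_\C$).
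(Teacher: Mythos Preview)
Your proposal is correct and follows essentially the same approach as the paper: descend to a finitely generated subfield, embed its algebraic closure into $\C$, use the Picard isometries of Lemma \ref{lem_piciso} together with Lemma \ref{lem_qfactterm_basechange} to match the singularity and $b_2$ hypotheses, and transport the fundamental domain from the complex case. The paper is terser (citing \cite[Lemma~6.2]{lehn2024morrison} for the $\C$-case and Lemma~\ref{lem_extensioncones} for the cone correspondence), whereas you spell out the bijection on prime exceptional divisors and the identification $W_X \simeq W_{X_\C}$ more carefully; this extra detail is fine and in fact addresses a point the paper leaves implicit.
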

\begin{proof}
    The case $L = \C$ is dealt with in \cite[Lemma 6.2]{lehn2024morrison}. For the general case, take a finitely generated subfield $L' \subset L$ over $\Q$ and a $X'$ a primitive symplectic variety over $\overline{L'}$ such that $X = X'_L$. After fixing an embedding $\overline{L'} \subset \C$, Lemma \ref{lem_piciso} gives isometries 
    \[\Pic(X)_\R \simeq \Pic\left(X'\right)_\R \simeq \Pic\left(X'_\C\right)_\R\]
    which are induced by the pullbacks along the projections of the base changes. Observe that both $X'$ and $X'_\C$ have $\Q$-factorial and terminal singularities thanks to Lemma \ref{lem_qfactterm_basechange}, and that $b_2(X') = b_2(X'_\C) \geq 5$ by \cite[VI - Corollary 4.3]{milne1980etale}. The movable cones (respectively the positive cones) of $X$, $X'$ and $X'_\C$ correspond bijectively through the above isometries by Lemma \ref{lem_amplemovbasechange}, therefore the result follows from the case $L = \C$.
\end{proof}
For a projective primitive symplectic variety $X$ with terminal singularities over a field $F$ of characteristic $0$, denote by $G_F$ the absolute Galois group of $F$, and by $\overline{X}$ the base change $X_{\overline{F}}$. Let $W_{\overline{X}} \subset O\left(\Pic(\overline{X})_{\tf} \right)$ be the subgroup generated by reflections with respect to prime exceptional divisors on $\overline{X}$ as in Definition \ref{def_groupw}. $G_F$ acts naturally by automorphisms on $\Pic(\overline{X})$, and therefore acts naturally by conjugation on $O(\Pic(\overline{X})_{\tf})$:
\[\forall g \in G_F, \forall f \in O( \Pic(\overline{X})_{\tf}), \forall x \in \Pic(\overline{X})_{\tf} : (\leftindex^g f)(x) = g(f(g^{-1}x)).\]
Since $\leftindex^g r_E = r_{g E}$ for every prime exceptional divisors $E$ on $\overline{X}$, the action by conjugation defines an action of $G_F$ on $W_{\overline{X}}$.

\begin{defn}
    With the above notations, we denote the fixed part of $W_{\overline{F}}$ under this action by
    \[R_X := W_{\overline{X}}^{G_F} = \enstq{r \in  W_{\overline{X}}}{\forall g \in G_F: \leftindex^g r = r}.\]
\end{defn}
As explained in \cite{bright2020finiteness}, although it is straightforward to see that $R_X$ induces an action on $\Pic\left(\overline{X}\right)_{\tf}^{G_F}$, it is not entirely clear that this action preserves $\Pic(X)_{\tf}$. To prove this, we would like to adapt the arguments used to prove \cite[Proposition 3.6]{bright2020finiteness}. We will need the notion of a Coxeter system which we recall below.

\begin{defn}
    Let $W$ be a group with identity element $1$, and let $T \subset W$ be a generating set such that $T = T^{-1}$ and $1 \notin T$. For any $w \in W$, the length $l(w)$ of $w$ is the smallest integer $q \geq 0$ such that $w$ can be written as a product of $q$ elements of $T$. Assume that every element of $T$ has order $2$. For $t_i,t_j \in T$, let $n_{i,j} = n_{j,i}$ denote the order of $t_it_j$ if it is finite, and set $n_{i,j} = 0$ otherwise. We say that $(W,T)$ is a Coxeter system if $T$ together with the relations $(t_it_j)^{n_{i,j}} = 1$ for $n_{i,j} \neq 0$ gives a presentation of $W$.
\end{defn}

\begin{defn}
    The Coxeter–Dynkin diagram of a Coxeter system $(W,T)$ is the graph whose vertex set is $T$, with an edge between two distinct vertices $t_i$ and $t_j$ if and only if $t_i$ and $t_j$ do not commute. Each edge joining $t_i$ and $t_j$ is labelled by $n_{i,j} - 2$ if $n_{i,j} > 0$, and by $0$ otherwise.
\end{defn}

\newpage

\begin{prop}\label{prop_propertiesCoxeter}
    Let $(W,T)$ be a Coxeter system.
    \begin{enumerate}
        \item If $W$ is a finite group, there exists a unique $w_0 \in W$ such that for any $w \in W \backslash \{w_0\}$, we have $l(w) < l(w_0)$. We say that $w_0$ is the longest element of $(W,T)$. \cite[Proposition 2.3.1]{bjorner2005combinatorics}

        \vspace{0.1cm}

        \item\label{prop_propertiesCoxeter_2} If $I$ is a non-empty subset of $T$, let $W_I$ denote the subgroup of $W$ generated by the elements of $I$. Then $(W_I,I)$ is a Coxeter system. \cite[Proposition 2.4.1.(i)]{bjorner2005combinatorics}
    \end{enumerate}
\end{prop}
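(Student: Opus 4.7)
The plan is to prove both items simultaneously using the geometric (reflection) representation of the Coxeter system $(W,T)$. First, I set up a real vector space $V$ with basis $\{\alpha_t\}_{t \in T}$ equipped with the symmetric bilinear form $B(\alpha_s, \alpha_t) = -\cos(\pi/n_{s,t})$ (with the convention $-\cos(\pi/0) := -1$), together with linear operators $\rho(t)(v) = v - 2B(v,\alpha_t)\alpha_t$. The first step is then to invoke Tits' theorem, which extends this assignment to a faithful representation $\rho : W \to \GL(V)$ and produces a root system $\Phi = W \cdot \{\alpha_t : t \in T\} = \Phi^+ \sqcup \Phi^-$, partitioned by the sign of the coefficients in the basis. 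The central tool is the length formula
\[
l(w) = \bigl|\{\alpha \in \Phi^+ : \rho(w)\alpha \in \Phi^-\}\bigr|,
\]
which I will establish by induction on $l(w)$ through the exchange condition.

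For item (1), since $W$ is finite, $\Phi^+$ is finite and lengths are bounded, so I pick some $w_0 \in W$ of maximal length. For each $t \in T$, maximality forces $l(w_0 t) < l(w_0)$, which by the length formula means $\rho(w_0)(\alpha_t) \in \Phi^-$. Iterating along a reduced expression for an arbitrary $w$ then gives $\rho(w_0)(\Phi^+) \subseteq \Phi^-$, and equality follows from $|\Phi^+| = |\Phi^-|$. For uniqueness, if $w_0'$ is another longest element, then $\rho(w_0 (w_0')^{-1})$ preserves $\Phi^+$, hence preserves the fundamental chamber $C_0 = \{v \in V : B(v,\alpha_t) > 0 \text{ for all } t \in T\}$; since $W$ acts simply transitively on chambers, $w_0 = w_0'$.

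For item (2), I let $V_I \subseteq V$ be the subspace spanned by $\{\alpha_t : t \in I\}$ and observe that the restriction of $B$, together with the reflections $\rho(t)|_{V_I}$ for $t \in I$, is by construction the geometric representation of the abstract Coxeter system $(W_I', I)$ with Coxeter matrix $(n_{i,j})_{t_i, t_j \in I}$, and is therefore faithful by Tits' theorem applied to $(W_I', I)$. The universal property of $W_I'$ furnishes a surjection $\pi : W_I' \twoheadrightarrow W_I \subseteq W$; composing with $\rho|_{V_I}$ recovers the faithful reflection representation of $W_I'$, forcing $\pi$ to be injective, hence an isomorphism of Coxeter systems.

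The main obstacle is the preliminary input of Tits' faithfulness theorem for the geometric representation; once this is granted, both items drop out in a few lines. A purely combinatorial alternative for (2), bypassing the geometric representation, would use the strong exchange condition in $(W,T)$ to show directly that any two reduced expressions of an element of $W_I$ are related by braid moves among generators in $I$, establishing the Matsumoto property for $W_I$; this route is more hands-on but less conceptually clean.
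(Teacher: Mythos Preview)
Your proof sketch is essentially correct, but note that the paper does not actually prove this proposition: it is stated as a known fact with direct citations to \cite[Proposition~2.3.1 and Proposition~2.4.1.(i)]{bjorner2005combinatorics}, and no argument is given. So there is nothing to compare your argument against in the paper itself; you have supplied a full proof where the paper simply invokes the literature.

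That said, your approach via the geometric representation and Tits' faithfulness theorem is the standard one and is sound. A couple of minor points you may want to tighten if you write this out in full: in item~(1), the step ``iterating along a reduced expression for an arbitrary $w$ then gives $\rho(w_0)(\Phi^+)\subseteq\Phi^-$'' is a bit compressed; the clean way is to use the length formula directly---once you know $\rho(w_0)(\alpha_t)\in\Phi^-$ for every $t\in T$, you get $l(w_0)\geq |T|$ and then an induction (or the formula $l(w)=|\{\alpha\in\Phi^+:\rho(w)\alpha\in\Phi^-\}|$ together with maximality) forces $\rho(w_0)(\Phi^+)=\Phi^-$. In item~(2), you should remark explicitly that each $\rho(t)$ for $t\in I$ preserves $V_I$ (clear, since $\rho(t)$ is the reflection in $\alpha_t\in V_I$), so that the restriction $\rho|_{V_I}$ is well defined on $W_I$; after that your faithfulness-forces-injectivity argument is clean.
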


The strategy is then to adapt \cite[Proposition 4.1.3]{takamatsu2025finiteness}, relying on \cite[Proposition 3.6]{bright2020finiteness}, to the singular case. In the next proposition, we will make the slight abuse of notation of using the same letter to denote a set consisting of exceptional prime divisors and the set of associated reflections.

\begin{prop}\label{prop_tak413_sing}
    Let $F$ be a field of characteristic $0$, and let $X$ be a primitive symplectic variety over $F$ with $b_2(X) \geq 5$ such that $\overline{X} := X_{\overline{F}}$ has $\Q$-factorial terminal singularities. Denote by $G_F$ the absolute Galois group of $F$. Let $I$ be a Galois orbit of prime exceptional divisors on $\overline{X}$, and denote by $W_I$ the subgroup of $W_{\overline{X}}$ generated by the reflections with respect to the elements of $I$. Let $\Phi$ be the set of Galois orbits $I$ of prime exceptional divisors such that $W_I$ is finite, and let $\cP_{\overline{X}}$ be the set of prime exceptional divisors of $\overline{X}$. Then the following holds.

    \begin{enumerate}
        \item\label{prop_tak413_1} Both $(W_{\overline{X}}, \cP_{\overline{X}})$ and $(W_I, I)$ are Coxeter systems.
    
        \item\label{prop_tak413_2} If $W_I$ is finite, the orbit $I$ is necessarily of one of the following two forms.
        
        \begin{enumerate}
            \item\label{prop_tak413_2a} For any distinct $E_1, E_2$ in $I$, we have $q_{\overline{X}}(E_1,E_2) = 0$.
            
            \item\label{prop_tak413_2b} For any $E_1 \in I$, there exists a unique $E_1' \in I \backslash \{E_1\}$ such that $q_{\overline{X}}(E_1,E_1') \neq 0$. In this case, we have $q_{\overline{X}}(E_1,E_1)=-2q_{\overline{X}}(E_1,E_1')$.
        \end{enumerate}

        \item\label{prop_tak413_3} For each $I \in \Phi$, let $r_I$ be the longest element of the Coxeter system $(W_I, I)$ as in Proposition \ref{prop_propertiesCoxeter}, and let $E_I$ be the sum of the elements in $I$. Then $\left(R_X, \enstq{r_I}{I \in \Phi}\right)$ is a Coxeter system and $r_I$ acts on $\Pic\left(\overline{X}\right)^{G_F}_{\tf}$ as the reflection in the class of a sufficiently divisible integral multiple of $E_I$.

        \item\label{prop_tak413_4} The action of $R_X$ on $\Pic\left(\overline{X}\right)_{\tf}$ preserves $\Pic(X)_{\tf}$.

        \item\label{prop_tak413_5} A class $\alpha \in \Pos(X)$ lies in $\clMov(X)$ if and only if $q_{\overline{X}}(\alpha,E) \geq 0$ for all $I \in \Phi$ and $E \in I$.
    \end{enumerate}
\end{prop}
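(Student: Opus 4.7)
The plan is to follow the reflection-group and Galois-descent strategy of \cite[Proposition 4.1.3]{takamatsu2025finiteness} and \cite[Proposition 3.6]{bright2020finiteness}, importing from the algebraically closed situation the information already packaged in Proposition \ref{prop_algcl_fundomain}. That proposition exhibits $\clMov(\overline{X})\cap\Pos(\overline{X})$ as a fundamental chamber for the action of $W_{\overline{X}}$ on $\Pos(\overline{X})$, cut out by the hyperplanes $E^\perp$ as $E$ runs over $\cP_{\overline{X}}$. Vinberg's general theorem on discrete reflection groups with a convex fundamental polyhedron then identifies $(W_{\overline{X}},\cP_{\overline{X}})$ as a Coxeter system, and the Coxeter property of $(W_I,I)$ follows from Proposition \ref{prop_propertiesCoxeter}.(\ref{prop_propertiesCoxeter_2}); this gives (\ref{prop_tak413_1}). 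For (\ref{prop_tak413_2}), the transitivity of $G_F$ on $I$ forces the induced automorphism group of the Coxeter diagram of $(W_I,I)$ to act transitively on vertices, so that all vertices share the same valence. I will then combine this uniformity with the fact that the allowed edge labels are pinned down by the BBF ratios $q_{\overline{X}}(E,E')^2/\bigl(q_{\overline{X}}(E,E)\,q_{\overline{X}}(E',E')\bigr)$, and use Corollary \ref{cor_descentofPSVtoQ} together with Lemma \ref{lem_piciso} to reduce the computation of these ratios for distinct prime exceptional divisors to the complex case, where the classification available via \cite{lehn2024morrison} leaves only the orthogonal and $A_2$ possibilities, yielding (\ref{prop_tak413_2a})--(\ref{prop_tak413_2b}) with the normalisation $q_{\overline{X}}(E_1,E_1)=-2q_{\overline{X}}(E_1,E_1')$.

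For (\ref{prop_tak413_3}) I plan to invoke the H\'ee--Steinberg theorem on fixed subgroups of Coxeter systems: when a group $G$ acts on a Coxeter system $(W,T)$ by permuting $T$, the fixed subgroup $W^G$ is again a Coxeter group whose canonical generators are the longest elements of the finite parabolic subgroups $W_I$ attached to $G$-orbits $I\subset T$. Applied to the conjugation action of $G_F$ on $(W_{\overline{X}},\cP_{\overline{X}})$, this immediately realises $(R_X,\{r_I : I\in\Phi\})$ as a Coxeter system. The identification of the action of $r_I$ on $\Pic(\overline{X})_{\tf}^{G_F}$ is then a direct calculation: in case (\ref{prop_tak413_2a}), $r_I$ is a product of pairwise commuting reflections $r_E$ and one checks by hand that $r_I(E_I)=-E_I$ while $r_I$ fixes $E_I^\perp$; in case (\ref{prop_tak413_2b}), $r_I$ factorises into commuting longest elements of $A_2$-subsystems, each of which acts by $-1$ on the line spanned by the sum of its two roots. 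Both cases give that $r_I$ restricts to the reflection in $E_I$. For (\ref{prop_tak413_4}) I use that $\alpha\in\Pic(X)_{\tf}$ is Galois-invariant, so that $q_{\overline{X}}(E,\alpha)$ is constant on each orbit $I$; combining this with the pairing count inside $I$ (orthogonal in case (\ref{prop_tak413_2a}), $A_2$-paired in case (\ref{prop_tak413_2b})) and Lemma \ref{lem_coeffentiers} applied to one representative $E\in I$ gives $2q_{\overline{X}}(E_I,\alpha)/q_{\overline{X}}(E_I,E_I)\in\Z$, so the reflection in $E_I$ preserves $\Pic(X)_{\tf}$.

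Finally, (\ref{prop_tak413_5}) follows by combining Lemma \ref{lem_extensioncones}, which gives $\clMov(X)=\clMov(\overline{X})\cap\Pic(X)_\R$, with the wall description of Proposition \ref{prop_algcl_fundomain}: for $\alpha\in\Pos(X)$ the condition $q_{\overline{X}}(\alpha,E)\geq 0$ depends only on the $G_F$-orbit of $E$, and the Coxeter description of $R_X$ in (\ref{prop_tak413_3}) identifies the walls of $\clMov(X)\cap\Pos(X)$ inside $\Pos(X)$ with the hyperplanes orthogonal to $E_I$ for $I\in\Phi$, so that the $\Phi$-restricted inequalities already cut out $\clMov(X)\cap\Pos(X)$. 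I expect the main obstacle to be (\ref{prop_tak413_2}): it mixes general Coxeter combinatorics with rather specific intersection-theoretic information on prime exceptional divisors, and the argument ultimately leans on the singular analogue of Markman's results available in \cite{lehn2024morrison}, which then has to be transported to an arbitrary field of characteristic zero via the base-change lemmas of Section \ref{section_psvdef}.
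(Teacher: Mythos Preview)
Your overall strategy matches the paper's: both follow the Bright--Logan--van Luijk/Takamatsu template, transporting the complex input of Proposition~\ref{prop_algcl_fundomain} through Galois descent. Parts (\ref{prop_tak413_1}), (\ref{prop_tak413_3}), and (\ref{prop_tak413_4}) are handled essentially identically in both accounts (the paper cites \cite{geck2014coxeter} for the H\'ee--Steinberg step, as you do, and the explicit reflection formulas for $r_I$ are the same). Two points deserve comment.

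For (\ref{prop_tak413_2}), your plan to reduce to $\C$ and then invoke ``the classification available via \cite{lehn2024morrison}'' is not what the paper does, and I do not see such a ready-made classification of pairwise BBF intersections of distinct prime exceptional divisors in that reference. The paper instead argues directly: after passing to $\C$, Lemma~\ref{lem_coeffentiers} guarantees that the restriction of $r_{E_1}r_{E_2}$ to the plane $\R E_1\oplus\R E_2$ lies in $\GL_2(\Z)$, so finiteness of $W_I$ forces its order into $\{2,3,4,6\}$; a trace/determinant check eliminates $4$ and $6$, leaving order $2$ (i.e.\ $q(E_1,E_2)=0$) or order $3$ (i.e.\ $q(E_1,E_1)=-2q(E_1,E_2)$). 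This is then combined with the fact that the Coxeter--Dynkin diagram of a finite Coxeter group is a forest, hence has a leaf, so by the vertex-transitivity you already noted every vertex has degree at most one. You should replace the appeal to an unspecified classification by this explicit integrality-and-trace argument.

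For (\ref{prop_tak413_5}), you and the paper diverge more substantively. You deduce it from the wall picture of the $R_X$-fundamental chamber, which requires a \emph{geometric} form of H\'ee's theorem (identifying $\clMov(\overline X)\cap\Pos(X)$ as the fundamental chamber for $R_X$, with walls indexed by $\Phi$). The paper does not rely on this; it instead shows directly that orbits $I$ with $W_I$ infinite contribute no wall to $\Pos(X)$: if some $\alpha\in\Pos(X)$ had $q(\alpha,E)<0$, connectedness of $\Pos(X)$ would produce $\lambda\in\Pos(X)$ with $q(\lambda,E)=0$; by Galois invariance $\lambda$ is then orthogonal to all of $I$, hence fixed by $W_I$, hence orthogonal to the infinite set $W_I\cdot E$ (using \cite{speyer2009powers}), contradicting the local finiteness of negative hyperplanes in $\Pos(\overline{X})$. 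Your route is valid provided you cite the chamber-level statement of H\'ee's theorem explicitly; the paper's route is more elementary and avoids that dependency.
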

\begin{proof}~\

(1) This is a consequence of Proposition \ref{prop_propertiesCoxeter}.(\ref{prop_propertiesCoxeter_2}) and \cite[Section 5.4]{heckman2018coxeter}.

\vspace{0.1cm}

(2) Assume that $W_I$ is finite and let $E_1$ and $E_2$ be two distinct elements in $I$. Up to replacing $\overline{F}$ by a finitely generated subfield over which $\overline{X}$, $E_1$ and $E_2$ are defined, the Lefschetz principle allows us to assume that $\overline{F} =  \C$. In this case, the classes of $E_1$ and $E_2$ in $\Pic(\overline{X})_\R$ are different by \cite[Theorem 1.1]{kapustka2019boucksom}. To simplify the notation, let $\alpha = q_{\overline{X}}(E_1,E_2)$ and $\beta = q_{\overline{X}}(E_1,E_1) = q_{\overline{X}}(E_2,E_2)$. Reflections $r_{E_1}$ and $r_{E_2}$ stabilize the $2$-plane $W := \R E_1 \oplus \R E_2$. Restricting these reflections to $W$, we obtain the matrices
\[r_{E_1} = \begin{pmatrix}-1 & -2\alpha/\beta \\ 0 & 1\end{pmatrix}, ~~r_{E_2} = \begin{pmatrix}-1 & 0 \\ -2\alpha/\beta & -1\end{pmatrix} ~~\text{and}~~ r_{E_1}r_{E_2} = \begin{pmatrix}-1 + 4\alpha^2/\beta^2 & 2\alpha/\beta \\ -2\alpha/\beta & -1\end{pmatrix}.\]
These are matrices with integer coefficients by Lemma \ref{lem_coeffentiers} and, by assumption, they have finite order. By classical results in linear algebra, these orders belong to the set \{2,3,4,6\}. The order of $r_{E_1}r_{E_2}$ cannot be equal to either $4$ or $6$, as can be seen by computing the determinant and the trace from the eigenvalues, which are conjugate primitive roots of unity. A calculation shows that it is equal to $2$ if and only if $\alpha = 0$, and that it is equal to $3$ if and only if $\beta = -2\alpha$. By \cite[Exercise 1.4]{bjorner2005combinatorics}, the Coxeter–Dynkin diagram of $(W_I, I)$ is a finite union of finite trees, and we can therefore deduce that there is a vertex of degree at most $1$. But now the Galois group $G_F$ acts transitively on the diagram, so it follows that each vertex has degree at most $1$. On the one hand, saying that this degree is $0$ is equivalent, by definition of the Coxeter-Dynkin diagram, to the fact that the order of $r_{E_1}r_{E_2}$ is equal to $2$, i.e. $\alpha = 0$ and we are then in situation (\ref{prop_tak413_2a}) of the statement. On the other hand, saying that this degree is $1$ is equivalent to the fact that the order of $r_{E_1}r_{E_2}$ is equal to $3$, i.e. $\beta=-2\alpha$ and we are then in situation (\ref{prop_tak413_2b}) of the statement.

\vspace{0.1cm}

(3) $\left(R_X, \enstq{r_I}{I \in \Phi}\right)$ is a Coxeter system by \cite[Theorem 1]{geck2014coxeter}. As under the action of $r_I$, we prove it separately for the two types (\ref{prop_tak413_2a}) and (\ref{prop_tak413_2b}) of orbits. In the first case (\ref{prop_tak413_2a}), we have $I = \{E_1, \dotsc, E_r\}$ and $E_I = E_1 + \dotsb + E_r$. Replacing $E_I$ by a sufficiently divisible multiple, we may assume that $E_I$ is Cartier. The group $W_I$ is isomorphic to the Coxeter group $A_1^r$ and the longest element is therefore equal to $r_I = r_{E_1} \circ \dotsb \circ r_{E_r}$. For $\alpha \in \Pic(\overline{X})^{G_F}_{\tf}$, we obtain
\begin{equation}\label{eq_rI_first}
    r_I(\alpha) = \alpha - 2 \dfrac{q_{\overline{X}}(E_1,\alpha)}{q_{\overline{X}}(E_1,E_1)}E_I,
\end{equation}
that is, $r_I$ acts on $\Pic(\overline{X})^{G_F}_{\tf}$ as the reflection in the class of $E_I$. In the second case (\ref{prop_tak413_2b}), we have $I = \{E_1, E_1', \dotsb, E_r, E_r'\}$ with $q_{\overline{X}}(E_i,E_i') = 1$, and the other $q_{\overline{X}}$-intersections are zero. We have $E_I = E_1 + E_1' + \dotsb + E_r + E_r'$. Once again, we may assume that $E_I$ is Cartier. The group $W_I$ is isomorphic to the Coxeter group $A_2^r$ and the longest element is therefore equal to $r_I = r_{E_1+E_1'} \circ \dotsb \circ r_{E_r+E_r'}$. For $\alpha \in \Pic(\overline{X})^{G_F}$, we obtain
\begin{equation}\label{eq_rI_sec}
    r_I(\alpha) = \alpha - 2^r \dfrac{q_{\overline{X}}(E_1,\alpha)}{q_{\overline{X}}(E_1,E_1)}E_I.
\end{equation}
which coincides with the reflection of $\alpha$ with respect to the class of $E_I$.

\vspace{0.1cm}

(4) Now, each class $E_I$ is the class of a Galois-fixed divisor on $\overline{X}$, so lies in $\Pic(X)_{\tf}$. So in both cases of (\ref{prop_tak413_2}), the formulas (\ref{eq_rI_first}) and (\ref{eq_rI_sec}) combined with Lemma \ref{lem_coeffentiers} show that the reflections $R_I$ preserve $\Pic(X)_{\tf}$. Since these reflections are generators of $R_X$ by (\ref{prop_tak413_3}), we deduce that $R_X$ preserves $\Pic(X)_{\tf}$.

\vspace{0.1cm}

(5) Take $\alpha \in \Pos(X) \subset \Pos(\overline{X})$. By Lemma \ref{lem_extensioncones}, $\alpha$ lies in $\clMov(X)$ if and only if it lies in $\clMov(\overline{X})$. By Proposition \ref{prop_algcl_fundomain}, $\clMov(\overline{X}) \cap \Pos(\overline{X})$ is cut out by closed half-spaces associated to prime exceptional divisors of $\overline{X}$. If we can show that
\begin{equation}\label{eq_posalpha}
    \forall E \in \cP_{\overline{X}} : q_{\overline{X}}(\alpha, E) \geq 0,
\end{equation}
then we are done. Let $E$ be such a prime exceptional divisor. By our assumptions, we already know that the inequality is true if the orbit $I$ of $E$ under the action of the absolute Galois group is such that $W_I$ is finite, so let us assume that $W_I$ is infinite. By contradiction, suppose that $q_{\overline{X}}(E,\alpha) < 0$. Since $\Pos(X)$ is connected, there exists $\lambda \in \Pos(X)$ such that $q_{\overline{X}}(E,\lambda) = 0$. As $\lambda$ is invariant under the action of the absolute Galois group, $\lambda$ is $q_{\overline{X}}$-orthogonal to every element in $I$ and is therefore fixed by the action of $W_I$. Consequently, $\lambda$ is also $q_{\overline{X}}$-orthogonal to $w(E)$ for any $w \in W_I$. Note that the orbit $I$ is finite because $\Pic(\overline{X})$ is of finite type, so that the action of the Galois group $G_F$ factors through a finite quotient. By \cite[Theorem 1]{speyer2009powers} and noting that $wr_Ew^{-1} = r_{w(E)}$, we deduce that $W_I \cdot E = \enstq{w(E)}{w \in W_I}$ is an infinite set. Denote by $N$ the set of all the classes $\beta \in \Pic(\overline{X})_\R$ such that $q_{\overline{X}}(\beta, \beta) < 0$. From what we have just said, $\lambda$ is $q_{\overline{X}}$-orthogonal to an infinite number of elements of $N$. However, arguing as in \cite[Remark 8.2.3]{huybrechts2016lectures}, we obtain that the following union
\[\bigcup_{\alpha \in N} \left(\alpha^{\perp} \cap \Pos(\overline{X})\right) \subset \Pos(\overline{X})\]
is locally finite, which yields a contradiction.
\end{proof}
Now that we know that the action of $R_X$ on $\Pic\left( \overline{X} \right)_{\tf}$ induces by restriction an action on $\Pic(X)_{\tf}$, we can prove the analogue of \cite[Proposition 4.0.4]{takamatsu2025finiteness}.

\begin{prop}\label{prop_tak404_sing}
    Let $F$ be a field of characteristic $0$ and let $X$ be a projective primitive symplectic variety over $F$ with $b_2(X) \geq 5$ such that $\overline{X} := X_{\overline{F}}$ has $\Q$-factorial terminal singularities. Then $\clMov(X) \cap \Pos(X)$ is a fundamental domain under the action of $R_X$ on $\Pos(X)$.
\end{prop}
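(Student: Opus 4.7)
The strategy is to descend the fundamental domain property from $\overline{X}$ (Proposition \ref{prop_algcl_fundomain}) to $X$, using the Coxeter reflection-group structure of $R_X$ established in Proposition \ref{prop_tak413_sing}. By part (3) of that proposition, $(R_X, \{r_I\}_{I \in \Phi})$ is a Coxeter system and each $r_I$ acts on $\Pic(X)_\R$ as the reflection in the hyperplane $H_I := E_I^\perp$. By part (5) combined with the Galois-invariance of any $\alpha \in \Pos(X) \subset \Pic(\overline{X})^{G_F}_\R$ (which forces $q_{\overline{X}}(\alpha, E_I) = |I| \cdot q_{\overline{X}}(\alpha, E)$ for any $E \in I$), the cone $\clMov(X) \cap \Pos(X)$ is precisely the closed chamber $\{\alpha \in \Pos(X) : q_{\overline{X}}(\alpha, E_I) \geq 0 \text{ for all } I \in \Phi\}$, so it remains to verify that this chamber is a fundamental domain for the reflection group it defines.

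For the disjointness of interior translates I would apply Lemma \ref{lem_coneboundary} with $V = \Pic(\overline{X})_\R$, $S = \Pic(X)_\R$, and $C = \clMov(\overline{X}) \cap \Pos(\overline{X})$; the hypothesis $S \cap C^\circ \neq \emptyset$ is ensured by the inclusion $\Amp(X) \subset \Amp(\overline{X}) \cap \Pic(X)_\R$ of Lemma \ref{lem_amplemovbasechange}. Consequently the interior of $\clMov(X) \cap \Pos(X)$ in $\Pic(X)_\R$ equals the interior of $\clMov(\overline{X}) \cap \Pos(\overline{X})$ in $\Pic(\overline{X})_\R$ intersected with $\Pic(X)_\R$. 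If $r_1, r_2 \in R_X \subset W_{\overline{X}}$ send two such interior points to the same point, then Proposition \ref{prop_algcl_fundomain} forces $r_1 = r_2$.

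The main obstacle is orbit coverage: every $\alpha \in \Pos(X)$ must be moved into $\clMov(X) \cap \Pos(X)$ by some element of $R_X$. Rather than attempting a naive termination of the reflection process (which is delicate in indefinite signature), I would argue by global minimization. Fix any $A \in \Amp(X)$. Each reflection $r_I$ is of negative-norm type (since $q_{\overline{X}}(E_I, E_I) < 0$), hence $R_X$ preserves $\Pos(X)$. Because $q_{\overline{X}}$ restricted to $\Pic(X)_\R$ is Lorentzian of signature $(1, \rho(X) - 1)$, the reverse Cauchy--Schwarz inequality in the positive cone gives
\[
q_{\overline{X}}(r\alpha, A) \;\geq\; \sqrt{q_{\overline{X}}(\alpha, \alpha)\, q_{\overline{X}}(A, A)} \;>\; 0 \qquad \text{for every } r \in R_X.
\]
Moreover, $R_X$ is a discrete subgroup of $O(\Pic(X)_{\tf})$ acting properly discontinuously on $\Pos(X)$ (a hyperbolic space), so $R_X \cdot \alpha$ is discrete and the sublevel sets $\{r \in R_X : q_{\overline{X}}(r\alpha, A) \leq M\}$ are finite. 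Therefore $r \mapsto q_{\overline{X}}(r\alpha, A)$ attains a minimum at some $r_0$.

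If $r_0 \alpha \notin \clMov(X) \cap \Pos(X)$, there is $I \in \Phi$ with $q_{\overline{X}}(r_0\alpha, E_I) < 0$, and a direct computation yields
\[
q_{\overline{X}}(r_I r_0 \alpha, A) \;=\; q_{\overline{X}}(r_0\alpha, A) \;-\; 2\,\frac{q_{\overline{X}}(r_0\alpha, E_I)}{q_{\overline{X}}(E_I, E_I)}\, q_{\overline{X}}(E_I, A).
\]
A case analysis on Proposition \ref{prop_tak413_sing}(2) gives $q_{\overline{X}}(E_I, E_I) < 0$ in both the orthogonal and the $A_2$-type cases, while the Fujiki relation applied to the ample $A$ against the effective $E_I$ gives $q_{\overline{X}}(E_I, A) > 0$. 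The correction term is therefore strictly negative, contradicting the minimality of $r_0$. Hence $r_0 \alpha \in \clMov(X) \cap \Pos(X)$, which completes the argument.
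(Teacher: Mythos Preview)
Your argument is correct, and the disjointness step via Lemma \ref{lem_coneboundary} and Proposition \ref{prop_algcl_fundomain} is essentially the paper's. For orbit coverage, however, you take a genuinely different route. The paper argues by Galois descent from Proposition \ref{prop_algcl_fundomain}: given $\alpha\in\Pos(X)$ with trivial $W_{\overline X}$-stabiliser, the \emph{unique} $g\in W_{\overline X}$ carrying $\alpha$ into $\clMov(\overline X)\cap\Pos(\overline X)$ is automatically $G_F$-fixed (because $G_F$ preserves both $\alpha$ and the target chamber), hence $g\in R_X$; points lying on walls are then handled by a limit along a sequence in $\Pos(X)$ avoiding all hyperplanes $E^\perp$. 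Your height-minimisation of $r\mapsto q_{\overline X}(r\alpha,A)$ over $R_X$ is the classical Weyl-chamber argument: it avoids the stabiliser case-split entirely, at the price of invoking proper discontinuity of the integral orthogonal group on hyperbolic space. The paper's route extracts more directly from the already-established Proposition \ref{prop_algcl_fundomain}; yours is more self-contained over $F$ and closer in spirit to the Sterk-type argument the paper later uses in the proof of Theorem \ref{thm_conjcar0}(1).

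Two small points. Your sublevel sets $\{r\in R_X:q_{\overline X}(r\alpha,A)\le M\}$ need not be finite when $\alpha$ has infinite $R_X$-stabiliser; what you actually need, and what proper discontinuity does give, is that the \emph{orbit} $R_X\cdot\alpha$ meets the compact region $\{x\in\Pos(X):q(x,x)=q(\alpha,\alpha),\ q(x,A)\le M\}$ in finitely many points, so the minimum is attained on the orbit. And for $q_{\overline X}(E_I,A)>0$ it is cleaner, rather than appealing to the Fujiki relation, to observe that $A\in\Amp(\overline X)$ lies in the interior of the chamber $\clMov(\overline X)\cap\Pos(\overline X)$, hence $q_{\overline X}(A,E)>0$ for every prime exceptional $E$ by Proposition \ref{prop_algcl_fundomain}, and then sum over $E\in I$.
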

\begin{proof}
    We denote by $\overline{X}$ the base change $X_{\overline{F}}$, and by $G_F$ the absolute Galois group of $F$. First, we show that for every $\alpha \in \Pos(X)$, there exists $g \in R_X$ such that $g\alpha \in \clMov(X) \cap \Pos(X)$. 
    
    If the stabilizer of $\alpha$ under the action of $W_{\overline{X}}$ on $\Pic(\overline{X})_\R$ is trivial, there is a unique $g \in W_{\overline{X}}$ such that $g\alpha \in \clMov(\overline{X}) \cap \Pos(\overline{X})$ by Proposition \ref{prop_algcl_fundomain}. We claim that $g \in R_X$. Indeed, for any $\sigma \in G_F$, we have
    \[(\leftindex^\sigma g)(\alpha) = \sigma g(\sigma^{-1}(\alpha)) = \sigma(g\alpha) \in \clMov(\overline{X}) \cap \Pos(\overline{X})\]
    since $G_F$ preserves $\clMov(\overline{X}) \cap \Pos(\overline{X})$. By uniqueness of $g$, we obtain $\leftindex^\sigma g = g$, that is, $g$ lies in $R_X$. Therefore, $g \alpha \in (\Pic(\overline{X})_{\R})^{G_F} = \Pic(X)$ (see Remark \ref{rmk_invariantcommute}), and we conclude that $g\alpha \in \clMov(X) \cap \Pos(X)$ by Lemma \ref{lem_extensioncones}.
    
    If now $\alpha$ has a non-trivial stabilizer, we must have by Proposition \ref{prop_algcl_fundomain} that
    \begin{equation}\label{eq_prop404}
        \alpha \in \bigcup_{E \in \cP_{\overline{X}}} E^\perp \cap \Pos(\overline{X}),
    \end{equation}
    where $\cP_{\overline{X}}$ is the set of prime exceptional divisors of $\overline{X}$. Arguing as in \cite[Remark 8.2.3]{huybrechts2016lectures}, the above union is locally finite. Since $X$ admits an ample divisor, $\Pic(X)$ is not contained in any of the walls appearing in (\ref{eq_prop404}). This allows to construct a sequence $(\alpha_n)_{n \geq 0}$ of elements of $\Pos(X)$ tending to $\alpha$, and all lying in the interior of the same chamber of $\Pos(\overline{X})$ cut out by the walls above. From what we have said before, we deduce that there is a unique $g \in R_X$ such that $g\alpha_n \in \clMov(X) \cap \Pos(X)$ for every $n \geq 0$. By continuity, it follows that $g\alpha \in \clMov(X) \cap \Pos(X)$.
    
    Next, we show that for all non-trivial element $g \in R_X$, $\clMov(X) \cap \Pos(X)$ and $g \cdot \clMov(X) \cap \Pos(X)$ intersect only at their boundaries. Suppose that $\alpha \in \Pos(X)$ lies in $\clMov(X) \cap g\cdot \clMov(X)$ for some non-trivial element $g \in R_X$. By Proposition \ref{prop_algcl_fundomain}, $\alpha$ lies in the boundary of $\clMov(\overline{X})$, and Lemma \ref{lem_coneboundary} shows that $\alpha$ lies in the boundary of $\clMov(X)$, which concludes this proof.
\end{proof}

\subsection{The absolute movable cone conjecture}\label{section_movcone}
Before proving Theorem \ref{thm_conjcar0}.(1), we recall a few facts about the group $\Bir(X)$ of a projective variety $X$ over an algebraically closed field $L$ of characteristic zero. We denote by $\operatorname{(LNSch/L)}$ the full subcategory of $\operatorname{(Sch/L)}$ consisting of locally Noetherian schemes over $L$. In the following, we adopt the notations of \cite[Section 3]{jeremyblanc2017birational}.

\begin{defn}
    Let $X$ be a variety over an algebraically closed field $L$ of characteristic zero. We define the following functor.
    \[\Bir_{X/L} : \operatorname{(LNSch/L)} \to (\text{Set}), ~~\Bir_{X/L}(S) := 
    \left\{
    \begin{array}{c}
    \textup{S-birational transformations of $S \times X$ inducing an} \\
    \textup{isomorphism between two open subsets of $S \times X$,} \\
    \textup{whose projections on $S$ are surjective}
    \end{array}
    \right\}.
    \]
\end{defn}

\begin{lem}[Lemma 3.3 - \cite{jeremyblanc2017birational}]\label{lem_birgraph}
    Let $X$ be a variety over an algebraically closed field $L$ of characteristic zero. For every locally Noetherian scheme $S$ over $L$, we have a natural bijection
    \[\Bir_{X/L}(S) \xrightarrow{\simeq} \left\{
    \begin{array}{c}
    \textup{irreducible closed subsets $Y \subset S \times X \times X$ admitting} \\
    \textup{a dense open subset $W \subset Y$ such that the projection} \\
    \textup{$W \to S$ is surjective, and such that the two projections} \\
    \textup{$S \times X \times X \to S \times X$ restrict to open immersions $W \to S \times X$} \\
    \end{array}
    \right\}\]
    which sends $f \in \Bir_{X/L}(S)$ to $\Gamma_f := \overline{\enstq{(s, x, q(f(s,x))}{(s,x) \in \operatorname{dom}(f)}}$, where $q : S \times X \to X$ is the projection.
\end{lem}

\begin{defn}[Definition 2.1 - \cite{hanamura1987birational}]
    Let $X$ be a projective variety over an algebraically closed field $L$ of characteristic zero, and $S$ a locally Noetherian scheme over $L$. A \emph{flat family of birational transformations} of $X$ over $S$ is a closed subscheme $Z \subset S \times X \times X$ which is flat over $S$, and such that for every point $s \in S$, the fibre $Z_s$ is the graph of an element of $\Bir_X(k(s))$.
\end{defn}

\begin{prop}[Proposition 1.7 - \cite{hanamura1987birational}]
    Let $X$ be a projective variety over an algebraically closed field $L$ of characteristic $0$. The functor
    \[\Bir^{\text{flat}}_{X/L} : \operatorname{(LNSch/L)} \to (\text{Set}), ~~S \mapsto \enstq{f \in \Bir_{X/L}(S)}{\Gamma_f \text{ is a flat family of birational transformations}}\]
    is representable by a scheme $\bBir_L^{\textbf{flat}}(X)$, where $\Gamma_f$ is the graph of $f$ as defined in Lemma \ref{lem_birgraph}.
\end{prop}
In general, this scheme does not have a group scheme structure. However, we have the following result.

\begin{thm}[Theorem 2.1 - \cite{hanamura1988structure}]\label{thm_hanamura}
  Let $X$ be a non-uniruled projective variety over an algebraically closed field $L$ of characteristic zero. The following holds.

  \begin{enumerate}
      \item $\dim_L \bBir_L^{\textbf{flat}}(X) \leq \min\left\{\dim(X), q(X) \right\}$, where $q(X)$ denotes the irregularity of a non-singular model of $X$.

      \item There exists a projective variety $Y$ over $L$ (which may be taken to be non-singular), which is birationally equivalent to $X$, such that the associated scheme $\bBir^{\textbf{flat}}_L(Y)_{\text{red}}$ has a natural structure of a group scheme, locally of finite type over $L$.
  \end{enumerate}
\end{thm}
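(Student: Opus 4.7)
My plan treats the two statements separately, using non-uniruledness as the main structural input throughout. For part~(1), I establish the upper bounds $\dim X$ and $q(X)$ by distinct geometric arguments: an evaluation/Hilbert-scheme dimension count at a general point, and Albanese functoriality combined with a Lieberman-type rigidity statement. For part~(2), the plan is to pass to a refined birational model $Y$ on which composition of flat families of birational self-maps is compatible with the flat structure, endowing the reduction with a natural group scheme law.

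\textbf{Part (1).} Let $\mathcal{Z} \subset \bBir^{\textbf{flat}}_L(X) \times X \times X$ be the universal family, and let $T$ denote an irreducible component of $\bBir^{\textbf{flat}}_L(X)$. Fix a general closed point $x_0 \in X$ avoiding the indeterminacy locus of every birational map parametrized by a dense open of $T$; intersecting $\mathcal{Z}\vert_T$ with $T \times \{x_0\} \times X$ and projecting to $X$ gives a rational evaluation map $\mathrm{ev}_{x_0} \colon T \dashrightarrow X$, $\phi \mapsto \phi(x_0)$. For the bound $\dim T \leq \dim X$, the key step is to show that $\mathrm{ev}_{x_0}$ is generically finite: a positive-dimensional fiber would yield a non-trivial one-parameter family of birational self-maps all sending $x_0$ to a common target, and composing with the inverse of a fixed member of the family produces a non-trivial continuous deformation of the identity fixing $x_0$, which via a standard deformation-of-graphs argument forces a rational curve through $x_0$, contradicting non-uniruledness. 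For the bound $\dim T \leq q(X)$, fix a smooth model $\pi \colon \widetilde X \to X$ with Albanese $\alpha \colon \widetilde X \to A$, $\dim A = q(X)$. Any flat family of birational self-maps of $X$ lifts (after further blow-up) to a flat family on $\widetilde X$, and by the universal property of the Albanese — using that $A$ contains no rational curves — descends to a morphism of schemes $\bBir^{\textbf{flat}}_L(X) \to \Aut(A)$. Since $\Aut^0(A) = A$ has dimension $q(X)$, it remains to check that the kernel has a zero-dimensional identity component; this is a Lieberman-type rigidity statement, as a non-trivial continuous family inducing the trivial action on the Albanese would give global vector fields on $\widetilde X$ beyond those pulled back from $A$, which is impossible on a non-uniruled smooth variety.

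\textbf{Part (2).} The obstruction to $\bBir^{\textbf{flat}}_L(X)$ being a group scheme is that the composition of two flat families of birational self-maps need not itself be flat: compositions may introduce new exceptional loci of higher relative dimension. My plan is to pass to a model $Y$ birational to $X$ such that every element of every component of $\bBir^{\textbf{flat}}_L(Y)$ is a small modification, i.e.\ an isomorphism in codimension one. On such a $Y$, the composition of two small modifications remains a small modification, and on the reduced scheme $\bBir^{\textbf{flat}}_L(Y)_{\mathrm{red}}$ flatness of the composition can be verified on the generic fiber. A natural candidate for $Y$ is a common resolution dominating the indeterminacy loci of a representative family over each component, constructed by iteratively blowing up the loci where birational self-maps fail to be pseudo-isomorphisms; non-uniruledness is essential here to prevent ruled exceptional divisors from arising in this process, thereby keeping the construction finite. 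The hardest step, and the technical heart of the theorem, is verifying that this refinement genuinely yields flatness of composition on the reduced scheme — equivalently, that the map $\bBir^{\textbf{flat}}_L(Y)_{\mathrm{red}} \times \bBir^{\textbf{flat}}_L(Y)_{\mathrm{red}} \to \bBir^{\textbf{flat}}_L(Y)_{\mathrm{red}}$ sending $(\phi, \psi) \mapsto \phi \circ \psi$ is a well-defined morphism of schemes, with a well-defined inverse assigning $\phi \mapsto \phi^{-1}$.
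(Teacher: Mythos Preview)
The paper does not contain a proof of this statement: Theorem~\ref{thm_hanamura} is quoted from Hanamura \cite{hanamura1988structure} and used as a black box in the proof of Lemma~\ref{lem_biriso}. There is therefore nothing in the present paper to compare your proposal against; if you want to check your outline, the relevant source is Hanamura's original article.

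That said, as a standalone sketch your proposal has real gaps. In Part~(1), the step ``a positive-dimensional fibre of $\mathrm{ev}_{x_0}$ forces a rational curve through $x_0$'' is not justified: a one-parameter family of birational self-maps fixing $x_0$ gives, for a general second point $x_1$, a curve in $X$ parametrized by your curve $C\subset T$, but $C$ need not be rational, and the curve need not pass through $x_0$. Some additional input (e.g.\ a degeneration or bend-and-break type argument, or a direct tangent-space computation as in Hanamura) is required. Likewise, the Lieberman-type rigidity you invoke (``global vector fields on $\widetilde X$ beyond those pulled back from $A$ are impossible on a non-uniruled variety'') is not a standard fact in the form you state it and needs an argument. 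In Part~(2), the plan of ``iteratively blowing up until every birational self-map becomes small'' is not how Hanamura proceeds, and it is unclear why such a process terminates or why the resulting model has the claimed property for \emph{all} components simultaneously; Hanamura's construction of the model $Y$ is more delicate and is the genuine content of his theorem.
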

We can apply these general results to our case to obtain the following lemma.

\begin{lem}\label{lem_biriso}
    Let $X$ be a $\Q$-factorial primitive symplectic variety over an algebraically closed field $L$ of characteristic $0$. If $L \subset M$ is an algebraically closed extension, then the projection $X_M \to X$ induces a group isomorphism $\Bir(X) \simeq \Bir(X_M)$.
\end{lem}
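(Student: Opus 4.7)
The plan is to exploit Hanamura's representability theorem to equip $\Bir$ with the structure of a locally of finite type group scheme, and then to observe that in our setting this scheme is constant over any algebraically closed base field. First, $X$ is non-uniruled: by Lemma \ref{lem_tangent}, $K_X \sim_{\mathrm{lin}} 0$ and $X$ has canonical, hence rational, singularities. Since $\Bir$ is a birational invariant, I may replace $X$ by a smooth projective birational model $Y$ furnished by Theorem \ref{thm_hanamura}(2), so that $G := \bBir_L^{\textbf{flat}}(Y)_{\mathrm{red}}$ carries a natural structure of group scheme locally of finite type over $L$, with $G(L) = \Bir(Y) = \Bir(X)$.

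Next I bound the dimension of $G$. Because $X$ has rational singularities and $H^1(X, \Ox_X) = 0$, for any resolution $\tilde X \to X$ we have $h^1(\tilde X, \Ox_{\tilde X}) = 0$; as $Y$ is smooth projective birational to $X$, birational invariance of $h^1$ of the structure sheaf gives $q(Y) = h^1(Y, \Ox_Y) = 0$. Theorem \ref{thm_hanamura}(1) thus yields $\dim G \leq q(Y) = 0$. A reduced, $0$-dimensional group scheme locally of finite type over an algebraically closed field of characteristic zero is étale, hence a disjoint union of copies of $\Spec L$, i.e.\ a constant group scheme. In particular, for any algebraically closed extension $L \subset M$ the natural map $G(L) \to G(M)$ is bijective.

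To transfer this conclusion back to $\Bir(X_M)$, I use that representable functors commute with base change of the base, so
\[
    \bBir^{\textbf{flat}}_M(Y_M) \simeq \bBir^{\textbf{flat}}_L(Y) \times_{\Spec L} \Spec M.
\]
Since $L$ is perfect, reduced $L$-schemes remain reduced after base change to $M$, so $G_M \simeq \bBir^{\textbf{flat}}_M(Y_M)_{\mathrm{red}}$ and $G_M(M) = \Bir(Y_M) = \Bir(X_M)$. Assembling the identifications
\[
    \Bir(X) = G(L) \xrightarrow{\sim} G(M) = G_M(M) = \Bir(X_M),
\]
and noting that base change of graphs preserves composition, yields a group isomorphism; unwinding the universal properties, it coincides with the map induced by pullback of birational correspondences along the projection $X_M \to X$.

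The main difficulty I anticipate is precisely the identification of the abstract bijection $G(L) \to G(M)$ produced by constancy of $G$ with the geometric base change map on birational self-maps, together with checking that the fixed birational equivalence $X \dashrightarrow Y$ (and its base change $X_M \dashrightarrow Y_M$) intertwines the two ``base change'' arrows on $\Bir$. Both points reduce to unpacking the universal property of the functor $\bBir^{\textbf{flat}}$ and the functoriality of the graph construction under base change, which is routine but requires care.
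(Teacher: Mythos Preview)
Your proposal is correct and follows essentially the same route as the paper: show non-uniruledness, invoke Hanamura's theorem to obtain a zero-dimensional group scheme $\bBir^{\textbf{flat}}_L(Y)_{\mathrm{red}}$ on a smooth birational model, deduce it is constant, and read off the bijection on points. The only cosmetic differences are that the paper establishes non-uniruledness by spreading out to $\C$ rather than appealing directly to $K_X\sim 0$ with canonical singularities, and that it bounds the dimension via $q(X)$ together with \cite[Corollary 2.8]{hanamura1987birational} rather than computing $q(Y)$ directly; neither changes the substance of the argument.
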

\begin{proof}
    First, we show that $X$ is non-uniruled. Suppose by contradiction that there exist a variety $Y$ over $L$ and a dominant rational map $\phi : Y \times \P^1_L \dashrightarrow X$. Take a finitely generated subfield $L' \subset L$ such that $Y$, $\P^1_L$, $X$ and $\phi$ are defined over $L'$, and fix an embedding $\overline{L'} \subset \C$. The base change $\phi_\C : Y_\C \times \P^1_\C \dashrightarrow X_\C$ is a dominant rational map, which means that $X_\C$ is uniruled. Note that $X_\C$ is a projective primitive symplectic variety by Lemma \ref{lem_extofPSV} and Lemma \ref{lem_descentofpsvfg}. Let $\pi : \tilde{X} \to X_\C$ be a resolution of singularities. Since $X_\C$ is uniruled, $\tilde{X}$ is also uniruled, and therefore $h^0(\tilde{X}, \Ox_{\tilde{X}}(K_{\tilde{X}})) = 0$. This contradicts the fact that the pullback of the non-zero symplectic $2$-form on $X_\C$ extends to a regular $2$-form on $\tilde{X}$. Hence, $X$ is non-uniruled.
    
    Next, we apply Theorem \ref{thm_hanamura} to $X$. Given that $h^1(X,\Ox_X) = 0$ and that $X$ has rational singularities by Lemma \ref{lem_tangent} and \cite[Théorème 1]{elkik1981rationalite}, it follows that the dimension of $\bBir_L^{\textbf{flat}}(X)$ is zero. Let $Y$ be a smooth projective variety birationally equivalent to $X$ as in Theorem \ref{thm_hanamura}. By \cite[Exposé VI.B, Corollaire 1.6.1]{SGA3I}, $\bBir_L^{\textbf{flat}}(Y)_{\text{red}}$ is smooth over $L$. Since $\dim \bBir_L^{\textbf{flat}}(X) = \dim \bBir_L^{\textbf{flat}}(Y)$ by \cite[Corollary 2.8]{hanamura1987birational}, the group scheme $\bBir_L^{\textbf{flat}}(Y)_{\text{red}}$ is of dimension $0$ and is therefore étale over $L$, equivalently it is a constant group scheme. It follows that the morphism $\Spec M \to \Spec L$ induces an isomorphism
    \[\bBir_L^{\textbf{flat}}(Y)_{\text{red}}(M) \simeq \bBir_L^{\textbf{flat}}(Y)_{\text{red}}(L),\]
    which simplifies to an isomorphism $\Bir(Y_M) \simeq \Bir(Y)$ by definition of $\Bir^{\text{flat}}_{Y/L}$. Since $Y$ is birationally equivalent to $X$, these two groups are respectively isomorphic to $\Bir(X_M)$ and $\Bir(X)$, and this completes the proof.
\end{proof}
Recall that if $X$ and $X'$ are projective primitive symplectic varieties over a field $L$ of characteristic zero with $\Q$-factorial terminal singularities, then a birational map $\phi : X \dashrightarrow X'$ is an isomorphism in codimension one by \cite[Corollary 3.54]{Kollár_Mori_1998}. Therefore, if $X' = X$, then $\phi$ induces an isometry $\phi^* \in O(\Pic(X)_{\tf})$. In order to prove the birational cone conjecture, we need to show that $\Bir(X_{\overline{F}})$ is a finitely generated group. The strategy consists in adapting \cite[Lemma 2 and Theorem 2]{boissiere2012note} to the singular case, relying on the results of \cite{lehn2024morrison} to generalize \cite[Lemma 2.1 and Proposition 2.2]{sterk1985finiteness}.

\begin{prop}[Corollary 3.5 - \cite{bakker2022global}]
    Let $X$ be a complex projective primitive symplectic variety. Then $H^2(X,\Z)_{\tf}$ carries a pure Hodge structure.
\end{prop}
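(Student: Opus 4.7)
The plan is to obtain the pure Hodge structure on $H^2(X,\Z)_{\tf}$ by transfer from a resolution. Fix a resolution of singularities $\pi : Y \to X$ with $Y$ smooth projective. Deligne's theory provides a functorial mixed Hodge structure on $H^2(X,\Q)$, with weights at most $2$ because $X$ is proper, and for which the pullback $\pi^* : H^2(X,\Q) \to H^2(Y,\Q)$ is a morphism of mixed Hodge structures whose target is pure of weight $2$. It therefore suffices to show that $\pi^*$ is injective in degree two: the image is then a sub-Hodge structure of a pure weight-two Hodge structure, hence itself pure of weight two, and restricting this structure along the natural embedding $H^2(X,\Z)_{\tf} \hookrightarrow H^2(X,\Q)$ produces the asserted integral pure Hodge structure on $H^2(X,\Z)_{\tf}$.

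For the injectivity of $\pi^*$ in degree two I would invoke the rationality of the singularities of $X$: by Lemma~\ref{lem_tangent}, $X$ has canonical, and in particular rational, singularities, so $R\pi_*\Ox_Y \cong \Ox_X$. A standard consequence, via the decomposition theorem applied to $\pi$ (or equivalently the Leray spectral sequence combined with local cohomological vanishing for rational singularities), is that $\pi^*$ is injective on $H^i$ in the low degrees relevant here, and that its image is saturated as a sub-Hodge structure. In particular, $H^2(X,\Q)$ inherits a pure Hodge structure of weight $2$ from $H^2(Y,\Q)$.

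An alternative hands-on approach would be to assemble the Hodge pieces directly: the $(2,0)$-piece is identified with $H^0(X,\Omega^{[2]}_{X/\C})$ via the result of Schwald quoted above, the $(0,2)$-piece is its complex conjugate, and the vanishing $H^1(X,\Ox_X)=0$ built into the definition of a primitive symplectic variety eliminates potential mixed weight contributions; the $(1,1)$-piece is then the orthogonal complement with respect to the BBF pairing. Compatibility of this decomposition with a $\Q$-structure follows from its compatibility under $\pi^*$ with the Hodge decomposition of $H^2(Y,\C)$.

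The main obstacle is the purity statement itself, namely ruling out weight-$0$ and weight-$1$ contributions in Deligne's mixed Hodge structure on $H^2(X,\Q)$. In the smooth case this is automatic; in the singular setting it is precisely the rationality of the singularities that forces the vanishing of these extra weights. Once purity is in hand, the lattice $H^2(X,\Z)_{\tf}$ inherits its pure Hodge structure, via $\pi^*$, from the integral lattice $H^2(Y,\Z)_{\tf}$, and compatibility with the BBF form follows from the functoriality of the constructions involved.
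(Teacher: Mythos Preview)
The paper does not give its own proof of this proposition: it is quoted from \cite{bakker2022global} and used as a black box, so there is no argument in the present paper to compare yours against.

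Your overall strategy---deduce purity by showing that $\pi^*:H^2(X,\Q)\to H^2(Y,\Q)$ is injective for a resolution $\pi$---is the standard one and is sound in outline. The soft spot is the sentence asserting injectivity as ``a standard consequence, via the decomposition theorem \ldots\ or equivalently the Leray spectral sequence combined with local cohomological vanishing for rational singularities''. Two points need more care here. First, rationality of the singularities is a statement about $R^i\pi_*\Ox_Y$, not about $R^i\pi_*\Q_Y$, so it does not by itself control the Leray differentials for the constant sheaf. Second, the decomposition theorem splits $R\pi_*\Q_Y$ into shifted intersection complexes, one summand being $IC_X$, but it does not split the map $\Q_X\to R\pi_*\Q_Y$; what one gets for free is that $IH^2(X,\Q)$ is pure and sits inside $H^2(Y,\Q)$. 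One still has to identify $H^2(X,\Q)$ with $IH^2(X,\Q)$, or otherwise kill $W_1H^2(X,\Q)$ directly, and it is here that the combination of rational (hence Du Bois) singularities with the specific hypotheses $h^1(X,\Ox_X)=0$ and $h^0(X,\Omega^{[2]}_X)=1$ is actually used. Your ``hands-on'' alternative gestures in this direction, but as written it assembles the $(p,q)$-pieces without checking that they underlie a rational structure compatible with the weight filtration, which is again the purity statement in disguise.
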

For a complex projective primitive symplectic variety $X$, the group of locally trivial monodromy operators preserving the pure Hodge structure on $H^2(X,\Z)_{\tf}$ as in \cite[Definition 2.16]{lehn2024morrison} will be denoted by $\monlthdg(X)$ .

\begin{thm}[Theorem 4.2 - \cite{lehn2024morrison}]\label{thm_monodromop}
    Let $\phi : X \dashrightarrow X'$ be a birational map between complex projective primitive symplectic varieties with $\Q$-factorial terminal singularities. Then $\phi^* \in \monlthdg(X)$.
\end{thm}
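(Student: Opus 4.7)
The plan is to first reduce the statement to the construction of a locally trivial deformation that realises $\phi$ as a parallel transport operator, and then verify Hodge-theoretic compatibility almost for free. Since $X$ and $X'$ are $\Q$-factorial with terminal singularities, the Kollár--Mori result already invoked in the paper ensures that $\phi$ is an isomorphism in codimension one; in particular, pushforward and pullback of reflexive sheaves along $\phi$ are well defined, and the map $\phi^{*}$ on $H^{2}(X',\Z)_{\tf}\to H^{2}(X,\Z)_{\tf}$ is an isometry with respect to the BBF form. The heart of the proof is therefore to show that this isometry is a locally trivial monodromy operator, i.e.\ that it arises from monodromy of a locally trivial family of primitive symplectic varieties.

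The first concrete step would be to pick an ample class $\alpha'$ on $X'$, set $\alpha=\phi^{*}\alpha'\in\clMov(X)$, and run a small $\Q$-factorial modification / MMP-with-scaling argument to connect $(X,\alpha)$ to $(X',\alpha')$ by a finite chain of flops. Equivalently, one can invoke the theory of marked moduli spaces of primitive symplectic varieties of Bakker--Lehn: $X$ and $X'$ correspond to two points over the same period, and any two birational $\Q$-factorial terminal models share a connected component of the relevant fibre of the period map. The key point is to produce a smooth one-parameter locally trivial family $\cX\to T$, together with a birational self-map or an identification of generic fibres with $X$, in which $X$ and $X'$ appear as special fibres and $\phi$ is the limit of an actual isomorphism of general fibres.

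Once such a family is in hand, parallel transport in the local system $R^{2}(\cX/T)_{*}\Z$ defines a monodromy operator $g\in\monlt(X)$ which, by construction, agrees with $\phi^{*}$ on $H^{2}(X',\Z)_{\tf}$; this places $\phi^{*}$ in $\monlt(X)$. The Hodge-theoretic part is then automatic: under the identification $H^{2,0}\cong H^{0}(X,\Omega_{X}^{[2]})$ coming from Schwald's comparison theorem, the symplectic form $\sigma'$ on the regular locus of $X'$ pulls back under $\phi|_{X^{\reg}\dashrightarrow X'^{\reg}}$ (an isomorphism in codimension one) to the symplectic form $\sigma$ on $X^{\reg}$, so $\phi^{*}H^{2,0}(X')=H^{2,0}(X)$; taking orthogonal complements with respect to the BBF form gives $\phi^{*}H^{1,1}(X')=H^{1,1}(X)$, and conjugation handles $H^{0,2}$. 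Hence $\phi^{*}\in\monlthdg(X)$.

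The main obstacle is clearly the construction of the connecting locally trivial family. In the smooth hyperk\"ahler case this is provided by Huybrechts's theorem on birational models sharing a twistor deformation, but in the singular setting one has to replace the twistor construction either by the locally trivial deformation theory of $\Q$-factorial terminal primitive symplectic varieties (and its compatibility with the period map \`a la Bakker--Lehn) or by an MMP argument producing each intermediate flop as a specialisation. Assuming such a framework is available for primitive symplectic varieties with $\Q$-factorial terminal singularities, the rest of the argument is formal; without it, one is essentially forced to develop the singular analogue of Huybrechts's deformation theorem along the way, which would be the most delicate piece of the proof.
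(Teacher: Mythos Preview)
The paper does not prove this statement; it is quoted verbatim as Theorem~4.2 of \cite{lehn2024morrison} and used as a black box. There is therefore no ``paper's own proof'' to compare your proposal against.

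That said, your sketch is a reasonable outline of how the cited result is actually obtained in \cite{lehn2024morrison}. The argument there does proceed by reducing to a sequence of flops (via an MMP with scaling, using that the class $\phi^{*}\alpha'$ is big and movable) and then showing that each individual flop is a locally trivial parallel transport operator by means of the Bakker--Lehn locally trivial deformation theory and the global Torelli theorem for primitive symplectic varieties. Your identification of the ``main obstacle'' is accurate: the nontrivial input is precisely the singular analogue of Huybrechts's theorem that inseparable birational models are connected by a locally trivial family, and this is supplied in \cite{bakker2022global} and exploited in \cite{lehn2024morrison}. The Hodge compatibility step is indeed automatic once $\phi^{*}\in\monlt(X)$, for the reason you give.

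One small caution: in your sketch you describe producing ``a smooth one-parameter locally trivial family $\cX\to T$ \ldots\ in which $X$ and $X'$ appear as special fibres and $\phi$ is the limit of an actual isomorphism of general fibres''. For a single flop this is correct, but for a general $\phi$ one does not get a single such family directly; rather, one factors $\phi$ into flops first and concatenates the resulting parallel transports. Your proposal mentions both routes but blurs the logical order slightly. Otherwise the plan is sound.
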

For a complex projective primitive symplectic varieties with $\Q$-factorial terminal singularities, fix a generator $\sigma \in H^{2,0}(X)$ and consider the group $\Bir^0(X) := \enstq{f \in \Bir(X)}{f^* \sigma = \sigma}$ of birational transformations preserving $\sigma$. For any birational automorphism $f \in \Bir(X)$, there exists a unique $\chi(f) \in \C^\times$ such that $f^* \sigma = \chi(f) \cdot \sigma$ by Theorem \ref{thm_monodromop}. This defines a character $\chi : \Bir(X) \to \C^\times$ such that $\Bir^0(X) = \ker(\chi)$; in particular, $\Bir^0(X)$ is a normal subgroup of $\Bir(X)$.

\begin{lem}\label{lem_cyclic}
    Let $X$ be a complex projective primitive symplectic variety with $\Q$-factorial terminal singularities and $b_2(X) \geq 5$. The quotient $\Bir(X)/\Bir^0(X)$ is a finite cyclic group. 
\end{lem}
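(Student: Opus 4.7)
The plan is to show that the image of $\chi : \Bir(X) \to \mathbb{C}^\times$ is finite. Since any finite subgroup of $\mathbb{C}^\times$ is cyclic (being a subgroup of the roots of unity), the conclusion will follow immediately from $\Bir(X)/\Bir^0(X) \hookrightarrow \mathbb{C}^\times$. A key tool is Theorem \ref{thm_monodromop}, which guarantees that for every $f \in \Bir(X)$, the pullback $f^\ast$ lies in $\monlthdg(X)$, hence preserves both the integral structure on $H^2(X,\mathbb{Z})_{\tf}$, the Hodge decomposition, and the BBF form $q_X$. In particular, pairing against $\bar\sigma$ shows
\[
\lvert\chi(f)\rvert^2\, q_X(\sigma,\bar\sigma) \;=\; q_X(f^\ast\sigma, \overline{f^\ast\sigma}) \;=\; q_X(\sigma,\bar\sigma),
\]
and since $q_X(\sigma,\bar\sigma) > 0$ by Definition/Proposition \ref{prop_bbf_analytic}, we deduce $\lvert\chi(f)\rvert = 1$.

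The core of the argument is to control the arithmetic of $\chi(f)$ via the transcendental lattice. First I would introduce $T(X) := \Pic(X)_{\tf}^{\perp} \subset H^2(X,\mathbb{Z})_{\tf}$, which is preserved by $f^\ast$ since $f^\ast$ preserves both the integral lattice and $\Pic(X)_{\tf} = H^2(X,\mathbb{Z})_{\tf} \cap H^{1,1}(X)$. Using Definition/Proposition \ref{prop_bbf_analytic}, the form $q_X$ on $T(X)_\mathbb{R}$ has the orthogonal decomposition
\[
T(X)_\mathbb{R} \;=\; \bigl(H^{2,0}(X) \oplus H^{0,2}(X)\bigr)_\mathbb{R} \;\oplus\; \bigl(T(X)_\mathbb{R} \cap H^{1,1}(X)\bigr),
\]
where the first summand is positive definite of dimension $2$, and the second is negative definite (the three positive directions of $q_X$ on $H^2(X,\mathbb{R})$ are captured by an ample class in $\Pic(X)_\mathbb{R}$ together with $(H^{2,0}\oplus H^{0,2})_\mathbb{R}$, by the Hodge index theorem / signature count). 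Because $f^\ast$ preserves the Hodge structure, it preserves each summand and acts there by an isometry of a definite form; thus $f^\ast\vert_{T(X)}$ sits inside a compact orthogonal group tensor-product, so \emph{all} its complex eigenvalues have modulus $1$.

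To finish, I would note that $f^\ast\vert_{T(X)_\mathbb{Z}}$ is given by an integer matrix of size $r := \rank T(X) \leq b_2(X)$, so its characteristic polynomial $P_f(t) \in \mathbb{Z}[t]$ has degree $r$ and $\chi(f)$ is one of its roots. Every Galois conjugate of $\chi(f)$ is therefore a root of $P_f$, hence an eigenvalue of $f^\ast\vert_{T(X)}$, and in particular has modulus $1$. Kronecker's theorem then yields that $\chi(f)$ is a root of unity, of degree at most $b_2(X)$. Since there are only finitely many such roots of unity, the image $\chi(\Bir(X)) \subset S^1$ is finite, hence a finite cyclic subgroup of $\mathbb{C}^\times$, which gives the statement.

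The step I expect to be the main obstacle is the signature analysis ensuring that $f^\ast\vert_{T(X)_\mathbb{R}}$ really sits in a product of compact orthogonal groups, because this rests on correctly identifying the positive and negative directions of $q_X$ restricted to $T(X)_\mathbb{R}$ and on checking that $f^\ast$ respects the orthogonal decomposition; once this is in place, the Kronecker argument is formal.
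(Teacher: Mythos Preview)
Your proposal is correct and follows essentially the same route as the paper: both restrict $f^\ast$ to the transcendental lattice $T(X) = \Pic(X)_{\tf}^{\perp}$, use the signature decomposition $T(X)_\R = (H^{2,0}\oplus H^{0,2})_\R \oplus (T(X)_\R \cap H^{1,1})$ into definite pieces to force all eigenvalues to have modulus one, and then invoke Kronecker's theorem together with the degree bound $\varphi(n) \leq \rank T(X)$ to conclude finiteness of $\chi(\Bir(X))$. The only cosmetic differences are that the paper defines $T(X)$ intrinsically as the minimal sub-Hodge structure containing $H^{2,0}$ (then identifies it with $\Pic(X)_{\tf}^{\perp}$), and that your preliminary direct computation of $\lvert\chi(f)\rvert = 1$ is redundant once the signature argument is in place.
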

\begin{proof}
We adapt the arguments of \cite[Lemma 2]{boissiere2012note}. Denote by $T(X) \subset H^2(X,\Z)$ the transcendental “lattice” of $X$, i.e. the smallest integral Hodge structure such that $T(X)_\C := T(X) \otimes_\Z \C$ contains $H^{2,0}(X)$. By \cite[Theorem 8]{schwald2016low} and arguing as in \cite[Lemma 3.1]{huybrechts2016lectures}, we have $T(X)_{\tf} = \Pic(X)_{\tf}^{\perp}$. Set $T(X)_\R := T(X) \otimes_\Z \R$ and $E := \left(H^{2,0}(X) \oplus H^{0,2}(X)\right) \cap H^2(X,\R)$. Denote by $\tau$ the rank of $T(X)$. By definition of the transcendental lattice and by Definition/Proposition \ref{prop_bbf_analytic}, $(T(X)_{\tf}, q_X)$ is non-degenerate and is of signature $(2, \tau-2)$. Furthermore, we have an orthogonal decomposition
\[T(X)_\R = E \oplus \left(T(X)_\R \cap H^{1,1}(X)\right)\]
such that $q_X$ is positive on $E$, and negative on $T(X)_\R \cap H^{1,1}(X)$. For $f \in \Bir(X)$, the induced isometry $f^* \in O(T(X)_\R)$ preserves this decomposition and restricts to an orthogonal transformation on each summand. Consequently, all eigenvalues of $f^*_\C \in O(T(X)_\C)$ have modulus $1$. Since they are algebraic integers, they are roots of unity so that the minimal polynomial of $f^*$ is a cyclotomic polynomial $\Phi_n$ for some integer $n$. As $\Phi_n$ divides the characteristic polynomial of $f^*$, the Euler number of $n$ is smaller or equal to $\tau$. This shows that $\chi(\Bir(X)) \subset \C^\times$ is a finite group.
\end{proof}

\begin{lem}\label{lem_birfg}
    Let $L$ be an algebraically closed field of characteristic $0$ and $X$ a projective primitive symplectic variety over $L$ with $\Q$-factorial terminal singularities and $b_2(X) \geq 5$. The birational group $\Bir(X)$ of $X$ is finitely generated.
\end{lem}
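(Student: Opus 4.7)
The plan is to adapt the argument of Boissière-Sarti \cite[Lemme 2, Theorem 2]{boissiere2012note} from the smooth hyperkähler case to our singular setting, by combining the Hodge-theoretic input of \cite{lehn2024morrison} with the classical finite generation of arithmetic groups. As a first reduction, Corollary \ref{cor_descentofPSVtoQ} descends $X$ to a finitely generated subfield $L'\subset L$, and then Lemma \ref{lem_biriso} applied twice yields canonical isomorphisms $\Bir(X)\simeq\Bir(X_{\overline{L'}})\simeq\Bir(X_\C)$ after fixing an embedding $\overline{L'}\hookrightarrow\C$; this reduces the problem to the case $L=\C$. As a second reduction, Lemma \ref{lem_cyclic} shows that $\Bir(X)/\Bir^0(X)$ is finite cyclic, so it suffices to show that the subgroup $\Bir^0(X)$ of symplectic birational transformations is finitely generated.

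The heart of the proof is the analysis of the pullback representation
\[
\rho : \Bir^0(X) \longrightarrow O(H^2(X,\Z)_{\tf}, q_X),
\]
which by Theorem \ref{thm_monodromop} is well-defined and factors through $\monlthdg(X)$. I expect two things to hold. First, $\ker\rho$ should be finite: following \cite[Lemme 2]{boissiere2012note}, an element $f\in\ker\rho$ preserves the Hodge structure and every ample class on a suitable terminal $\Q$-factorial birational model of $X$, and hence extends to a biregular automorphism of a polarized variety, whose automorphism group is finite by Matsusaka-Mumford. Second, the image $\rho(\Bir^0(X))$ should have finite index in $\monlthdg(X)$: the cosets in $\monlthdg(X)/\rho(\Bir^0(X))$ should parametrize Hodge isometries that are realized by a birational map from $X$ to another member of the same locally trivial deformation class, and by a Torelli-type statement in the style of \cite{lehn2024morrison} combined with the fact that, up to birational equivalence, there are only finitely many such members, this set should be finite.

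Once both statements are granted, the conclusion follows from Borel's theorem \cite{borel1962arithmetic}: the hypothesis $b_2(X)\geq 5$ ensures that $(H^2(X,\Z)_{\tf},q_X)$ has signature $(3,b_2(X)-3)$ with real rank at least $2$, so $O(H^2(X,\Z)_{\tf},q_X)$ is an arithmetic subgroup of a semisimple Lie group of non-compact type and is therefore finitely generated, as is any of its finite-index subgroups. Combined with the finiteness of $\ker\rho$, this shows that $\Bir^0(X)$ is finitely generated, and hence so is $\Bir(X)$. The main obstacle is establishing the finite-index statement for $\rho$, which requires extending Markman's monodromy/Torelli-type results from the smooth hyperkähler case to projective primitive symplectic varieties with terminal $\Q$-factorial singularities; the Hodge-monodromy machinery developed in \cite{lehn2024morrison}, already used in the proof of Theorem \ref{thm_monodromop}, should provide the right framework for this.
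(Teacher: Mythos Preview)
Your reduction to $L=\C$ and your use of Lemma~\ref{lem_cyclic} to pass to $\Bir^0(X)$ match the paper's first two steps, and your claim that $\ker\rho$ is finite is correct and proved in the paper by the same polarized-automorphism argument. The genuine problem is your second expectation, that $\rho(\Bir^0(X))$ has finite index in $\monlthdg(X)$. This is false whenever the Weyl group $W_X$ generated by reflections in prime exceptional divisors is infinite (already for K3 surfaces with infinitely many $(-2)$-curves). The Torelli-type results of \cite{lehn2024morrison} give a semidirect product decomposition $\monlthdg(X)\simeq W_X\rtimes\monltbir(X)$ (after restriction to the Picard lattice this is \cite[Lemma~6.3, Theorem~6.4]{lehn2024morrison}), so the image of $\Bir(X)$ is a \emph{complement} to $W_X$, not a finite-index subgroup. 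Your heuristic that the cosets parametrize finitely many birational models confuses two different finiteness statements: the finiteness of $\Q$-factorial terminal models controls the $\Bir(X)$-orbits of nef chambers inside the movable cone, not the index of $\monltbir(X)$ in $\monlthdg(X)$.

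There is also a secondary slip in your final step: $\monlthdg(X)$ is \emph{not} of finite index in $O(H^2(X,\Z)_{\tf},q_X)$, because preserving the Hodge structure is a nontrivial constraint, so Borel's theorem does not apply directly as you wrote. The paper repairs both issues at once by working on the Picard lattice: the restriction map $\monlthdg(X)\to O(\Pic(X)_{\tf})$ has image $\Gamma$ of finite index by \cite[Lemma~6.3(1)]{lehn2024morrison}, so $\Gamma$ and its arithmetic subgroup $\Gamma_T$ (coming from operators trivial on $T(X)$) are finitely generated. The semidirect product $\Gamma_T\simeq\rho(W_X)\rtimes\Gamma_{T,\Bir}$ then exhibits $\Gamma_{T,\Bir}$ as a \emph{quotient} of $\Gamma_T$, hence finitely generated; and $\Gamma_{T,\Bir}$ is identified with $\Bir^0(X)$ modulo the finite kernel. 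The key conceptual point you are missing is that one extracts finite generation of $\Bir^0(X)$ from a quotient, not a finite-index subgroup.
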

\begin{proof}
    Let $L_0 \subset L$ be a finitely generated subfield such that $X$ is defined over $L_0$ and denote by $X_0$ (respectively $X_{0,\C}$) the corresponding primitive symplectic variety over $L_0$ (respectively the base change of $X_0$ to $\C$). From Lemma \ref{lem_biriso}, we know that $\Bir(X) \simeq \Bir(X_{0,\C})$, so we can assume that $L = \C$. Thanks to Lemma \ref{lem_cyclic}, in order to show that $\Bir(X)$ is finitely generated, it is sufficient to show that $\Bir^0(X)$ is finitely generated. Consider the restriction morphism $\rho : \monlthdg \to O(\Pic(X)_{\tf})$ as well as
    \[\cC^{\#} := \Pos(X) \cap \Pic(X)_{\tf}, ~\text{ and }~ O^{\#}(\Pic(X)_{\tf}) := \enstq{g \in O(\Pic(X)_{\tf})}{g(\cC^{\#}) = \cC^{\#}}.\]
    By \cite[Lemma 6.3.(1)]{lehn2024morrison}, the image $\Gamma$ of $\rho$ is a finite index subgroup of $O^{\#}(\Pic(X)_{\tf})$ and is in particular finitely generated by \cite[Theorem 6.12]{borel1962arithmetic}. Using the notation from the proof of Lemma \ref{lem_cyclic}, define $\monlthdg(X)_T := \enstq{\phi \in \monlthdg(X)}{\phi \vert_{T(X)} = \id}$ and $\Gamma_T = \rho(\monlthdg(X)_T) \subset \Gamma$. The group $\Gamma_T$ is an arithmetic subgroup of $\Gamma$, so is finitely generated. Let $\cC \cP_X \subset \Pic(X)_\Q$ be the set of classes of prime exceptional divisors of $X$ and define
    \[\begin{array}{rcl}
    \Gamma_{\Bir} & := & \enstq{g \in \Gamma}{g(\cC \cP_X) = \cC \cP_X}, \\
    \Gamma_{T,\Bir} & := & \enstq{g \in \Gamma_T}{g(\cC \cP_X) = \cC \cP_X}.
    \end{array}\]
    By \cite[Lemma 6.3.(3), Theorem 6.4.(3)]{lehn2024morrison}, we have $\Gamma \simeq \rho(W_X) \rtimes \Gamma_{\Bir}$. Since $T(X)_{\tf}$ is the orthogonal complement of $\Pic(X)_{\tf}$ in $H^2(X,\Z)_{\tf}$, elements of $W_X$ act trivially on $T(X)_{\tf}$, therefore $\Gamma_T \simeq \rho(W_X) \rtimes \Gamma_{T,\Bir}$. Consequently, $\Gamma_{T,\Bir}$ is finitely generated as a quotient of the finitely generated group $\Gamma_T$. By \cite[Lemma 6.3.(2) and (3), Theorem 6.4.(3)]{lehn2024morrison}, we have $\ker(\rho) \subset \monltbir(X)$, where $\monltbir(X)$ is the subgroup of $\monlthdg$ of locally trivial monodromy operators induced by birational transformations of $X$, and moreover $\Gamma_{\Bir} \simeq \monltbir(X)/\ker(\rho)$. This gives us the following identifications
    \[
    \begin{array}{rcl}
        \Gamma_{T,\Bir} & \simeq & \enstq{g \in \monltbir(X)}{g\vert_{T(X)_{\tf}} = \id} / \enstq{g \in \monltbir(X)}{g\vert_{\Pic(X)_{\tf}} = \id}\\
                        & \simeq &  \enstq{f \in \Bir(X)}{f^*\vert_{T(X)_{\tf}} = \id} / \enstq{f \in \Bir(X)}{f^*\vert_{H^2(X,\Z)_{\tf}} = \id}.
    \end{array}
    \]
    Since $X$ is projective, we have $\Bir^0(X) \simeq \enstq{f \in \Bir(X)}{f^*\vert_{T(X)} = \id}$. Since $\Gamma_{T,\Bir}$ is finitely generated, if we show that
    \[\enstq{f \in \Bir(X)}{f^*\vert_{H^2(X,\Z)_{\tf}} = \id}\]
    is finitely generated, then we are done. In fact, this group is finite by \cite[Proposition 2.4]{oguiso2014autcy}. Indeed, let $H$ be a very ample line bundle on $X$ and let $f \in \Bir(X)$ be a birational transformation acting trivially on $H^2(X,\Z)$. In particular, we have $f^*c_1(H) = c_1(H)$, so $f \in \Aut(X)$. Let
    \[\widetilde{G} := \enstq{f \in \Aut(X)}{f^*c_1(H) = c_1(H)},\]
    so that $\enstq{f \in \Bir(X)}{f^*\vert_{H^2(X,\Z)} = \id} \subset \widetilde{G}$. If we show that $\widetilde{G}$ is finite, we are done. Let $X \hookrightarrow \P^N$ be the embedding defined by $\abs{H}$, where $N = \dim \abs{H}$. Denote by $[X]$ the Zariski-closed point of $\Hilb_{\P^N}$ corresponding to the embedding we have just written. It follows that $\widetilde{G} \subset \PGL(N)$ and that $\widetilde{G}$ is the stabilizer of $[X]$ under the natural action of $\PGL(N)$ on $\Hilb_{\P^N}$. Since this action is Zariski-continuous, it follows that $\widetilde{G}$ is Zariski-closed in $\PGL(N)$. But now $\PGL(N)$ is affine noetherian, so is $\widetilde{G}$, and since $\dim \Aut(X) = h^0(X, \cT_X) = 0$ by \cite[Lemma 4.6]{bakker2022global}, we have $\dim \widetilde{G} = 0$ as well. It follows that $\widetilde{G}$ is finite, and that completes the proof.
\end{proof}
For a projective primitive symplectic variety $X$ over $F$, denote by $\overline{X}$ the base change $X_{\overline{F}}$ of $X$ over $\overline{F}$. Note that the action of $\Bir(X)$ on $O(\Pic(\overline{X})_{\tf})$ by conjugation fixes $R_X$. This gives an action of $\Bir(X)$ on $R_X$, and a morphism from the associated semi-direct product $\Bir(X) \ltimes R_X$ to $O(\Pic(\overline{X})_{\tf})$. Since the two factors of the semi-direct product fix $\Pic(X)_{\tf}$, we obtain a morphism $\rho : \Bir(X) \ltimes R_X \to O(\Pic(X)_{\tf})$. The following proposition is the analogue, in the singular setting, of \cite[Proposition 4.1.1]{takamatsu2025finiteness} 

\begin{prop}\label{prop_imagerho}
    Let $F$ be a field of characteristic $0$ and $X$ a projective primitive symplectic variety with $b_2(X) \geq 5$ such that $\overline{X} := X_{\overline{F}}$ has $\Q$-factorial terminal singularities. The image of $\rho$ is a finite index subgroup and the kernel of $\rho$ is a finite subgroup of $\Aut(X) \subset \Bir(X)$.
\end{prop}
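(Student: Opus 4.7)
The first step is to show that the kernel reduces to the subgroup of $\Aut(X)$ acting trivially on $\Pic(X)_{\tf}$. For $(f,r) \in \ker(\rho)$, the relation $f^* \circ r = \id$ on $\Pic(X)_{\tf}$ forces $r^{-1} = f^*$. Since $X$ is $\Q$-factorial and terminal, the birational self-map $f$ is an isomorphism in codimension one by \cite[Corollary 3.54]{Kollár_Mori_1998}, so $f^*$ preserves $\clMov(X)$; as an isometry of the BBF form it also preserves $\Pos(X)$. Therefore $r^{-1}$ preserves $\clMov(X) \cap \Pos(X)$, which by Proposition \ref{prop_tak404_sing} is a fundamental domain for the action of $R_X$ on $\Pos(X)$, and only the identity of $R_X$ can stabilize a fundamental domain setwise. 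Hence $r = \id$ and $f^* = \id$ on $\Pic(X)_{\tf}$. For any ample class $H$, we then have $f^*H \equiv H$ modulo torsion, so $f^*H$ remains ample; choosing $n$ sufficiently divisible so that $nH$ is very ample and $n$ kills the torsion subgroup gives $f^*(nH) = nH$ exactly, and the standard argument comparing the embeddings $\phi_{|nH|}$ and $\phi_{|nH|} \circ f$ forces $f$ to be a regular automorphism. Finiteness of $\enstq{f \in \Aut(X)}{f^*|_{\Pic(X)_{\tf}} = \id}$ is then obtained exactly by the Hilbert scheme/$\PGL$ argument at the end of the proof of Lemma \ref{lem_birfg}, after reducing to $\C$ via Corollary \ref{cor_descentofPSVtoQ} and Lemma \ref{lem_biriso}.

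\textbf{Image.} For the finite-index claim, I reduce successively to $F = \overline{F}$ and then to $F = \C$. By Remark \ref{rmk_invariantcommute} and Proposition \ref{prop_pic_brauer}, the lattice $\Pic(X)_{\tf}$ has finite index in $\Pic(\overline{X})_{\tf}^{G_F}$, so $O(\Pic(X)_{\tf})$ and $O\bigl(\Pic(\overline{X})_{\tf}^{G_F}\bigr)$ are commensurable. Since $R_X = W_{\overline{X}}^{G_F}$ by definition, and any $F$-birational self-map of $X$ base-changes to a $G_F$-invariant element of $\Bir(\overline{X})$, it suffices to bound the $G_F$-invariant image of the analogous map over $\overline{F}$. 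Applying Corollary \ref{cor_descentofPSVtoQ}, Lemma \ref{lem_piciso} and Lemma \ref{lem_biriso}, we may further assume $F = \overline{F} = \C$. In that setting, \cite[Lemma 6.3 and Theorem 6.4]{lehn2024morrison} assert that the restriction morphism $\monlthdg(X) \to O(\Pic(X)_{\tf})$ has finite-index image in $O^{\#}(\Pic(X)_{\tf})$, decomposing as $\rho(W_X) \rtimes \rho(\monltbir(X))$. Since $\monltbir(X)$ is generated by isometries induced by birational automorphisms and $W_X$ is generated by reflections along prime exceptional divisors, this subgroup is contained in, and in fact coincides with, the image of our $\rho$. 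As $O^{\#}(\Pic(X)_{\tf})$ has index at most $2$ in $O(\Pic(X)_{\tf})$, the image of $\rho$ has finite index in $O(\Pic(X)_{\tf})$.

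\textbf{Main obstacle.} The principal subtlety lies in the Galois-descent step: one has to verify that taking $G_F$-invariants in each factor of the decomposition obtained over $\C$ preserves the finite-index property. The first factor is unproblematic since $R_X = W_{\overline{X}}^{G_F}$ by construction, but matching $(\monltbir(\overline{X}))^{G_F}$ with the image produced by $\Bir(X)$ requires combining the descent statements for birational maps (Lemma \ref{lem_biriso}) with the standard arithmetic-group fact that the subgroup of invariants of an arithmetic lattice under a finite group action on the ambient orthogonal lattice is again of finite index in the corresponding invariant orthogonal group. Once this bookkeeping is completed, both halves of the statement follow from the complex case as sketched above.
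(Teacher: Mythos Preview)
Your approach is essentially the same as the paper's, and the overall strategy is correct. Two differences are worth flagging.

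For the kernel, you are actually more careful than the paper: you explicitly show that the $R_X$-component of any element of $\ker(\rho)$ must be trivial, using that $f^*$ preserves the fundamental domain $\clMov(X)\cap\Pos(X)$ for the $R_X$-action (Proposition~\ref{prop_tak404_sing}). The paper skips this and simply asserts $\ker(\rho)\subset\Aut(X)$. For finiteness, the paper invokes \cite[Proposition~2.26]{brion2018notes} (automorphisms fixing a quasi-polarization form a linear algebraic group, necessarily finite since $h^0(X,\cT_X)=0$) rather than recycling the Hilbert-scheme/$\PGL$ argument; both routes work.

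For the image, your reduction to $\C$ and the invocation of \cite[Lemma~6.3, Theorem~6.4]{lehn2024morrison} matches the paper. The step you flag as the ``main obstacle'' is exactly where the paper does real work, and your sketch is not quite complete there. The paper first uses Lemma~\ref{lem_birfg} (finite generation of $\Bir(\overline{X})$) and the finite generation of $\Pic(\overline{X})_{\tf}$ to ensure that all the relevant $G_F$-actions factor through a \emph{finite} quotient; this is what allows one to apply the general group-theoretic lemma from \cite{bright2020finiteness} (a $G$-equivariant homomorphism with finite kernel and cofinite image, with $G$ acting through a finite quotient, induces the same on $G$-invariants). It then passes from $O(\Pic(\overline{X})_{\tf})^{G_F}$ to $O(\Pic(X)_{\tf})$ via the chain $O(\Pic(\overline{X})_{\tf})^{G_F}\hookrightarrow O(\Pic(\overline{X})_{\tf},\Pic(X)_{\tf})\to O(\Pic(X)_{\tf})$, using \cite[Proposition~2.2]{bright2020finiteness} at both steps. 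Your appeal to Lemma~\ref{lem_biriso} at this point is misplaced: that lemma compares two algebraically closed fields, whereas what is needed here is ordinary Galois descent $\Bir(\overline{X})^{G_F}=\Bir(X)$, which the paper states without proof. Substituting the correct descent statement and making the \cite{bright2020finiteness} input explicit would complete your argument.
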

\begin{proof}
    First, assume that $F$ is algebraically closed. Fix a finitely generated subfield $F' \subset F$ such that $X$ is the base change of a projective variety $X'$ over $\overline{F'} \subset F$. After fixing an embedding $\overline{F'} \subset \C$, we denote by $X'_\C$ the base change of $X'$ to $\C$. Note that both $X'$ and $X'_\C$ are projective primitive symplectic varieties with $\Q$-factorial terminal singularities thanks to Lemma \ref{lem_extofPSV}, Lemma \ref{lem_descentofpsvfg} and Lemma \ref{lem_qfactterm_basechange}. Moreover, $b_2(X') = b_2(X'_{\C}) \geq 5$ thanks to \cite[VI - Corollary 4.3]{milne1980etale}. Now observe that
    \[\Pic(X)_\R \simeq \Pic(X')_\R \simeq \Pic(X'_\C)_\R\]
    by Lemma \ref{lem_piciso}, and that $R_X \simeq R_{X'} \simeq R_{X'_\C}$ since, in our case, the projections $X \to X'$ and $X'_{\C} \to X'$ induce bijections between the sets of prime exceptional divisors. Moreover, we also have
    \[\Bir(X) \simeq \Bir(X') \simeq \Bir(X'_\C)\]
    thanks to Lemma \ref{lem_biriso}. Therefore, we are reduced to the case $F = \C$. In this case, the image of $\rho$ is a finite index subgroup by \cite[Theorem 5.12 and Lemma 6.3]{lehn2024morrison}. Concerning the kernel of $\rho$, recall that a birational morphism that fixes an ample line bundle under pullback extends to an automorphism, so that $\ker(\rho) \subset \Aut(X)$. Moreover, the group scheme $G$ of automorphisms of a projective variety fixing a quasi-polarization is a linear algebraic group as shown by \cite[Proposition 2.26]{brion2018notes}, so is of finite type over $\C$. Since $h^0(X,\cT_X) = 0$ by \cite[Lemma 4.6]{bakker2022global}, $G$ is of dimension zero and is therefore finite, so that $\ker \rho \subset G$ is also finite.
    
    We now turn to the case of a non-algebraically closed field $F$. Denote by $G_F$ the absolute Galois group of $F$. Let $\overline{\rho} : \Bir(\overline{X}) \ltimes R_{\overline{X}} \to O(\Pic(\overline{X})_{\tf})$. By the previous case, the kernel of $\overline{\rho}$ is finite and its image is of finite index in $O(\Pic(\overline{X})_{\tf})$. Note that $\Bir(\overline{X})^{G_F} = \Bir(X)$. The action of $G_F$ on $\Bir(\overline{X})$ factors through a finite quotient since $\Bir(\overline{X})$ is finitely generated by Lemma \ref{lem_birfg}. Similarly, as $\Pic(\overline{X})_{\tf}$ is finitely generated, the actions of $G_F$ on $R_{\overline{X}} = W_{\overline{X}}$ and $O(\Pic(\overline{X})_{\tf})$ factors through a finite quotient. Therefore, we can apply \cite[Lemma 3.12]{bright2020finiteness} to $\overline{\rho}$ and it follows that the induced morphism
    \[\Bir(X) \ltimes R_X \to O(\Pic(\overline{X})_{\tf})^{G_F}\]
    has a finite kernel, and its image is of finite index. Denote by $O(\Pic(\overline{X})_{\tf}, \Pic(X)_{\tf})$ the subgroup of $O(\Pic(\overline{X})_{\tf})$ stabilizing $\Pic(X)_{\tf}$ as a set. By \cite[Proposition 2.2.(2)]{bright2020finiteness}, $O(\Pic(\overline{X})_{\tf})^{G_F}$ is of finite index in $O(\Pic(\overline{X})_{\tf}, \Pic(X)_{\tf})$. Finally, by \cite[Proposition 2.2.(1)]{bright2020finiteness}, the morphism 
    \[O(\Pic(\overline{X})_{\tf}, \Pic(X)_{\tf}) \to O(\Pic(X)_{\tf})\]
    has a finite kernel, and its image is of finite index. We then check directly that the composite
    \[\Bir(X) \ltimes R_X \to O(\Pic(\overline{X})_{\tf})^{G_F} \hookrightarrow O(\Pic(\overline{X})_{\tf}, \Pic(X)_{\tf}) \to O(\Pic(X)_{\tf})\]
    coincides with $\rho$, which therefore has a finite kernel and a finite index image as composed of such morphisms. This completes the proof.
\end{proof}
Thanks to the results of this section, we will now prove Theorem \ref{thm_conjcar0}.(1). The strategy consists of adapting the proofs of \cite[Theorem 4.1.4]{takamatsu2025finiteness} and \cite[Corollary 3.15]{bright2020finiteness}. 

\begin{proof}[Proof of Theorem \ref{thm_conjcar0}.(1)]
    Denote by $\Gamma$ the image of $\rho$. Since $\Gamma$ is of finite index in $O(\Pic(X)_{\tf})$ by Proposition \ref{prop_imagerho}, it is an arithmetic subgroup of $O(\Pic(X)_{\tf})$. Pick any ample class $y \in \Pos(X) \cap \Pic(X)_{\tf}$. Arguing as \cite[page 511]{sterk1985finiteness}, we obtain that the set
    \[\Pi := \enstq{x \in \Pos(X)^+}{\forall \gamma \in \Gamma : q_X(\gamma x, y) \geq q_X(x,y)}\]
    is a rational polyhedral fundamental domain under the action of $\Gamma$ on $\clPos^+(X)$. We show that $\Pi \subset \clMov(X)^+$. Take $x \in \Pi$. By Proposition \ref{prop_tak413_sing}.\ref{prop_tak413_5}, it is sufficient to show that $q_{\overline{X}}(x,E) \geq 0$ for any $E$ in a Galois orbit $I$ of prime exceptional divisors on $\overline{X}$ such that $W_I$ is finite, where $\overline{X}$ is the base change of $X$ to an algebraic closure $\overline{F}$ of $F$. Indeed, since $\Pi$ is rational polyhedral, we will automatically obtain the inclusion in $\clMov^+(X)$. For such an orbit $I$, after replacing the sum $E_I$ of the elements in $I$ by a sufficiently divisible multiple, we may assume that $E_I$ is Cartier. The longest element $r_I$ of the Coxeter system $(W_I,I)$ acts on $\Pic(X)_{\tf}$ as the reflection in the class of $E_I$. Since $r_I \in \Gamma$ and $x \in \Pi$, we obtain $q_{\overline{X}}(r_I(x),y) \geq q_X(x,y)$, which is equivalent to $q_{\overline{X}}(x,E_I) = \abs{I} \cdot q_{\overline{X}}(x,E) \geq 0$ for any $E \in I$.
    
    Denote by $\Gamma_{\Bir}$ the image of the morphism $\Bir(X) \to O(\Pic(X)_\R)$. We show that $\Pi$ is a fundamental domain under the action of $\Gamma_{\Bir}$ on $\clMov^+(X)$. By the above discussion, for any $x \in \clMov^+(X)$, we can find $f \in \Bir(X)$ and $r \in R_X$ such that $rf^*(x) \in \Pi \subset \clMov(X)$. Note that Proposition \ref{prop_tak404_sing} implies that $\clMov^+(X)$ is a fundamental domain under the action of $R_X$ on $\Pos(X)^+$. Observe that $f^*(x)$ already lies in $\clMov^+(X)$, therefore $f^*(x) = rf^*(x) \in \Pi$, and that concludes the proof.
\end{proof}

\subsection{The absolute nef cone conjecture}\label{section_nefcone}
In this section, we prove Theorem \ref{thm_conjcar0}.(2).

\begin{defn}\label{defn_bamp}
    Let $F$ be a field of characteristic $0$ and $X$ a projective primitive symplectic variety over $F$. If $X_{\overline{F}}$ has $\Q$-factorial and terminal singularities, we define \emph{the birational ample cone} of $X$ as the subset of $\Pic(X)_\R$ given by
    \[\BAmp(X) := \bigcup_{f : X \dashrightarrow Y} f^* \Amp(Y),\]
    where $f$ runs through all birational maps with target a projective primitive symplectic variety $Y$ over $F$ such that $Y_{\overline{F}}$ has $\Q$-factorial and terminal singularities.
\end{defn}
\begin{rmk}
    When $F = \C$, we recover the classical definition of the birational ample cone as written, for example, in \cite[Definition 4.5]{lehn2024morrison}. Also, note that if $f : X \dashrightarrow Y$ is a birational map as in the definition above, then $f$ is an isomorphism in codimension one, and hence the pullback of line bundles is well defined. Indeed, the singularities of $X$ and $Y$ are terminal if and only if the singularities of $X_{\overline{F}}$ and $Y_{\overline{F}}$ are terminal by \cite[Proposition 2.15]{kollar2013singularities}. Therefore, the claim follows from \cite[Corollary 3.54]{Kollár_Mori_1998}. 
\end{rmk}

\begin{lem}\label{lem_bampinter}
    Let $F$ be a a field of characteristic $0$ and $X$ a projective primitive symplectic variety over $F$ such that $\overline{X} := X_{\overline{F}}$ has $\Q$-factorial and terminal singularities. Then $\BAmp(X) = \BAmp(\overline{X}) \cap \Pic(X)_\R$.
\end{lem}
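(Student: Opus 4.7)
The plan is to prove the two inclusions separately. For $\BAmp(X) \subset \BAmp(\overline{X}) \cap \Pic(X)_\R$, given a birational map $f : X \dashrightarrow Y$ witnessing $\alpha \in f^*\Amp(Y) \subset \BAmp(X)$, the base-changed map $f_{\overline{F}} : \overline{X} \dashrightarrow Y_{\overline{F}}$ is again birational with target a projective primitive symplectic variety having $\Q$-factorial terminal singularities by Lemma \ref{lem_extofPSV} and Lemma \ref{lem_qfactterm_basechange}, and Lemma \ref{lem_amplemovbasechange} ensures $\alpha \in f_{\overline{F}}^*\Amp(Y_{\overline{F}})$, so that $\alpha \in \BAmp(\overline{X}) \cap \Pic(X)_\R$.

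For the reverse inclusion, I would argue by Galois descent. Pick $\alpha \in \BAmp(\overline{X}) \cap \Pic(X)_\R$, lying in the interior of some chamber $g^*\Amp(Z)$ with $g : \overline{X} \dashrightarrow Z$ as in Definition \ref{defn_bamp}. The key rigidity input is that any two such chambers $g_1^*\Amp(Y_1)$ and $g_2^*\Amp(Y_2)$ that meet must be identified via a biregular isomorphism $Y_1 \xrightarrow{\sim} Y_2$: by \cite[Corollary 3.54]{Kollár_Mori_1998} the birational map $g_2 \circ g_1^{-1}$ is an isomorphism in codimension one, and since it pulls back an ample class to an ample class, by normality of the $Y_i$ it identifies the graded section rings, so that $Y_1$ and $Y_2$ are both recovered as the same $\operatorname{Proj}$. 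By Proposition \ref{prop_definedfgsubf}, I would then descend $Z$ and $g$ to a finite Galois extension $L/F$ in $\overline{F}$ with group $\Gamma$. For each $\sigma \in \Gamma$, applying the rigidity statement to $(g_L, Z_L)$ and its $\sigma$-twist (both chambers contain the $\Gamma$-invariant class $\alpha_L$) yields a unique $L$-isomorphism $\phi_\sigma : Z_L \xrightarrow{\sim} \sigma(Z_L)$ with $\phi_\sigma \circ g_L = \sigma(g_L)$, and a direct check gives the cocycle relation $\phi_{\sigma\tau} = \sigma(\phi_\tau) \circ \phi_\sigma$.

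Effective Galois descent for projective varieties then requires a $\Gamma$-invariant ample polarization on $Z_L$. I would produce one by perturbing $\alpha$ inside the chamber: since the chamber is open and $\Pic(X)_\Q$ is dense in $\Pic(X)_\R = \left(\Pic(\overline{X})_\R\right)^{G_F}$ by Remark \ref{rmk_invariantcommute}, there is a rational class $\alpha_0 \in \Pic(X)_\Q$ close to $\alpha$ and still lying in $g^*\Amp(Z)$; its transport $(g_L^{-1})^*\alpha_{0,L}$, after clearing denominators, gives the required polarization. Descent then furnishes an $F$-form $Y$ of $Z$, which is a projective primitive symplectic variety by Lemma \ref{lem_descentofpsvfg} and which has $\Q$-factorial terminal singularities after base change by construction; $g$ descends to a birational map $f : X \dashrightarrow Y$ over $F$, and Lemma \ref{lem_extensioncones} yields an ample class on $Y$ whose $f^*$-pullback equals $\alpha$, hence $\alpha \in f^*\Amp(Y) \subset \BAmp(X)$.

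The main obstacle is the effectivity of the Galois descent. The isomorphisms $\phi_\sigma$ and the cocycle relation come essentially for free from the chamber rigidity statement, but producing a $\Gamma$-invariant integral ample line bundle on $Z_L$ to polarize the descent datum critically uses that $\alpha$ lies in the interior of its chamber, so that small rational $G_F$-invariant perturbations remain inside.
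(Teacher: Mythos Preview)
Your proposal is correct and follows the same Galois-descent strategy as the paper. The paper's version is slightly more direct: instead of perturbing $\alpha$ to a rational class in order to polarize the descent datum, it observes that the birational self-maps $\psi_\sigma := \overline{f}\circ\phi_\sigma\circ\overline{f}^{-1}$ of $\overline{Y}$ already fix the ample class $\overline{y}=(\overline{f}^{-1})^*\overline{x}$ (because $\phi_\sigma^*\overline{x}=\overline{x}$), hence extend to isomorphisms---this simultaneously produces the descent data and bypasses your polarization step, which in any case is not needed since Galois descent of quasi-projective varieties along a finite extension is always effective.
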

\begin{proof}
    For the inclusion $\BAmp(X) \subseteq \BAmp(\overline{X}) \cap \Pic(X)_\R$, let $f : X \dashrightarrow Y$ be a birational map as in Definition \ref{defn_bamp}, and take $\alpha \in \Amp(Y)$. Set $\overline{Y} := Y_{\overline{F}}$. Consider the Cartesian square associated with the base change of $f$ to $\overline{F}$, where the vertical arrows are the canonical projections.
    \begin{center}
        \begin{tikzcd}
\overline{X} \arrow[dd, "p_X"] \arrow[rr, "f_{\overline{F}}", dashed] &         & \overline{Y} \arrow[dd, "p_Y"] \\
                                                                          & \square &                                    \\
X \arrow[rr, "f", dashed]                                                 &         & Y                                 
\end{tikzcd}
    \end{center}
    We have $p_X^* f^* (\alpha) = f_{\overline{F}}^* p_Y^* (\alpha)$, and it follows from Lemma \ref{lem_amplemovbasechange} that 
    \[p_X^* f^*(\alpha) \in \BAmp(\overline{X}) \cap \Pic(X)_\R,\]
    which proves the desired inclusion. 
    
    For the reverse inclusion $\BAmp(\overline{X}) \cap \Pic(X)_\R \subseteq \BAmp(X)$, let $G_F$ be the absolute Galois group of $F$. Take $\overline{x} \in \BAmp(\overline{X}) \cap \Pic(X)_\R$ and let $\overline{f} : \overline{X} \dashrightarrow \overline{Y}$ be a birational map as in Definition \ref{defn_bamp} such that $\overline{x} = \overline{f}^*(\overline{y})$, with $\overline{y} \in \Amp(\overline{Y})$. We claim that if there exists a birational map $f: X \dashrightarrow Y$ over $F$ such that $\overline{f} = f_{\overline{F}}$, then we have proven the inclusion. Indeed, since both $\overline{x}$ and $\overline{f}$ are $G_F$-equivariant (being defined on $F$), $\overline{y}$ is also $G_F$-invariant and it follows that $\overline{y} \in \Amp(Y)$ by Lemma \ref{lem_extensioncones} and Remark \ref{rmk_invariantcommute}. 
    
    We must therefore show that $\overline{f}$ descends to a birational map over $F$. For definitions and terminology on Galois descent, see \cite[$\mathsection$ 6.2]{bosch2012neron}. Since $X$ is defined over $F$, we have a canonical descent data: for any $\sigma \in G_F$, there exists an isomorphism of schemes $\phi_\sigma : \overline{X} \to \overline{X}$ such that the diagram
    \begin{center}\label{eq_diagdescent}
        \begin{tikzcd}
\overline{X} \arrow[dd] \arrow[rr, "\phi_\sigma"] &  & \overline{X} \arrow[dd] \\
                                                      &  &                             \\
\Spec(\overline{F}) \arrow[rr, "\sigma"]            &  & \Spec(\overline{F})        
\end{tikzcd}
    \end{center}
    is commutative, and $p_X \circ \phi_\sigma = p_X$. For all $\sigma \in G_F$, define the birational map
    \[\psi_{\sigma} := \overline{f} \circ \phi_{\sigma} \circ \overline{f}^{-1} : \overline{Y} \dashrightarrow \overline{Y}.\]
    Observe that the $\psi_\sigma$ fit into a commutative diagram as above, and that they are isomorphisms in codimension one. If we can show that these birational applications extend to isomorphisms, then they will define a descent data for both $\overline{Y}$ and $\overline{f}$. Fix $\sigma \in G_F$. Since $\overline{x} \in \Pic(X)_\R$ and $p_X \circ \phi_\sigma = p_X$, we have $\phi_{\sigma}^*(\overline{x}) = \overline{x}$, so that
    \[\psi_\sigma^*(\overline{y}) = (\overline{f}^{-1})^*\phi_{\sigma}^*(\overline{x}) = (\overline{f}^{-1})^*(\overline{x}) = \overline{y}.\]
    Hence each $\psi_\sigma^*$ fixes an ample class and therefore extends to an isomorphism. This defines a descent datum for $\overline{Y}$ and $\overline{f}$, concluding the proof.
\end{proof}

\begin{lem}\label{lem_bampmov}
    Let $L$ be an algebraically closed field of characteristic $0$ and $X$ a projective primitive symplectic variety over $L$ with $\Q$-factorial and terminal singularities. We have the equality $\overline{\BAmp(X)} = \clMov(X)$.
\end{lem}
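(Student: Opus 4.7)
The plan is to prove the two inclusions separately, reducing the nontrivial one to the complex case via the base-change machinery developed earlier in the paper and then appealing to the chamber decomposition of the movable cone established in \cite{lehn2024morrison}.

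For the inclusion $\overline{\BAmp(X)} \subseteq \clMov(X)$, I would first observe that any birational map $f : X \dashrightarrow Y$ as in Definition \ref{defn_bamp} is an isomorphism in codimension one by \cite[Corollary 3.54]{Kollár_Mori_1998}, since both $X$ and $Y$ are $\Q$-factorial with terminal singularities. For $\alpha \in \Amp(Y)$, after replacing $\alpha$ by a sufficiently divisible multiple so that it is very ample, a general member of $|\alpha|$ pulls back by $f^{-1}$ to an effective divisor on $X$ whose support is the strict transform of a general member and hence avoids the exceptional locus of $f^{-1}$ (of codimension $\geq 2$). Thus $f^*\alpha$ has a base locus of codimension at least two, so $f^*\alpha \in \Mov(X)$, and the inclusion follows by taking closures.

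For the reverse inclusion $\clMov(X) \subseteq \overline{\BAmp(X)}$, the plan is to reduce to $L = \C$ and then invoke the complex theory. By Corollary \ref{cor_descentofPSVtoQ} and Lemma \ref{lem_extofPSV}, I can pick a finitely generated subfield $L' \subseteq L$ (over $\Q$) and a model $X'$ over $\overline{L'}$ with $X = X'_L$; after fixing an embedding $\overline{L'} \hookrightarrow \C$, the variety $X'_\C$ is a projective primitive symplectic variety, $\Q$-factorial and terminal by Lemma \ref{lem_qfactterm_basechange}. Lemma \ref{lem_piciso} yields isometries $\Pic(X)_\R \simeq \Pic(X'_{\overline{L'}})_\R \simeq \Pic(X'_\C)_\R$ compatible with the BBF form, and Lemma \ref{lem_amplemovbasechange} shows that these isometries preserve $\Amp$, $\Mov$ and $\Big$. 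To transport $\BAmp$ across these isomorphisms, I would argue as in Lemma \ref{lem_biriso}: every small $\Q$-factorial projective modification $X'_\C \dashrightarrow Y_\C$ is defined over a finitely generated sub-extension by Proposition \ref{prop_definedfgsubf}, and the terminality/$\Q$-factoriality of the target is preserved by base change in both directions via Lemma \ref{lem_qfactterm_basechange}; combined with the identification of Picard groups, this gives a natural bijection between the small $\Q$-factorial modifications of $X$ and those of $X'_\C$ that identifies the corresponding $f^*\Amp(Y)$ subcones. Hence $\BAmp(X)$ corresponds to $\BAmp(X'_\C)$ under the above isometry, and it suffices to handle the case $L = \C$.

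In the complex case, the result is the Morrison--Kawamata-type chamber decomposition for primitive symplectic varieties. The key input, established in \cite[Section 6]{lehn2024morrison}, is that
\[\Mov(X) \cap \Pos(X) \;=\; \bigcup_{f : X \dashrightarrow Y} f^*\Amp(Y) \,\cap\, \Pos(X),\]
the union being taken over small $\Q$-factorial projective modifications $f$ with $Y$ a $\Q$-factorial terminal primitive symplectic variety; this is a direct consequence of running the $D$-MMP starting from a movable class $D$, which terminates by \cite[Theorem 1.3]{lehn2024footnotes} and outputs a small modification on which $D$ becomes nef. Taking closures on both sides yields $\clMov(X) \cap \clPos(X) \subseteq \overline{\BAmp(X)}$. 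Finally, since movable divisors have nonnegative BBF square (by Boucksom--Zariski divisorial decomposition, as recalled in the introduction), one has $\clMov(X) \subseteq \clPos(X)$, so the previous inclusion already gives $\clMov(X) \subseteq \overline{\BAmp(X)}$.

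The main obstacle I anticipate is the transport of $\BAmp$ under base change, since, contrary to $\Amp$, $\Mov$ and $\Big$, the cone $\BAmp$ is defined in terms of an auxiliary family of small modifications; the resolution is to check that this family descends and ascends along algebraically closed field extensions exactly as $\Bir$ does in Lemma \ref{lem_biriso}, using the rigidity of pseudo-automorphisms of $K$-trivial varieties and the fact that the relevant discrete invariants (Picard lattice, BBF form, prime exceptional divisors) are preserved.
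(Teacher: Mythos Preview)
Your route and the paper's diverge. The paper does \emph{not} transport $\BAmp$ to $\C$ and invoke the complex result; instead it runs the log-MMP directly over the intermediate algebraically closed field $\overline{L'}$. Concretely: for $\alpha \in \Mov^\circ(X)$, descend to a divisor $D$ on $X'$ over $\overline{L'}$, observe via base-change to $\C$ and \cite[Lemmas 4.6, 4.7]{lehn2024morrison} that $D$ is big, and run a $(K_{X'}+D)$-MMP with scaling over $\overline{L'}$, which terminates by BCHM \cite[Corollary 1.4.2]{birkar2010existence}. The only use of $\C$ is to rule out divisorial contractions: one base-changes a hypothetical divisorial step and invokes the Hodge-theoretic argument of \cite[Proposition 5.6]{lehn2024morrison} to derive a contradiction with $q_{X'_\C}(E,D)\ge 0$. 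The output is a small modification $\phi : X' \dashrightarrow Y$ over $\overline{L'}$ with $\phi_*D$ nef, whence $\alpha \in \phi_L^*\Nef(Y_L)$.

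Your strategy has two concrete gaps. First, the transport of $\BAmp$ along algebraically closed extensions is precisely Lemma \ref{lem_bampext}, which the paper proves \emph{after} the present lemma and \emph{using} it (via the density of $\BAmp$ in $\clMov$), so invoking it here would be circular. Your proposed substitute --- ``argue as in Lemma \ref{lem_biriso}'' --- does not work as stated: the scheme $\bBir^{\textbf{flat}}$ parametrises birational \emph{self}-maps, not small modifications to other targets, and Proposition \ref{prop_definedfgsubf} only descends $X'_\C \dashrightarrow Y$ to some finitely generated extension of $\overline{L'}$, not to $\overline{L'}$ itself. Any honest fix (e.g.\ realising $Y$ as the ample model of $f^*H$ and verifying that this descends together with its $\Q$-factorial terminal structure) amounts to running the MMP over $\overline{L'}$ --- which is the paper's approach. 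Second, your citation of \cite[Theorem 1.3]{lehn2024footnotes} for termination in the complex case is off: that result assumes terminal \emph{hyperquotient} singularities. The correct input, both in the paper and in \cite[Proposition 5.8]{lehn2024morrison}, is BCHM applied to big classes.
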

\begin{proof}
    The inclusion $\overline{\BAmp(X)} \subseteq \clMov(X)$ is clear. To prove the converse inclusion, it suffices to show that every rational element in the interior of $\Mov(X)$ is nef on a birational model of $X$.  Take $\alpha \in \Mov^\circ(X) \cap \Pic(X)_\Q$, and let $D$ be a $\Q$-Cartier divisor on $X$ representing the class of $\alpha$. Let $L' \subset L$ be a finitely generated subfield, $X'$ a projective primitive symplectic variety over $\overline{L'} \subset L$ and $D'$ a $\Q$-Cartier divisor on $X'$ such that $X = X'_L$ and $D = D'_L$. Fix an embedding $\overline{L'} \hookrightarrow \C$, and consider the complex projective primitive symplectic variety $X'_\C$. Both $X'$ and $X'_\C$ have $\Q$-factorial terminal singularities by Lemma \ref{lem_qfactterm_basechange}. Recall that we have isometries
    \begin{equation}\label{eq_isomlembamp}
        \Pic(X)_\R \simeq \Pic(X')_\R \simeq \Pic(X'_\C)_\R
    \end{equation}
    induced by pullback of line bundles by Lemma \ref{lem_piciso}. The movable cones of $X$, $X'$ and $X'_\C$ correspond bijectively through the above isometries by Lemma \ref{lem_amplemovbasechange}, therefore $D'$ and $D'_{\C}$ lie respectively in $\Mov^\circ(X')\cap \Pic(X')_\Q$ and $\Mov^\circ(X'_\C) \cap \Pic(X'_\C)_\Q$. Since $\Mov^\circ(X'_\C) \subset \Pos(X'_\C) \subset \Big(X'_\C)$ by \cite[Lemma 4.6, Lemma 4.7]{lehn2024morrison}, the same is true for $X'$ once again thanks to (\ref{eq_isomlembamp}) and Lemma \ref{lem_amplemovbasechange}, thus $D'$ is big. 
    
    We claim that only flips appear in any $K_{X'} + D' = D'$ log-MMP on $X'$. Suppose by contradiction that a divisorial contraction $f$ appears. The base change $f_\C$ remains a divisorial contraction since the relative Picard rank of $f_\C$ remains equal to one by \cite[Proposition 3.1]{davesh2012neron} (note that although the proposition is stated for smooth varieties, the arguments in the proof hold in greater generality). The rest of the argument follows the proof of \cite[Proposition 5.6]{lehn2024morrison}: there exists a $D'_{\C}$-negative curve $R$ on $X'_{\C}$ which can be contracted, and which covers a divisor $E \subset X'_{\C}$. By \cite[Corollary 2.14]{lehn2024morrison}, the curve $R$ deforms along the its Hodge locus and at the general point $t$ of the latter, the Picard number $\rho(X'_{\C,t})$ is one. This means that the class of the deformation $R_t$ of $R$ is dual to a positive multiple of the class of the deformation $E_t$ of $E$, which means that there is $\lambda > 0$ such that $R = \lambda E^\vee$. Then $0 > R \cdot D'_{\C} = \lambda q_{X'_\C}(E,D'_{\C}) \geq 0$, which leads to a contradiction. 
    
    Running an MMP with scaling for a suitable ample divisor
    \cite[Corollary 21.9]{lyu2022relative}, we obtain a birational model $\phi : X' \dashrightarrow Y$ for which $\phi_*D'$ is nef, and that completes the proof.
\end{proof}

\begin{lem}\label{lem_bampext}
    Let $L \subset M$ be a field extension of algebraically closed field of characteristic $0$ and $X$ a projective primitive symplectic variety over $L$ with $\Q$-factorial terminal singularities. Denote by $p : X_M \to X$ the projection. The isomorphism $p^* : \Pic(X)_\R \to \Pic(X_M)_\R$ from Lemma \ref{lem_piciso} induces a bijection between $\BAmp(X)$ and $\BAmp(X_M)$.
\end{lem}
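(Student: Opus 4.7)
The inclusion $p^{\ast}(\BAmp(X)) \subseteq \BAmp(X_M)$ is the easy direction and follows from base-changing the defining data. Given a birational map $f: X \dashrightarrow Y$ with $Y/L$ a $\Q$-factorial terminal projective primitive symplectic variety and $\alpha \in \Amp(Y)$, the fibre product produces $f_M: X_M \dashrightarrow Y_M$; the target $Y_M$ is a $\Q$-factorial terminal projective primitive symplectic variety by Lemmas \ref{lem_extofPSV} and \ref{lem_qfactterm_basechange}, $f_M$ is birational (an isomorphism in codimension $1$ since $f$ is), and $p_Y^{\ast}\alpha \in \Amp(Y_M)$ by Lemma \ref{lem_amplemovbasechange}. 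The Cartesian identity $p^{\ast}(f^{\ast}\alpha) = f_M^{\ast}(p_Y^{\ast}\alpha)$ then places $p^{\ast}(f^{\ast}\alpha)$ in $\BAmp(X_M)$.

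For the reverse inclusion, my plan is to combine descent with spreading out and specialization at an $L$-point. Starting from $\alpha' = (f')^{\ast}\beta' \in \BAmp(X_M)$ with $f': X_M \dashrightarrow Y'$ and $\beta' \in \Amp(Y')$, Proposition \ref{prop_definedfgsubf} produces a finitely generated sub-extension $L \subseteq L_1 \subseteq M$ together with a birational map $f_1 : X_{L_1} \dashrightarrow Y_1$ of $L_1$-varieties and a class $\beta_1 \in \Pic(Y_1)_\R$ whose base change to $M$ recovers $(f',\beta')$. Writing $L_1 = \mathrm{Frac}(A)$ for a finitely generated $L$-domain $A$ and setting $T := \Spec A$, I would spread $(Y_1,f_1,\beta_1)$ out to a projective flat family $\cY \to U$ on a non-empty open $U \subseteq T$, together with a $U$-birational map $X \times_L U \dashrightarrow \cY$ and a class $\boldsymbol{\beta} \in \Pic(\cY)_\R$ restricting to $\beta_1$ on the generic fibre. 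After shrinking $U$ I may assume that every closed fibre $\cY_t$ is a $\Q$-factorial terminal projective primitive symplectic variety, that $\boldsymbol{\beta}|_{\cY_t}$ is ample, and that the fibre-wise specialisation $f_t : X \dashrightarrow \cY_t$ is birational.

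Since $L$ is algebraically closed and $U$ is a finite-type $L$-scheme, the Nullstellensatz gives $U(L) \neq \emptyset$. Choosing $t_{0} \in U(L)$ and setting $Y_{0} := \cY_{t_{0}}$, $f_{0} := f_{t_{0}}$, $\beta_{0} := \boldsymbol{\beta}|_{Y_{0}}$ yields $\alpha := f_{0}^{\ast}\beta_{0} \in \BAmp(X)$. The final point to check is that $p^{\ast}\alpha = \alpha'$: because $\boldsymbol{\beta}$ is defined globally on $\cY$ and the Picard lattice of the fibres is constant along the family (via Lemma \ref{lem_piciso}-type identifications combined with constancy of the Picard rank in the spread-out family of primitive symplectic varieties), the specialisations of $\boldsymbol{\beta}$ at the generic point and at $t_{0}$ correspond under the canonical identifications of $\Pic_\R$ induced by pull-back along the projections, and similarly for $f_{1}$ versus $f_{0}$.

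The main obstacle will be the careful verification that the spread-out family $\cY \to U$ can indeed be arranged, on a non-empty open of $T$, so that all of $\Q$-factoriality, terminality, the primitive symplectic structure, ampleness of $\boldsymbol{\beta}$, and birationality of the family of rational maps are simultaneously preserved, and that the associated identifications on $\Pic_\R$ coincide with the one induced by $p^{\ast}$. This rests on openness/constructibility of the relevant loci for flat families (openness of ampleness, behaviour of $\Q$-factoriality and terminality under specialisation in characteristic zero, semicontinuity arguments for the Picard rank), and on the fact that for primitive symplectic varieties the BBF form is deformation invariant, so that the generic and special Picard lattices can be matched functorially.
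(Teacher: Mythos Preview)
Your argument takes a genuinely different route from the paper's. The paper gives a short density argument: since $\overline{\BAmp(X)} = \clMov(X)$ by Lemma~\ref{lem_bampmov} and $p^{*}\clMov(X) = \clMov(X_M)$ by Lemma~\ref{lem_amplemovbasechange}, the image $p^{*}\BAmp(X)$ is dense in $\clMov(X_M)$. Any open ample chamber $f^{*}\Amp(Y) \subset \BAmp(X_M)$ must therefore meet some $p^{*}g^{*}\Amp(Z) = g_M^{*}\Amp(Z_M)$ for a birational model $g:X\dashrightarrow Z$ over $L$; but two overlapping ample chambers in $\BAmp(X_M)$ coincide (the map $f \circ g_M^{-1}$ pulls an ample class back to an ample class, hence extends to an isomorphism), so $f^{*}\Amp(Y) \subset p^{*}\BAmp(X)$. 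This bypasses all spreading-out machinery and leans instead on the MMP-based Lemma~\ref{lem_bampmov}, which you do not use.

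Your spreading-out strategy is in principle viable, but the step ``after shrinking $U$ I may assume every closed fibre $\cY_t$ is $\Q$-factorial'' is a genuine gap: $\Q$-factoriality is neither open nor constructible in flat families in general, so one cannot simply shrink $U$ to arrange it. You would need an argument specific to primitive symplectic varieties, for instance by controlling the local class groups along the family or by arguing via local triviality, and this is not supplied. Even granting that, the final identification $p^{*}\alpha_0 = \alpha'$ requires matching $\Pic(\cY_{t_0})_\R$ with $\Pic(Y_1)_\R$ compatibly with the rational map to the \emph{trivial} family $X \times_L U$, and your sketch does not make clear why the global class $\boldsymbol{\beta}$ induces the same element of $\Pic(X)_\R$ at the generic and at the chosen closed point. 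The paper's density-plus-chamber argument sidesteps both issues at the cost of invoking Lemma~\ref{lem_bampmov}; your approach, if completed, would be logically independent of that lemma but substantially longer.
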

\begin{proof}
    By base change, it is clear that $p^*\BAmp(X) \subseteq \BAmp(X_M)$. For the reverse inclusion, observe that $p^*\clMov(X) = \clMov(X_M)$ by Lemma \ref{lem_amplemovbasechange}. Therefore, Lemma \ref{lem_bampmov} implies that $p^*\BAmp(X)$ is dense in $\Mov(X_M)$. Let $f : X_M \dashrightarrow Y$ be a birational map as in Definition \ref{defn_bamp}. Since $f^*\Amp(Y)$ is an open subset of $\Mov(X_M)$, there exists a birational model $g : X \dashrightarrow Z$ as in Definition \ref{defn_bamp} such that 
    \[p^*g^*\Amp(Z) \cap f^*\Amp(Y) \neq \emptyset.\]
    If $g_M : X_M \dashrightarrow Z_M$ denotes the base change of $g$, we have $p^*g^*\Amp(Z) = g_M^*\Amp(Z_M)$, therefore
    \[f^*\Amp(Y) = g_M^*\Amp(Z_M) \subseteq p^*\BAmp(X).\] 
    This shows that $\BAmp(X_M) \subseteq p^*\BAmp(X)$ and completes the proof. 
\end{proof}

\begin{cor}\label{cor_finmarkmod}
    Let $L$ be an algebraically closed field of characteristic zero and $X$ a projective primitive symplectic variety over $L$ with $b_2(X) \geq 5$ and $\Q$-factorial terminal singularities. Then, $X$ has a finite number of marked minimal models $X'$ (Definition \ref{def_minmodel}) over $L$.
\end{cor}
\begin{proof}
    Take a finitely generated subfield $L' \subset L$ and let $X'$ be a projective primitive symplectic variety over $\overline{L'} \subset L$ such that $X = X'_L$. Fix an embedding $\overline{L'} \hookrightarrow \C$, and consider the complex projective primitive symplectic variety $X'_\C$. By \cite[Corollary 1.4]{lehn2024morrison}, $X'_\C$ has finitely many marked minimal models, therefore $\BAmp(X'_\C)$ is a finite union of pullbacks of ample cones. Moreover, these cones are precisely the connected components of $\BAmp(X'_\C)$. Using Lemma \ref{lem_bampext}, we deduce that $\BAmp(X')$ and $\BAmp(X)$ have finitely many connected components. This shows that the union defining the birational ample cone of $X$ is in fact finite, and it follows that the number of marked minimal models of $X$ is finite.
\end{proof}

\begin{defn}\label{defn_sigma}
    Let $L$ be an algebraically closed field of characteristic $0$, and let $X$ be a projective primitive symplectic variety over $L$ with $b_2(X) \geq 5$ and $\Q$-factorial terminal singularities. Let $\Sigma(X) \subset \Pic(X)_\R$ be the set
    \[
\Sigma(X) \coloneqq
\left\{
f^{\ast} (\alpha) \left| 
\begin{array}{l}
f\colon X \dashrightarrow Y \textup{ is a birational map of } \Q \textup{-factorial terminal projective primitive}\\
\textup{symplectic varieties over } L, \textup{ and } \alpha \in \Nef(Y)^{\vee} \textup{ is integral, primitive, and extremal}
\end{array}
\right.
\right\}.
\]
Here, $\Nef(Y)^\vee$ is the dual cone of $\Nef(Y)$ in $\Pic(Y)_\R$ with respect to the Beauville-Bogomolov-Fujiki form.
\end{defn}
As in the complex case, the square with respect to the Beauville-Bogomolov-Fujiki form of the elements of $\Sigma(X)$ is uniformly bounded from below.

\begin{prop}\label{prop_squarebounded}
    Let $L$ be an algebraically closed field of characteristic $0$, and let $X$ be a projective primitive symplectic variety over $L$ with $b_2(X) \geq 5$ and $\Q$-factorial terminal singularities. Then there exists $B > 0$ such that $q_{X}(x) \geq -B$ for all $x$ in $\Sigma(X)$.
\end{prop}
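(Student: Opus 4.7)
My plan is to reduce the bound on $\Sigma$ to a finite set of model-by-model estimates by exploiting the isometric nature of the group actions involved, and then to the known complex case.

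First, by Proposition~\ref{prop_isometry} the group $W_X$ acts by isometries on $(\Pic(X)_{\tf}, q_X)$, so applying an element of $W_X$ does not affect the BBF square. Furthermore, any birational map $f : X \dashrightarrow Y$ appearing in Definition~\ref{defn_sigma} is an isomorphism in codimension one (as recalled in the remark following Definition~\ref{defn_bamp}), hence $f^*$ induces a lattice isometry $\Pic(Y)_{\tf} \xrightarrow{\sim} \Pic(X)_{\tf}$ with respect to the BBF forms. Thus $q_X(\omega \cdot f^*(\alpha)) = q_Y(\alpha)$ for every triple $(\omega,f,\alpha)$ yielding an element of $\Sigma$, and the problem reduces to finding a uniform lower bound for $q_Y(\alpha)$ as $Y$ ranges over $\Q$-factorial terminal projective primitive symplectic birational models of $X$ and $\alpha$ over integral primitive extremal classes of $\Nef(Y)^\vee$. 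The corollary preceding this statement asserts that only finitely many such $Y_1, \dots, Y_n$ exist up to $L$-isomorphism, so it is enough to produce a bound $B_i$ on each fixed $Y_i$ and set $B := \max_i B_i$.

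Fix a model $Y = Y_i$. By Corollary~\ref{cor_descentofPSVtoQ} there is a finitely generated subfield $L' \subset L$ and a projective primitive symplectic variety $Y'$ over $\overline{L'} \subset L$ with $Y = Y'_L$. After fixing an embedding $\overline{L'} \hookrightarrow \C$, Lemma~\ref{lem_extofPSV} and Lemma~\ref{lem_qfactterm_basechange} ensure that $Y'_{\C}$ is a complex projective primitive symplectic variety with $\Q$-factorial terminal singularities and $b_2(Y'_{\C}) \geq 5$. Lemma~\ref{lem_piciso} furnishes isometries $\Pic(Y)_{\tf} \simeq \Pic(Y')_{\tf} \simeq \Pic(Y'_{\C})_{\tf}$, and Lemma~\ref{lem_amplemovbasechange} identifies the corresponding nef cones; the BBF-duals coincide under these isometries, and integral primitive extremal classes of $\Nef(Y)^\vee$ match those of $\Nef(Y'_{\C})^\vee$ preserving BBF squares. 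The problem thus reduces to the case $L = \C$.

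In the complex setting, the integral primitive extremal classes of $\Nef(Y'_{\C})^\vee$ are, up to the BBF-duality identification, precisely the primitive wall divisor classes in the sense of \cite[Definition~7.1]{lehn2024morrison}, and the uniform lower bound on their BBF squares on a fixed complex $\Q$-factorial terminal projective primitive symplectic variety is part of the finiteness package used in \cite[Section~7]{lehn2024morrison} to prove the absolute nef cone conjecture. The main obstacle is precisely this complex boundedness statement: a priori infinitely many extremal rays may appear in $\Nef(Y'_{\C})^\vee$, and only deformation-theoretic arguments together with the monodromy-invariance of the set of wall divisor classes yield a uniform bound. Our contribution here is limited to the algebraic repackaging (isometric reductions plus base-change compatibility) that transports this bound from $\C$ to an arbitrary algebraically closed field of characteristic zero.
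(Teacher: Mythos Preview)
Your proposal is correct and follows essentially the same approach as the paper: reduce via the isometric action of $W_X$ and of pullbacks $f^*$ to finitely many birational models (using the preceding corollary), then descend via the Lefschetz principle to $\C$ and invoke the complex bound from \cite[Proposition~7.7]{lehn2024morrison}. The only organizational difference is that the paper descends $X$ together with all the models $f_i : X \dashrightarrow Y_i$ to a single finitely generated subfield and applies the complex bound once on $X_\C$, whereas you descend each $Y_i$ separately and take the maximum of the resulting bounds; also, your sentence that the integral primitive extremal classes of $\Nef(Y'_{\C})^\vee$ are ``precisely'' the primitive wall divisor classes is a slight overstatement (they form a subset of the wall divisors), but since the complex bound applies to all wall divisors this does not affect the argument.
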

\begin{proof}
    By Corollary \ref{cor_finmarkmod}, $X$ has a finite number $n \geq 1$ of marked minimal models. For each $i \in \{1, \dotsc, n\}$, let $f_i : X \dashrightarrow Y_i$ be a birational map defining a marking. Take a finitely generated subfield $L' \subset L$ such that $X$, the $Y_i$ and the $f_i$ are defined over $\overline{L'} \subset L$. This means that there exist projective primitive symplectic varieties $X'$ and $Y'_i$ over $\overline{L'} \subset L$, together with birational maps $f'_i : X' \dashrightarrow Y'_i$, such that $X = X'_L$, $Y_i = Y'_{i,L}$ and $f_i = f'_{i,L}$. By Lemma \ref{lem_qfactterm_basechange}, $X'$ and the $Y'_i$ are projective primitive symplectic varieties with $\Q$-factorial terminal singularities. Fix an embedding $\overline{L'} \hookrightarrow \C$ and consider the base changes $X'_{\C}$, $Y'_{i,\C}$ and $f'_{i,\C}$. For the same reason, $X'_{\C}$ and the $Y'_{i,\C}$ also have $\Q$-factorial terminal singularities. By Lemma \ref{lem_piciso}, there are isometries
    \[\Pic(X)_\R \simeq \Pic(X')_\R \simeq \Pic(X'_\C)_\R, ~\text{and}~ \Pic(Y_i)_\R \simeq \Pic(Y'_i)_\R \simeq \Pic(Y'_{i,\C})_\R\]
    for all $i \in \{1, \dotsc, n\}$. For each index $i$, the nef cones of $Y_i$, $Y'_i$ and $Y'_{i,\C}$ correspond bijectively under these isometries by Lemma \ref{lem_amplemovbasechange}. Moreover, since isomorphisms descend along base changes induced by field extensions by \cite[\href{https://stacks.math.columbia.edu/tag/02L4}{Tag 02L4}]{stacks-project}, two distinct marked minimal models of $X'$ cannot become isomorphic after base change to $L$ or to $\C$. Therefore, the projection $X \to X'$ induces a bijection $\Sigma(X) \simeq \Sigma(X')$, and the projection $X'_{\C} \to X'$ induces an injective map $\Sigma(X') \hookrightarrow \Sigma(X'_{\C})$ such that the composite
    \[\Sigma(X) \hookrightarrow \Sigma(X'_{\C})\]
    preserves the Beauville-Bogomolov-Fujiki form. By \cite[Proposition 7.7]{lehn2024morrison}, there exists a real number $B > 0$ such that, for every $x \in \Sigma(X'_{\C})$, we have $q_{X'_{\C}}(x) \geq -B$. In particular, this inequality holds for every element of $\Sigma(X)$, which completes the proof.
\end{proof}
We can now prove Theorem \ref{thm_conjcar0}.(2) by adapting the proof of \cite[Theorem 1.2]{lehn2024morrison}.

\begin{proof}[Proof of Theorem \ref{thm_conjcar0}.(2)]
    Let $\Pi \subset $ be a rational polyhedral cone which is a fundamental domain under the action of $\Bir(X)$ on $\clMov^+(X)$ as in Theorem \ref{thm_conjcar0}.(1). By Proposition \ref{prop_squarebounded} and \cite[Proposition 3.4]{markman2015proof}, the set
    \[\Sigma_{\Pi} := \enstq{\alpha \in \Sigma}{\alpha^{\perp} \cap \Pi \neq \emptyset}\]
    is finite. Cutting out $\Pi$ by the elements of $\Sigma_{\Pi}$ yields a finite decomposition
    \begin{equation}\label{eq_subdivcone}
        \Pi = \cup_{i \in I} \Pi_i
    \end{equation}
    of $\Pi$ into closed rational polyhedral subcones, each with a non-empty interior, with $I$ a finite index set. Let $\Pi_i$ be one of these subcones and $f : X \dashrightarrow Y$ be a birational map as in the definition of $\Sigma$ such that $f^*(\Amp(Y)) \cap g^*(\Pi_i)$ for some $g \in \Bir(X)$. We prove the following equality:
    \begin{equation}\label{eq_pinef}
        g^*(\Pi_i) = f^*(\Nef(Y)) \cap g^*(\Pi).
    \end{equation}
    It suffices to establish it on the interiors. Denote by $\Pi^\circ$ and by $\Pi_i^\circ$ the interiors of $\Pi$ and $\Pi_i$ respectively. The non-empty intersection $\Lambda := g^*(\Pi^0) \cap f^*(\Amp(Y))$ is connected since both $g^*(\Pi^0)$ and $f^*(\Amp(Y))$ are convex cones. As $\Lambda$ cannot intersect $\alpha^{\perp}$ for any $\alpha \in \Sigma_{\Pi}$, we have $\Lambda \subset g^*(\Pi_i^\circ)$. For the reverse inclusion, since the connected components of $\BAmp(X)$ are all of the form $h^*(\Amp(Y'))$ for some birational map $h : X \dashrightarrow Y'$ as in the definition of $\Sigma$, we have $g^*(\Pi_i^\circ) \subset \Lambda$. This proves (\ref{eq_pinef}). 
    
    Consider the set
    \[I_f := \enstq{i \in I}{\exists g_i \in \Bir(X) ~\text{such that}~ g_i^*(\Pi_i) \subset f^*(\Nef(Y))}.\]
    Given that $\Pi$ is a fundamental domain for the $\Bir(X)$-action on $\clMov^+(X)$ and that $f^*(\Nef(Y))$ is contained in it, we see that $I_f$ is non-empty. Moreover, we claim that $I_f$ is uniquely determined by the isomorphism class of $Y$. Indeed, let $f' : X \dashrightarrow Y'$ be another birational map as in the definition of $\Sigma$. If $\phi : Y \to Y'$ is an isomorphism, then $\psi := f'^{-1} \circ \phi \circ f \in \Bir(X)$ satisfies $\psi^*f'^*(\Nef(Y')) = f^*(\Nef(Y))$. Therefore for any $i \in I$ and any $g \in \Bir(X)$, we have $g^*(\Pi_i) \subset f'^*(\Nef(Y'))$ if and only if $(g \circ \psi)^*(\Pi_i) \subset f^*(\Nef(Y))$, thus $i \in I_{f'}$ if and only if $i \in I_{f}$. Conversely, if $I_f = I_{f'}$, take $i \in I_Y = I_{Y'}$ and take $g,g' \in \Bir(X)$ such that $g^*(\Pi_i) \subset f^*(\Nef(Y))$ and $g'^*(\Pi) \subset f'^*(\Nef(Y'))$. Then $f \circ g^{-1} \circ g' \circ f'^{-1} : Y \dashrightarrow Y'$ pulls back an ample class to an ample class, so extends to an isomorphism, and that proves the claim.
    
    Consequently, we set $I_X := I_h$ for any birational map $h \in \Bir(X)$. Let $i \in I_X$ and consider the set $\Bir_{i,X} := \enstq{g \in \Bir(X)}{g^*(\Pi_i) \subset \Nef(X)}$. We claim that $\Bir_{i,X}$ is a left $\Aut(X)$-coset for the natural action of $\Aut(X)$ on $\Bir(X)$. Indeed, assume that for $g,h \in \Bir(X)$, we have $g^*(\Pi_i) \subset \Nef(X)$ and $h^*(\Pi_i) \subset \Nef(X)$. By taking an ample class $\alpha$ in the interior of $\Pi_i$, we see that $g^*(\alpha)$ and $h^*(\alpha)$ are ample lasses, and that $g^{-1} \circ h$ pulls back the ample class $g^*(\alpha)$ the the ample class $h^*(\alpha)$. Therefore, $g^{-1} \circ h$ extends to an isomorphism, and this proves the claim.
    
    Observe that
    \[\Nef^+(X) \subset \clMov^+(X) \subset \bigcup_{g \in \Bir(X)} g^*(\Pi) = \bigcup_{g \in \Bir(X), ~i \in I} g^*(\Pi_i).\]
    Indeed, the first inclusion follows from the fact that $\Amp(X) \subset \BAmp(X) \subset \Mov(X)$, the second inclusion holds by Theorem \ref{thm_conjcar0}.(1) and the last inclusion is just a consequence of (\ref{eq_subdivcone}). We deduce from (\ref{eq_pinef}) that $\Nef^+(X)$ is equal to the union of the translates of $\Pi_i$ by elements of $\Bir(X)$ intersecting its interior, which are in turn union of translates by elements of $\Aut(X)$ since the $\Bir_{i,X}$ are left $\Aut(X)$-cosets for all $i \in I_X$.
    
    Finally, let $G$ be the image of $\Aut(X)$ inside $O(\Pic(X)_{\tf})$ and let $y \in \Amp(X) \cap \Pic(X)_\Q$ be a rational ample class such that its stabilizer in $G$ is trivial. Consider the following set
    \[\cD_{y} := \enstq{x \in \Nef(X)}{\forall g \in G : q_X(g(y),x) \geq q_X(y,x)}.\]
    As $\Nef^+(X)$ is a union of $\Aut(X)$-translates of the finitely many rational polyhedral cones $\Pi_i$ for $i \in I_X$, we deduce that $\cD_{y}$ is a fundamental rational polyhedral domain under the action of $\Aut(X)$ on $\Nef^+(X)$ by \cite[Lemma 7.8]{lehn2024morrison}, and this completes the proof.
\end{proof}

\section{Relative cone conjecture for singular primitive symplectic varieties}
In this section, we will work exclusively over the field $\C$ of the complex numbers. This section is devoted to the proof of Theorem \ref{thm_relconjpsv}. 

\subsection{Preliminaries}\label{sect_relative_prelim}
We recall some classical definitions from birational geometry and the Minimal Model Program using the terminology of \cite[Section 2]{Kollár_Mori_1998}.

\begin{defn}
    Let $f : X \to S$ be a fibration between $\Q$-factorial normal varieties. Two divisors $D$ and $D'$ on $X$ are $f$-linearly equivalent, written $D \sim_{f} D'$ or $D \sim_S D'$, if their difference is linearly equivalent to the pullback of a divisor on $S$. We denote by $\Pic(X/S)$ the relative Picard group (of $f$), and for $\K = \Q$ or $\R$, we set
    \[\Pic(X/S)_{\K} = \Pic(X/S) \otimes_{\Z} \K.\]
\end{defn}

\begin{defn}
    Let $f : X \to S$ be a fibration between $\Q$-factorial normal varieties. A $\Q$-divisor $D$ on $X$ is
    \begin{itemize}
        \item $f$-effective if the restriction of $mD$ on the generic fibre $X_\eta$ is effective for some sufficiently divisible natural number $m$,
        \item $f$-movable if for some sufficiently divisible natural number $m$, we have $\codim(\Supp(\coker \operatorname{ev})) \geq 2$, where $\operatorname{ev}$ is the natural evaluation
        \[\operatorname{ev} : f^*f_*\Ox_X(mD) \to \Ox_X(mD).\]
    \end{itemize}
    An $\R$-divisor $D$ is $f$-effective, respectively $f$-movable, if it is a linear combination with non-negative real coefficients of $f$-effective, respectively $f$-movable, $\Q$-divisors.
\end{defn}

\begin{defn}
    Let $f : X \to S$ be a fibration between $\Q$-factorial normal varieties. Two $\R$-divisors $D$ and $D'$ on $X$ are $f$-numerically equivalent, written $D \equiv_{f} D'$ or $D \equiv_S D'$, if $D \cdot C = D' \cdot C$ for every curve $C \subset X$ such that $f(C)$ is a point. We denote by $N^1(X/S)$ the relative Néron-Severi group (of $f$), and for $\K = \Q$ or $\R$, we set
    \[N^1(X/S)_\K = N^1(X/S) \otimes_{\Z} \K.\]
\end{defn}

\begin{defn}
    Let $f : X \to S$ be a fibration between $\Q$-factorial normal varieties. Inside the vector space $N^1(X/S)_\R$, we define the following convex cones.
    \begin{itemize}
        \item The relative effective cone $\Eff(X/S)$, generated by $f$-effective divisors.
        \item The relative movable cone $\Mov(X/S)$, generated by $f$-movable divisors, and its closure $\clMov(X/S)$. We set $\clMov^e(X/S) := \clMov(X/S) \cap \Eff(X/S)$, and we define $\clMov^+(X/S)$ as the convex hull of $\clMov(X/S) \cap N^1(X/S)_\Q$ in $N^1(X/S)_\R$.
        \item The relative ample cone $\Amp(X/S)$, generated by $f$-ample divisors, and its closure $\Nef(X/S)$, called the relative nef cone. We set $\Nef^e(X/S) := \Nef(X/S) \cap \Eff(X/S)$, and we define $\Nef^+(X/S)$ as the convex hull of $\Nef(X/S) \cap N^1(X/S)_\Q$ in $N^1(X/S)_\R$. 
    \end{itemize}
\end{defn}

\begin{defn}
    A birational map $g : X \dashrightarrow Y$ between normal projective varieties is a contraction if its inverse $g^{-1}$ does not contract a divisor. In addition, if for two fibrations $f_X : X \to S$ and $f_Y : Y \to S$, we have $f_X = f_Y \circ g$ on the locus where $g$ is defined, we say that $g$ is a birational contraction over $S$.
\end{defn}

\begin{defn}
    Let $(X_1, \Delta_1)$ and $(X_2, \Delta_2)$ be two $\Q$-factorial klt pairs. We say that a birational map $\mu : X_1 \dashrightarrow X_2$ is a small $\Q$-factorial modification of $(X_1, \Delta_1)$ over $S$ if $\mu$ is an isomorphism in codimension $1$ over $S$ and if $\mu_*\Delta_1 = \Delta_2$, where $\mu_*\Delta_1$ is the birational transform of $\Delta_1$ by $\mu$.
\end{defn}

\begin{defn}
    Let $f : (X, \Delta) \to S$ be a fibration between normal quasi-projective varieties. A pseudoautomorphism of $(X,\Delta)$ is a birational map $\mu : X \dashrightarrow X$ over $S$ that is an isomorphism in codimension $1$ and that satisfies $\mu_*\Delta = \Delta$. The set of pseudoautomorphisms of $(X,\Delta)$ forms a group denoted by $\psaut(X/S,\Delta)$. When $\Delta = 0$, we simply write $\psaut(X/S)$. The set of automorphisms of $(X,\Delta)$ preserving the boundary $\Delta$ under birational transform will be denoted by $\Aut(X/S,\Delta)$.
\end{defn}

\begin{defn}\label{def_minmodel}
    Let $X \to S$ be a fibration with $X$ a normal projective variety. Let $\Delta$ be a divisor on $X$ such that $(X,\Delta)$ is a log-canonical pair. For every birational contraction $\phi : X \dashrightarrow Y$ of normal projective varieties over $S$, we set $\Delta_Y := \phi_* \Delta$. A \emph{minimal model} $(Y,\Delta_Y)$ of $(X,\Delta)$ over $S$ is a pair associated to a birational contraction $\phi : (X,\Delta) \dashrightarrow (Y,\Delta_Y)$ such that
    \begin{itemize}
        \item $Y$ is $\Q$-factorial,
        \item $K_Y + \Delta_Y$ is nef over $S$ (where $K_Y$ is the canonical divisor of $Y$), and
        \item $a(E,X,\Delta) > a(E,Y,\Delta_Y)$ for all $\phi$-exceptional divisors.
    \end{itemize}
    We will say that a minimal model $(Y,\Delta_Y)$ over $S$ is \emph{a good minimal model} if, in addition, $K_Y + \Delta_Y$ is semiample over $S$. A minimal model $(Y, \Delta_Y)$ together with the birational contraction $\phi$ is called a \emph{marked minimal model} of $(X, \Delta)$ over $S$.
\end{defn}
In the next section, we will also need to refer to the weak cone conjecture; we therefore take the opportunity in this preliminary section to introduce its definition following \cite{LOP2018ConeConjecture} and \cite{li2023relative}.

\begin{defn}
    Let $V$ be a real vector space, $C \subset V$ a cone and $\rho : \Gamma \hookrightarrow \GL(V)$ an injective group homomorphism. Let $\Pi \subset C$ be a (rational) polyhedral cone. Suppose that $\Gamma$ acts on $C$. Then $\Pi$ is called a weak (rational) polyhedral fundamental domain for $C$ under the action of $\Gamma$ if
    \begin{itemize}
        \item $\Gamma \cdot \Pi = C$, and
        \item for each $\gamma \in \Gamma$, either $\gamma \Pi = \Pi$ or $\gamma \Pi \cap \Pi^\circ = \emptyset$, where $\Pi^\circ$ is the interior of $\Pi$.
    \end{itemize}
    Moreover, let $\Gamma_{\Pi} := \enstq{\gamma \in \Gamma}{\gamma \Pi = \Pi}$. If $\Gamma_{\Pi} = \{\id\}$, then $\Pi$ is called a (rational) polyhedral fundamental domain.
\end{defn}

\begin{conj}[Weak Kawamata-Morrison cone conjectures]\label{conj_weak}
    Let $(X,\Delta) \to S$ be a fiber space with trivial canonical class as in Definition \ref{def_ktrivfibrespace}.
    \begin{itemize}
        \item $\clMov^+(X/S)$ has a weak rational polyhedral fundamental domain under the action of $\psaut(X/S,\Delta)$.
        \item $\Nef(X/S)^+$ has a weak rational polyhedral fundamental domain under the action of $\Aut(X/S,\Delta)$.
    \end{itemize}
\end{conj}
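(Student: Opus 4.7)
The plan is to reduce both weak cone conjectures in the setting of Theorem~\ref{thm_relconjpsv} to the absolute cone conjectures on the very general fibre $F$ (Theorem~\ref{thm_conjcar0}), following the blueprint of Hörring--Lehn--Mongardi--Pacienza in \cite{horing2024relative}. The central object is the restriction map $\rho : N^1(X/S)_\R \to N^1(F)_\R \simeq \Pic(F)_\R$, whose image lies in the monodromy-invariant subspace of $\Pic(F)_\R$ under the action of $\pi_1(S^\circ)$, where $S^\circ \subset S$ is the smooth locus of $f$. The first task is to establish, as in \cite[Lemma 3.1]{horing2024relative}, that this image equals the invariant subspace and that $\rho$ carries the relative ample, nef, big, movable, and effective cones into (or, after intersecting with the invariant subspace, onto) the corresponding absolute cones on $F$.

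Next I would treat the compatibility of the group actions. The natural homomorphisms $\psaut(X/S) \to \Bir(F)$ and $\Aut(X/S) \to \Aut(F)$ are $\rho$-equivariant and land in the monodromy-equivariant subgroups; conversely, a specialization-deformation argument combined with hypothesis~(ii) produces enough relative (pseudo)automorphisms to cover those monodromy-equivariant subgroups up to finite index. With this in place I would run an MMP relative to $S$ (which terminates under hypothesis~(ii), by \cite{birkar2010existence}) to enumerate the small $\Q$-factorial modifications $X_i \to S$. Finiteness of the $X_i$ up to $S$-isomorphism would follow from Theorem~\ref{thm_conjcar0}(1): via $\rho$, the relative nef cones $\Nef(X_i/S)$ correspond to chambers of a subdivision of the image of $\clMov^+(X/S)$ inside $\clMov^+(F)$, and this subdivision has only finitely many orbits under $\Bir(F)$ by Theorem~\ref{thm_conjcar0}(1).

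With the $X_i$ in hand, I would construct a weak rational polyhedral fundamental domain for $\psaut(X/S) \curvearrowright \clMov^+(X/S)$ by lifting, via $\rho$, the fundamental domain provided by Theorem~\ref{thm_conjcar0}(1) on $F$, intersecting with the image of $\clMov^+(X/S)$, and verifying polyhedrality from the finiteness established above. The relative nef part of Conjecture~\ref{conj_weak} (and hence the strong statement of Theorem~\ref{thm_relconjpsv}) is then deduced on each $X_i$ separately by the Sterk-type averaging argument already used in the proof of Theorem~\ref{thm_conjcar0}(2), applied to the action of $\Aut(X_i/S)$ on the appropriate union of relative nef chambers.

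The main obstacle, in my view, is the lifting step: showing that enough monodromy-invariant elements of $\Bir(F)$ actually arise from $\psaut(X/S)$ so that the induced action on $N^1(X/S)_\R$ admits a rational polyhedral fundamental domain. In \cite{horing2024relative} this is handled by combining Markman's deformation-theoretic lifting of birational self-maps with finiteness of minimal models on $F$. In our singular setting, the adaptation requires Theorem~\ref{thm_conjcar0} together with the descent-type results of Section~\ref{section_psvdef} (in particular Lemmas~\ref{lem_biriso} and \ref{lem_piciso}) to propagate birational maps from $F$ to $X$ across specialization, together with care to ensure the extended maps are small, i.e.\ genuine relative pseudoautomorphisms rather than merely birational.
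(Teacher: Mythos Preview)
The statement you were asked to prove is Conjecture~\ref{conj_weak}, which is a \emph{conjecture} in the paper and has no proof there; your proposal is really a strategy for the special case covered by Theorem~\ref{thm_relconjpsv}, so I will compare against the paper's proof of that theorem.

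Your route and the paper's diverge at the very first step. You restrict to a \emph{very general closed fibre} $F$ over a point $s \in S(\C)$ and work with the monodromy-invariant part of $\Pic(F)_\R$; the paper instead restricts to the \emph{generic fibre} $X_\eta$ over the function field $\C(S)$ and applies Theorem~\ref{thm_conjcar0} directly to that non-closed-field primitive symplectic variety. This is not a cosmetic difference: the entire purpose of proving Theorem~\ref{thm_conjcar0} over arbitrary fields of characteristic zero (rather than just $\C$) is to make this generic-fibre reduction available. Once one has the absolute movable cone conjecture for $X_\eta$, the passage to the weak relative conjecture for $X/S$ is a formal consequence of \cite[Theorem 6.1]{li2023relative}, and the upgrade from weak to strong (plus finiteness of SQMs and the relative nef statement) comes from \cite[Lemma 2.18, Proposition 4.3]{horing2024relative} together with Proposition~\ref{prop_nefmovrel}.

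The obstacle you flag --- lifting monodromy-invariant elements of $\Bir(F)$ to $\psaut(X/S)$ --- is precisely the difficulty that the paper's approach circumvents. Over the generic point one has $\Bir(X_\eta) = \psaut(X_\eta)$ tautologically (since $X_\eta$ is $\Q$-factorial terminal), and the comparison $\psaut(X/S) \to \psaut(X_\eta)$ is handled inside the machinery of \cite{li2023relative} without any need to deform birational maps along the family. Your proposed specialization/deformation argument for lifting $\Bir(F)$ is not developed in the singular setting and would require substantial new input (the references you cite, such as Lemmas~\ref{lem_biriso} and \ref{lem_piciso}, compare two algebraically closed fields and do not address spreading out over a base). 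So while your outline is coherent, it trades a one-line reduction for a genuinely hard lifting problem that the paper was designed to avoid.
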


\subsection{Proof of the relative cone conjecture}\label{sect_relconeconj}
We first recall the following lemma which relies on \cite[Lemma 3.7]{li2023relative} and the proof of \cite[Theorem 1.3]{li2023relative}. 

\begin{lem}[Lemma 5.1 - \cite{horing2024relative}]\label{lem_surjconens}
    Let $X$ be a $\Q$-factorial variety, and let $f : X \to S$ be a fibration. Denote by $\eta \in S$ the generic point of the base, and by $X_\eta$ the generic fiber of $f$. We have a surjective map
    \[r : N^1(X/S)_\R \to N^1(X_\eta)_\R\]
    which induces the surjective maps
    \[\Eff(X/S) \to \Eff(X_\eta) ~\text{and}~ \clMov(X/S) \to \clMov(X_\eta)\]
\end{lem}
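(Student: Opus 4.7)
The plan is to define $r$ as the restriction-to-generic-fiber map: for a Cartier divisor $D$ on $X$, send its relative numerical class to the class of $D|_{X_\eta}$ in $N^1(X_\eta)_\R$. First I would check well-definedness. If $D \equiv_f 0$, then for any curve $C_\eta \subset X_\eta$ I want $D \cdot C_\eta = 0$. Spreading out $C_\eta$, the closure $\overline{C_\eta} \subset X$ is flat over some open subset $U \subset S$ with fibers that are curves contained in the fibers of $f$, hence contracted to points. Since intersection numbers are constant in flat families, $D \cdot C_\eta = D \cdot C_s$ for a closed point $s \in U$, and this vanishes by assumption. Well-definedness of the induced maps on relatively effective and movable divisor classes requires similarly checking that divisors pulled back from $S$ vanish on $X_\eta$, which is immediate.

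For surjectivity of $r$ itself: a Weil divisor $D_\eta$ on $X_\eta$ admits a scheme-theoretic closure $\overline{D_\eta}$ in $X$ that is a Weil divisor with $\overline{D_\eta}|_{X_\eta} = D_\eta$. Since $X$ is $\Q$-factorial, $\overline{D_\eta}$ has a well-defined class in $N^1(X/S)_\R$, and extending by $\R$-linearity gives a set-theoretic section of $r$. For surjectivity onto $\Eff(X_\eta)$: if $D_\eta \geq 0$ on $X_\eta$, then $\overline{D_\eta}$ is effective, and $\overline{D_\eta}|_{X_\eta} = D_\eta$ shows by definition that $\overline{D_\eta}$ is $f$-effective, so $[\overline{D_\eta}] \in \Eff(X/S)$ maps to $[D_\eta]$.

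For surjectivity onto $\clMov(X_\eta)$, by density it suffices to produce a preimage in $\clMov(X/S)$ for each rational movable class $[D_\eta]$ with $D_\eta$ an integral movable divisor on $X_\eta$. The natural candidate is again the closure $\overline{D_\eta}$. For sufficiently divisible $m$, the base locus of $|mD_\eta|$ on $X_\eta$ has codimension at least $2$. The key point is that sections of $\Ox_{X_\eta}(mD_\eta)$ correspond, after possibly shrinking $S$ around $\eta$, to sections of $f_*\Ox_X(m\overline{D_\eta})$; taking the closures of the corresponding base loci on $X_\eta$ yields a subscheme of $X$ of codimension at least $2$ (over the generic point of $S$) that contains the support of the cokernel of the evaluation map $f^*f_*\Ox_X(m\overline{D_\eta}) \to \Ox_X(m\overline{D_\eta})$ above $\eta$, which is enough to conclude that $\overline{D_\eta}$ is $f$-movable. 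This is precisely the content of \cite[Lemma 3.7]{li2023relative}, which I would invoke rather than reprove.

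The main obstacle is precisely this last step: the $f$-movability condition is phrased in terms of the codimension of the cokernel of a relative evaluation map on $X$, whereas movability on the generic fiber is phrased fiberwise, and bridging the two requires care about base-change of pushforwards and about components of the base scheme that could be vertical with respect to $f$. However, once one shrinks $S$ to a neighbourhood of the generic point, vertical components are irrelevant for testing $f$-movability, and the result of \cite{li2023relative} makes this rigorous.
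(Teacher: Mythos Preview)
The paper does not supply its own proof of this lemma: it is stated as a citation of \cite[Lemma 5.1]{horing2024relative}, with the remark that it ``relies on \cite[Lemma 3.7]{li2023relative} and the proof of \cite[Theorem 1.3]{li2023relative}.'' Your proposal is therefore not competing with a proof in the paper but rather reconstructing the argument behind the cited result, and your reconstruction is sound: the well-definedness of $r$, the surjectivity via taking closures of Weil divisors (using $\Q$-factoriality of $X$), and the effective case are all routine, and for the movable case you correctly identify and invoke \cite[Lemma 3.7]{li2023relative}, which is exactly the reference the paper itself points to. The only part of the paper's attribution you do not explicitly use is the proof of \cite[Theorem 1.3]{li2023relative}; this is where the passage from movability on $X_\eta$ to $f$-movability of the closure is carried out in detail, so your ``shrinking $S$'' heuristic is a gloss on that argument rather than an independent one, but since you explicitly defer to \cite{li2023relative} at that point there is no gap.
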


\begin{rmk}
    Let $f : (X,\Delta) \to S$ be a $K$-trivial fiber space $f : (X,\Delta) \to S$ (cf. Definition \ref{def_ktrivfibrespace}) whose very general fibre $X_s$ has a vanishing irregularity $q(X_s)$, and assume the base $S$ is $\Q$-factorial. As it was observed in \cite[Remark 1.6]{horing2024relative} and in the remark following \cite[Lemma 5.1]{horing2024relative}, the following properties hold:
    \begin{itemize}
        \item the cone $\clMov(X/S)$ is non-degenerate;
        \item $R^1 f_*\Ox_X = 0$;
        \item $N^1(X/S)_\Q \simeq \Pic(X/S)_\Q$;
        \item $N^1(X_\eta)_\Q \simeq \Pic(X_\eta)_\Q$.
    \end{itemize}
\end{rmk}

\begin{defn}
    Let $f : X \to S$ be a fibration between complex quasi-projective varieties. If $\eta \in S$ denotes the generic point of $S$, we denote by $\overline{\eta} \in S$ the image of the composition $\Spec \overline{\C(S)} \to \Spec \C(S) \to S$, where $\C(S)$ is the function field of $S$. We call $\overline{\eta}$ the \emph{geometric generic point} of $S$, and $X_{\overline{\eta}}$ the \emph{geometric generic fibre} of $f$.
\end{defn}

\begin{lem}\label{lem_genfibrepsv}
    Let $X$ and $S$ be complex quasi-projective varieties, and let $f : X \to S$ be a projective fibration. Denote by $\eta \in S$ the generic point of $S$ and set $F := \C(S)$. If the very general fibre $X_s$ of $f$ is a complex projective primitive symplectic variety with $\Q$-factorial terminal singularities and $b_2(X_s) \geq 5$, then the generic fibre $X_{\eta}$ (respectively, the geometric generic fibre $X_{\overline{\eta}}$) is a projective primitive symplectic variety over $F$ (respectively, over $\overline{F}$) with $\Q$-factorial terminal singularities and $b_2(X_{\eta}) \geq 5$.
\end{lem}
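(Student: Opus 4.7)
The strategy is a spreading out argument: after shrinking $S$ to a non-empty Zariski open (which leaves $X_\eta$ unchanged), arrange that $f$ is flat, $S$ is smooth, and we have a fixed resolution $\pi : Y \to X$ (viewed relatively over $S$) such that $\pi$ is an isomorphism over the relative regular locus. The fibres over the very general closed points of $S$ satisfy all the properties of a $\Q$-factorial terminal PSV by hypothesis, and we will transfer each defining property to the generic fibre using that very generality implies the property holds over an open dense subset.

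First I would handle the routine properties. Projectivity of $X_\eta$ over $F$ is immediate from projectivity of $f$. Geometric normality and geometric integrality are open conditions in the flat family $f$; since both hold on the very general fibre they hold on a dense open of $S$, and therefore on $X_\eta$. Terminality is likewise preserved on a dense open (one can see this from resolution of singularities and openness of the condition $a(E,X)>0$ for every exceptional divisor, using the family resolution $\pi$). For the cohomological requirements, apply semi-continuity to the flat family $f$: since $H^1(X_s,\cO_{X_s})=0$ and $h^0(X_s,\Omega^{[2]}_{X_s})=1$ for very general closed $s$, upper semi-continuity of coherent cohomology yields $H^1(X_\eta,\cO_{X_\eta})=0$ and $H^0(X_\eta,\Omega^{[2]}_{X_\eta/F})=F\cdot\sigma_\eta$ for a distinguished class $\sigma_\eta$. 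For the existence and extension of the symplectic form: the reflexive sheaf $\Omega^{[2]}_{X/S}$ restricts on each fibre to the reflexive 2-forms; a generator of its pushforward over a small enough open $U\subset S$ produces a form that is non-degenerate at every point of $(X/U)^{\reg}$ (non-degeneracy is open and holds on the very general fibre). Passing to the generic fibre yields the symplectic 2-form $\sigma_\eta$ on $X_\eta^{\reg}$. For extension, the relative form $\pi^*\sigma \in H^0(\pi^{-1}(X^{\reg}/U),\Omega^2_{Y/S})$ extends to a global section of $\Omega^2_{Y/S}$ on an open of $S$ (the obstruction vanishes on a fibre, hence generically on $S$ by a constructibility argument applied to the cokernel of the restriction map), and the restriction of this extension to $Y_\eta$, a resolution of $X_\eta$, gives the required global $2$-form.

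The main obstacle is $\Q$-factoriality of $X_\eta$, which is not an obviously open condition in families. I would handle it as follows. Let $D$ be a prime Weil divisor on $X_\eta$; by standard spreading-out, $D$ is the restriction of a relative Weil divisor $\tilde D$ on $X_U$ for some non-empty open $U\subset S$. For a very general point $s\in U$, the restriction $\tilde D_s$ is a Weil divisor on the $\Q$-factorial variety $X_s$, hence $m\tilde D_s$ is Cartier for some positive integer $m=m(s)$. Since $H^1(X_s,\cO_{X_s})=0$ on all fibres over a dense open and Picard groups specialize well in such flat families (cohomology and base change gives an isomorphism between $\Pic(X_\eta)$ and $\Pic(X_s)$ modulo torsion for very general $s$, after possibly shrinking $U$), the torsion subgroup of $\operatorname{Cl}(X_s)/\Pic(X_s)$ is constant in the family, so $m$ can be chosen independently of $s$. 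Then the locus in $U$ on which $m\tilde D$ is Cartier is constructible and contains a very general point, hence contains the generic point, which shows $mD$ is Cartier on $X_\eta$.

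Putting these ingredients together yields the lemma. The only non-formal input beyond openness of standard properties is the control of the class group in the flat family $f$, which is what makes $\Q$-factoriality genuinely transfer from very general fibres to the generic fibre in our PSV setting.
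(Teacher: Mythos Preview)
Your spreading-out strategy is a natural alternative to the paper's proof, but the $\Q$-factoriality step contains a real gap. You spread a prime Weil divisor $D$ on $X_\eta$ to $\tilde D$ on $X_U$ and then need a single $m$ such that $m\tilde D_s$ is Cartier for all very general $s$. Your justification---that ``the torsion subgroup of $\operatorname{Cl}(X_s)/\Pic(X_s)$ is constant in the family'' because Picard groups specialize well---does not hold up: cohomology and base change controls $\Pic$, not $\operatorname{Cl}$, and there is no general mechanism making the local class groups (which govern the Cartier index) constant along a flat family of singular varieties. Even granting a uniform $m$, the assertion that ``the locus in $U$ on which $m\tilde D$ is Cartier is constructible'' conflates two different conditions: $m\tilde D$ being Cartier on the total space near $X_s$, versus the restriction $m\tilde D|_{X_s}$ being Cartier on the fibre $X_s$. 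The hypothesis only gives the latter, which does not obviously cut out a constructible subset of $U$, and in any case does not propagate to Cartierness of $m\tilde D$ on an open of $X_U$. (Your other steps also hide nontrivial issues---e.g.\ reflexive hulls such as $\Omega^{[2]}_{X/S}$ do not commute with base change, so the semicontinuity argument for $h^0(\Omega^{[2]})$ needs more care---but these are more readily patched.)

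The paper bypasses all of this by working with the \emph{geometric} generic fibre and invoking a Lefschetz-principle type lemma (Vial): over $\C$ one can find a very general closed point $s\in S$, an abstract field isomorphism $\phi:\C\to\overline{F}$, and a scheme isomorphism $X_{\overline\eta}\simeq X_s\otimes_{\C,\phi}\overline{F}$. Then every property of $X_s$---being a primitive symplectic variety, $\Q$-factoriality, terminality---transfers to $X_{\overline\eta}$ by base change along $\phi$ (using the base-change lemmas already established in the paper), and one descends from $X_{\overline\eta}$ to $X_\eta$ via Lemma~\ref{lem_descentofpsvfg} together with known descent results for $\Q$-factoriality and terminality along algebraically closed extensions. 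This route treats $\Q$-factoriality and the symplectic structure uniformly and avoids any family-wise control of class groups; if you want to salvage your approach, the cleanest fix is to replace the flawed $\Q$-factoriality paragraph by exactly this geometric-generic-fibre comparison.
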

\begin{proof}
    By Lemma \ref{lem_descentofpsvfg}, in order to show that $X_{\eta}$ is a projective primitive symplectic variety over $F$, it suffices to prove that $X_{\overline{\eta}}$ is a primitive symplectic variety over $\overline{F}$. One way to proceed is to exploit the fact that we are working over $\C$ and to apply \cite[Lemma 2.1]{vial2013algebraic}. We can then find a closed point $s \in S$ such that $X_s$ is a complex primitive symplectic as in the statement of this lemma, an (abstract) field isomorphism $\phi : \C \to \overline{F}$, and an isomorphism of schemes $\alpha : X_{\overline{\eta}} \to X_s$ over $\Spec(\phi)$. It follows that
    \[X_{\overline{\eta}} \simeq X_s \otimes_{\C,\phi} \overline{F}\]
    is a projective primitive symplectic variety over $\overline{F}$ by Lemma \ref{lem_extofPSV}, and $b_2(X_{\eta}) \geq 5$. Moreover, the isomorphism $\alpha$ induces isomorphisms between the divisor class groups and the Picard groups of $X_s$ and $X_{\overline{\eta}}$, and the following diagram is commutative.
    \begin{center}
        \begin{tikzcd}
\Pic(X_s) \otimes \Q \arrow[d] \arrow[rr, "\simeq"]    &  & \Pic(X_{\overline{\eta}}) \otimes \Q \arrow[d]    \\
\operatorname{Cl}(X_s) \otimes \Q \arrow[rr, "\simeq"] &  & \operatorname{Cl}(X_{\overline{\eta}}) \otimes \Q
\end{tikzcd}
    \end{center}
    It follows that $X_s$ is $\Q$-factorial if and only if $X_{\overline{\eta}}$ is $\Q$-factorial. Since $K_{X_{\overline{\eta}}} = \alpha^*K_{X_s}$ we also see that the singularities of $X_s$ are terminal if and only if those of $X_{\overline{\eta}}$ are also terminal. Therefore, by \cite[Lemma 6.18]{das2022lmmp} and \cite[Proposition 2.15]{kollar2013singularities}, the singularities of $X_{\eta}$ are $\Q$-factorial and terminal, and that completes the proof.
\end{proof}
We now state a weaker version of \cite[Lemma 2.17]{denisi2022pseudo} in the singular setting, which will be useful in what follows.

\begin{lem}\label{lem_nefsemiample}
	Let $X$ be a complex $\Q$-factorial projective primitive symplectic variety with terminal singularities. Suppose that for every birational map $\phi : X \dashrightarrow X'$ with $X'$ another $\Q$-factorial projective primitive symplectic variety whose singularities are terminal, every nef divisor on $X'$ is semi-ample. Then $\clMov^+(X) \subseteq \clMov^e(X)$.
\end{lem}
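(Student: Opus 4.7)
The plan is to reduce the inclusion to showing that every rational class in $\clMov(X)$ is effective, and to produce an effective representative by transporting the class, via an element of $\Bir(X)$, into the nef cone of an appropriate $\Q$-factorial terminal birational model of $X$, where the semi-ampleness hypothesis applies.

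First, since $\Eff(X)$ is a convex cone and since $\clMov^+(X)$ is by definition the convex hull of $\clMov(X) \cap N^1(X)_\Q$, the lemma reduces to proving $\clMov(X) \cap N^1(X)_\Q \subseteq \Eff(X)$, because $\clMov^+(X) \subseteq \clMov(X)$ is automatic. Fix such a rational class $\alpha$. By Theorem \ref{thm_conjcar0}.(1) there exists a rational polyhedral fundamental domain $\Pi$ for the action of $\Bir(X)$ on $\clMov^+(X)$, so we may choose $g \in \Bir(X)$ with $g^*\alpha \in \Pi$.

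Next I would invoke the decomposition $\Pi = \bigcup_{i \in I} \Pi_i$ of $\Pi$ into finitely many rational polyhedral subcones constructed in the proof of Theorem \ref{thm_conjcar0}.(2), obtained by cutting $\Pi$ along the hyperplanes $\delta^\perp$ for $\delta$ in the finite set $\Sigma_\Pi$. The key geometric observation is that the interior of each cell lies in a single chamber of $\BAmp(X)$: indeed $\Pi_i^\circ \subseteq \Pi^\circ \subseteq \Mov^\circ(X)$, and by (the proof of) Lemma \ref{lem_bampmov} we have $\Mov^\circ(X) \subseteq \BAmp(X) = \bigcup_Y f_Y^*\Amp(Y)$, where the chambers $f_Y^*\Amp(Y)$ (one per isomorphism class of $\Q$-factorial terminal birational model) are pairwise disjoint. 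Since $\Pi_i^\circ$ is connected, it lies in a single chamber $f_i^*\Amp(Y_i)$, and taking closures gives $\Pi_i \subseteq f_i^*\Nef(Y_i)$, where $f_i : X \dashrightarrow Y_i$ is a birational map to a $\Q$-factorial terminal projective primitive symplectic variety. Picking $i$ with $g^*\alpha \in \Pi_i$, we may write $g^*\alpha = f_i^*\beta$ for some $\beta \in \Nef(Y_i)$. Since $f_i$ is an isomorphism in codimension one by \cite[Corollary 3.54]{Kollár_Mori_1998}, the induced pullback $f_i^* : N^1(Y_i)_\Q \to N^1(X)_\Q$ is injective, so $\beta$ is rational.

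By the standing hypothesis, $\beta$ is semi-ample on $Y_i$, hence effective. The main technical step to articulate carefully is that pullback along a birational map between $\Q$-factorial normal varieties which is an isomorphism in codimension one preserves effectivity of $\Q$-Cartier $\Q$-divisors: such a divisor is determined by its restriction to any open subset with complement of codimension $\geq 2$, and on the locus where the birational map is an isomorphism the pullback coincides with the strict transform, which has the same non-negative coefficients. Applying this observation first to $f_i$ and then to $g^{-1} \in \Bir(X)$ (also an isomorphism in codimension one, as $X$ is $\Q$-factorial terminal), we conclude successively that $f_i^*\beta = g^*\alpha$ lies in $\Eff(X)$ and finally that $\alpha = (g^{-1})^*(g^*\alpha) \in \Eff(X)$, which gives the desired inclusion.
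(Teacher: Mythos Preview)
Your argument is correct and rests on the same core idea as the paper: transport the rational class to a nef class on some $\Q$-factorial terminal birational model, invoke the semi-ampleness hypothesis there, and pull effectivity back through an isomorphism in codimension one. The difference is in how you locate that model.

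You reach it by first translating $\alpha$ into the fundamental domain $\Pi$ via Theorem~\ref{thm_conjcar0}.(1), and then using the subdivision $\Pi = \bigcup_i \Pi_i$ from the proof of Theorem~\ref{thm_conjcar0}.(2). This works, but both of those results carry the hypothesis $b_2(X) \geq 5$, which the present lemma does not assume; so your proof as written establishes only a weaker statement. The paper instead appeals directly to \cite[Proposition~5.8]{lehn2024morrison} (the wall-and-chamber decomposition underlying $\overline{\BAmp(X)} = \clMov(X)$, cf.\ Lemma~\ref{lem_bampmov}) to produce in one step a birational map $\phi : X \dashrightarrow X'$ with $\phi_*\alpha$ nef, with no Betti-number restriction and no preliminary $\Bir(X)$-translate. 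After that, the finish is identical to yours. In your notation the composite $f_i \circ g^{-1}$ already is such a $\phi$, so the passage through $\Pi$ buys nothing; dropping it both shortens the argument to a few lines and removes the spurious hypothesis.
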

\begin{proof}
Since the interiors of the cones are the same, we only have to check that the boundary of the left-hand cone is included in the right-hand cone. Let $\alpha \in \partial \clMov^+(X)$ and, without loss of generality, suppose that $\alpha$ is integral. Since $\overline{\BAmp(X)} = \clMov(X)$ by \cite[Proposition 5.8]{lehn2024morrison}, there exists a projective primitive symplectic variety $X'$ with $\Q$-factorial and terminal singularities, and a birational map $\phi : X \dashrightarrow X'$ such that $\phi_*\alpha$ is integral and nef. Thanks to our assumptions, $\phi_*\alpha$ is semi-ample on $X'$, and is therefore effective. Since $\phi$ is an isomorphism in codimension one, $\alpha$ is effective and this concludes the proof.
\end{proof}
The following result is an adaptation of \cite[Proposition 5.5]{horing2024relative} to the singular setting.

\begin{prop}\label{prop_nefmovrel}
	Let $f : X \to S$ be a projective fibration between $\Q$-factorial varieties, and let $X_s$ denote a very general fibre of $f$. Let $\eta \in S$ be the generic point of $S$, and denote by $j : X_\eta \to X$ the inclusion. Then the following assertions hold.
	\begin{enumerate}
	\item If $\Nef^+(X_s) \subseteq \Nef^e(X_s)$, then $\Nef^+(X_\eta) \subseteq \Nef^e(X_\eta)$.
	\item Assume that $X_s$ is a complex projective primitive symplectic variety with $\Q$-factorial terminal singularities. If the good minimal models exist for effective klt pairs on $X_s$, then 
	\[\clMov^e(X_\eta) = \clMov^+(X_\eta) \text{ and } \Mov^+(X/S) = \Mov^e(X/S).\]
	\end{enumerate}
\end{prop}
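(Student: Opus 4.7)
The plan is to ``spread out'' classes from the generic fibre $X_\eta$ to Cartier divisors on $X$, exploit the hypothesis on the very general fibre $X_s$ to secure effectivity there, and then transfer this effectivity back to $X_\eta$ via a Baire-category argument combined with upper-semicontinuity of $h^0$. The same mechanism handles both items; only the input used to establish effectivity on $X_s$ differs.

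For part (1), let $\alpha \in \Nef^+(X_\eta) \cap \Pic(X_\eta)_\Q$. By Lemma \ref{lem_surjconens} (at the $\Q$-level, using the identification $N^1_\Q \simeq \Pic_\Q$ from the subsequent remark) I lift $m\alpha$ to a Cartier divisor $D$ on $X$ with $D|_{X_\eta} = m\alpha$ for some integer $m \geq 1$. A standard spreading-out argument shows that $D|_{X_s}$ remains nef on a Zariski-open subset of $S$, so for very general $s$ one has $D|_{X_s} \in \Nef^+(X_s) \subseteq \Nef^e(X_s)$ by hypothesis; hence some positive integer multiple of $D|_{X_s}$ has a global section. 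After shrinking $S$ so that $f$ is flat over it, define
\[ V_k := \{\, s \in S : h^0(X_s, k D|_{X_s}) \geq 1 \,\} \]
for each $k \geq 1$. By upper-semicontinuity of $h^0$, each $V_k$ is Zariski-closed. By hypothesis, $\bigcup_k V_k$ contains the complement of some countable union of proper Zariski-closed subsets of $S$; since such a complement is non-meager in $S(\C)$ (Baire property), at least one $V_k$ must equal all of $S$. In particular $h^0(X_\eta, kD|_{X_\eta}) \geq 1$, so $\alpha \in \Nef^e(X_\eta)$.

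For part (2), the same Baire scheme applies once rational classes in $\clMov(X_s)$ are shown to be effective on $X_s$. Given rational $\alpha \in \clMov^+(X_\eta)$, lift to $D$ as above; for very general $s$, $D|_{X_s}$ is a rational movable class on the projective $\Q$-factorial terminal primitive symplectic variety $X_s$ (using Lemma \ref{lem_genfibrepsv}). Running a log-MMP with scaling on the klt pair $(X_s, \epsilon D|_{X_s})$ for small rational $\epsilon > 0$ produces only flips, by the same argument as in the proof of Lemma \ref{lem_bampmov}; we reach a small $\Q$-factorial modification $\phi : X_s \dashrightarrow X_s'$ on which $\phi_*(D|_{X_s})$ is nef, hence semiample by the good-minimal-model assumption, and therefore effective. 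Since $\phi$ is an isomorphism in codimension one, $D|_{X_s}$ itself is effective, and the Baire argument of part (1) then yields $\alpha \in \clMov^e(X_\eta)$. The reverse inclusion $\clMov^e(X_\eta) \subseteq \clMov^+(X_\eta)$ follows from the divisorial Zariski decomposition for primitive symplectic varieties (transported from the complex case via the base-change tools of Section 2): any class in $\clMov(X_\eta) \cap \Eff(X_\eta)$ has trivial negative part and thus decomposes as a non-negative $\R$-combination of rational classes in $\clMov(X_\eta)$. The relative equality $\Mov^+(X/S) = \Mov^e(X/S)$ then follows by combining the generic-fibre statement with schematic closure: for rational $\alpha \in \Mov^+(X/S)$ the restriction $\alpha|_{X_\eta}$ is effective by the above, and taking the schematic closure in $X$ of an effective representative of $m\alpha|_{X_\eta}$ produces an $f$-effective divisor witnessing $\alpha \in \Mov^e(X/S)$.

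I expect the main obstacle to be ensuring the Baire-category step goes through cleanly: one must verify that each $V_k$ is Zariski-closed (requiring flatness of $f$, achieved after shrinking $S$) and that the ``very general'' convention in the paper yields a set large enough to force some $V_k$ to fill $S$. A secondary subtlety lies in the reverse inclusion in part (2), which depends on the rational structure of $\clMov(X_\eta)$ --- standard over $\C$ via divisorial Zariski decomposition but requiring the base-change results of Section 2 to apply over the generic fibre.
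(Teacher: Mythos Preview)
Your argument for part~(1) is close to the paper's and morally correct, but the sentence ``$D|_{X_s}$ remains nef on a Zariski-open subset of $S$'' is false as stated: nefness is not an open condition. The paper handles this by perturbing with an $f$-ample divisor $H$, using openness of ampleness for each $D+\tfrac{1}{m}H$, and intersecting over $m$ to obtain nefness on a countable intersection of opens. You then land on ``very general $s$'' anyway, so this is a slip rather than a structural error. Your Baire/semicontinuity step is a perfectly good way to pass from effectivity on very general fibres to effectivity on $X_\eta$; the paper asserts this passage without comment.

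The real gap is in part~(2). You write ``running a log-MMP with scaling on the klt pair $(X_s,\epsilon D|_{X_s})$'', but forming that pair already requires $D|_{X_s}$ to be effective, which is precisely the conclusion you are after. Even read as a bare $D$-MMP in the spirit of Lemma~\ref{lem_bampmov}, that lemma treats classes in $\Mov^\circ(X_s)$, which are big, so BCHM termination applies; a rational class on the boundary of $\clMov(X_s)$ need not be big, and your appeal to ``the same argument'' does not cover it. The subsequent step ``nef, hence semiample by the good-minimal-model assumption'' has the same circularity: the hypothesis concerns \emph{effective} klt pairs, so it yields semiampleness only once effectivity is already in hand.

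The paper avoids both problems. It does not try to show $D|_{X_s}$ is movable on the very general fibre (your spreading claim for movability is also unjustified); instead it shows only that $D|_{X_s}$ is \emph{pseudoeffective}, via the same ample-perturbation trick. It then invokes the divisorial Zariski decomposition for primitive symplectic varieties \cite[Theorem~3.2]{lehn2024footnotes} to write $D|_{X_s}=P_s+N_s$ with $P_s\in\clMov^+(X_s)$ and $N_s$ effective, and applies Lemma~\ref{lem_nefsemiample} to $P_s$. That lemma rests on the structural fact $\overline{\BAmp(X_s)}=\clMov(X_s)$, which places $P_s$ in some $\phi^*\Nef(X_s')$ without running an MMP for $P_s$ itself, and hence sidesteps the bigness/effectivity issue. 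Your reverse inclusion $\clMov^e(X_\eta)\subseteq\clMov^+(X_\eta)$ via Zariski decomposition on $X_\eta$ and the schematic-closure argument for the relative equality are reasonable, and parallel what the paper does by citing \cite[Lemma~3.4]{horing2024relative}.
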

\begin{proof}
(1) Let $D_\eta \in \Nef^+(X_\eta)$ and, without loss of generality, assume that $D_\eta$ is rational. By Lemma \ref{lem_surjconens}, we may choose $D \in N^1(X)_\Q$ such that $D \vert_{X_\eta} = D_\eta$. Let $H$ be an $f$-ample divisor on $X$. Observe that, for any integer $m > 0$, the divisor
\begin{equation}\label{eq_divample}
\left(D + \dfrac{1}{m}H \right)\vert_{X_\eta}
\end{equation}
is ample on $X_\eta$. Moreover, by \cite[Corollaire 9.6.4]{grothendieck1966elements}, for each such integer $m$, there exists an open subset $U_m \subseteq S$ such that $\left(D + m^{-1}H \right)\vert_{X_s}$ is ample on $X_s$ for all $s \in U_m$. By (\ref{eq_divample}), these open subsets are non-empty. Set $U := \cap_{m \geq 1} U_m$. By construction, $D \vert_{X_s}$ is nef for all $s \in U$, and our hypothesis implies that $D \vert_{X_s}$ is also effective for all $s \in U$. Hence $D \in \Eff(X/S)$, and therefore $D_\eta \in \Eff(X_\eta)$.

(2) We first prove the inclusion $\clMov^e(X_\eta) \subset \clMov^+(X_\eta)$. Let $D_\eta \in \clMov^e(X_\eta)$. By Lemma \ref{lem_surjconens}, there exists $D \in \clMov^e(X/S)$ such that $D \vert_{X_\eta} = D_\eta$. By \cite[Lemma 3.4]{horing2024relative}, we have $D \in \clMov^+(X/S)$, thus $D_\eta \in \clMov^+(X_\eta)$. For the reverse inclusion $\clMov^+(X_\eta) \subset \clMov^e(X_\eta)$, let $D_\eta \in \clMov^+(X_\eta)$ and assume, without loss of generality, that $D_\eta$ is rational. By Lemma \ref{lem_surjconens}, we may choose $D \in N^1(X/S)_\Q$ such that $D\vert_{X_\eta} = D_\eta$. Let $H$ be an $f$-ample divisor on $X$. For any integer $m > 0$, the divisor
\begin{equation}\label{eq_divbig}
\left( D + \dfrac{1}{m}H \right)\vert_{X_\eta}
\end{equation}
is big on $X_\eta$, thus we may write $(D + m^{-1}H)\vert_{X_\eta} = A_{m,\eta} + E_{m,\eta}$ with $A_{m,\eta} \in \Amp(X_\eta)$ and $E_{m,\eta} \in \Eff(X_\eta)$. Choose rational classes $A_m \in N^1(X/S)_\Q$ and $E_m \in \Eff(X/S)$ such that $(A_m)\vert_{X_\eta} = A_{m,\eta}$ and $(E_m)\vert_{X_\eta} = E_{m,\eta}$. As above, for each integer $m > 0$, there exists a non-empty open subset $U_m \subseteq S$ such that $(D+m^{-1}H)\vert_{X_s} = (A_m + E_m)\vert_{X_s}$ is big for all $s \in U_m$. Setting again $U := \cap_{m \geq 1} U_m$, it follows that $D\vert_{X_s}$ is pseudoeffective for all $s \in U$. By \cite[Theorem 3.2]{lehn2024footnotes}, we may write $D\vert_{X_s} = P_s + N_s$, with $P_s \in \clMov^+(X_s)$ and $N_s \in \Eff(X_s)$. Since $\clMov^+(X_s) \subseteq \clMov^e(X_s)$ by Lemma \ref{lem_nefsemiample}, it follows that $D\vert_{X_s}$ is effective. Therefore, $D$ is $f$-effective, and hence $D_\eta$ is effective.

As for the equality $\clMov^+(X/S)=\clMov^e(X/S)$, the inclusion $\clMov^+(X/S) \subset \clMov^e(X/S)$ can be proved in the same way as above. The reverse inclusion follows from \cite[Lemma 3.4]{horing2024relative}, which concludes the proof.
\end{proof}
We can now prove our final theorem.

\begin{proof}[Proof of Theorem \ref{thm_relconjpsv}]
By Lemma \ref{lem_genfibrepsv}, the generic fibre $X_\eta$ is a projective primitive symplectic variety over $F = \C(S)$ with $b_2(X_\eta) \geq 5$ such that $X_{\overline{\eta}}$ has $\Q$-factorial terminal singularities. Theorem \ref{thm_conjcar0}.(2) implies that $\clMov^+(X_\eta)$ admits a rational polyhedral fundamental domain under the action of $\Bir(X_\eta)$. Under our assumptions, we have $\clMov^+(X_\eta) = \clMov^e(X_\eta)$ by Proposition \ref{prop_nefmovrel}. As observed in \cite{horing2024relative}, the proof of \cite[Theorem 6.1]{li2023relative} shows that the weak cone conjecture \ref{conj_weak} for $\clMov^+(X_\eta)$ implies the weak cone conjecture for $\clMov^+(X/S)$. Finally, since we also have $\clMov^e(X/S) = \clMov^+(X/S)$ by Proposition \ref{prop_nefmovrel}, it follows from \cite[Lemma 2.18]{horing2024relative} that the relative movable cone conjecture holds for $f : X \to S$. Finally, by \cite[Proposition 4.3]{horing2024relative}, there are only finitely many small $\Q$-factorial modifications of $X$ over $S$ up to isomorphism over the base, and the relative nef cone conjecture for each of them. This completes this proof.
\end{proof}

\printbibliography
\Addresses

\end{document}